\newcommand{\ab}[1]{\boldsymbol{#1}}
\def\bfm#1{\boldsymbol{#1}}
\newcommand{\f}[1]{\mathbf{#1}}
\newcommand{\bb}[1]{\bfm{#1}}
\newcommand{\N}{\mathbb N}
\newcommand{\R}{\mathbb R}
\newcommand{\V}{\mathcal{V}_h}
\newcommand{\VO}{\mathcal{V}_{0h}}
\newcommand{\W}{\mathcal{W}_{0h}}
\DeclareMathOperator{\Span}{span}
\newtheorem{thm}{Theorem}
\newtheorem{lem}{Lemma}
\newtheorem{prop}{Proposition}
\theoremstyle{definition}
\newtheorem{ex}{Example}
\newtheorem{rem}{Remark}
\newproof{pf}{proof}
\definecolor{gold}{rgb}{1,0.7,0}
\definecolor{dred}{rgb}{0.92,0,0}
\definecolor{dgreen}{rgb}{0,0.6,0}
\def\MK{\color{black}}
\def\VV{\color{black}}
\begin{document}

\begin{frontmatter}

\title{Solving the triharmonic equation over multi-patch domains \\ using isogeometric analysis}


\author[lnz]{Mario Kapl\corref{cor}}
\ead{mario.kapl@ricam.oeaw.ac.at}

\author[slo1,slo2]{Vito Vitrih}
\ead{vito.vitrih@upr.si}
 
\address[lnz]{Johann Radon Institute for Computational and Applied Mathematics, \\Austrian Academy of Sciences, Linz, Austria}

\address[slo1]{IAM and FAMNIT, University of Primorska, Koper, Slovenia}

\address[slo2]{Institute of Mathematics, Physics and Mechanics, Ljubljana, Slovenia}

\cortext[cor]{Corresponding author}
   
\begin{abstract}
We present a framework for solving the triharmonic equation over bilinearly parameterized planar multi-patch domains by means of isogeometric analysis. Our approach is based on the 
construction of a globally $C^2$-smooth isogeometric spline space which is used as discretization space. The generated $C^2$-smooth space consists of three different types 
of isogeometric functions called patch, edge and vertex functions. All functions are entirely local with a small support, and {\VV numerical examples indicate that they are 
well-conditioned}. The construction of the functions is simple and works uniformly for all multi-patch configurations. While the patch and edge functions are given by a closed form 
representation, the vertex functions are {\VV obtained by computing the null space of a small system of linear equations}. Several examples demonstrate the potential of our approach 
for solving the triharmonic equation. 
\end{abstract}

\begin{keyword}
isogeometric analysis, triharmonic equation, geometric continuity, $C^2$-continuity,
 multi-patch domain
\MSC 65D17 \sep 65N30 \sep 68U07
\end{keyword}

\end{frontmatter}

\section{Introduction}

In isogeometric analysis (IGA), which was introduced by Hughes et al. \cite{HuCoBa04}, standard CAD functions for describing the geometry, such as polynomial splines or NURBS, are also 
used {\MK for the numerical simulation} of partial differential equations (PDEs), cf. \cite{ANU:9260759,CottrellBook,HuCoBa04}. IGA provides the possibility to solve 
high order PDEs by using standard Galerkin discretization, see e.g.~\cite{BaDe15, TaDe14}, but  which requires isogeometric spline spaces of  high smoothness. 
In case of $4$-th order PDEs, such as the biharmonic equation \cite{BaDe15, CoSaTa16, KaBuBeJu16, KaViJu15, TaDe14}, the Kirchhoff-Love shell problem \cite{
benson2011large, kiendl-bazilevs-hsu-wuechner-bletzinger-10,kiendl-bletzinger-linhard-09, KiHsWuRe15, NgZhZh17}, 
or the Cahn-Hilliard equation \cite{gomez2008isogeometric,  LiDeEvBoHu13}, $C^{1}$-smooth isogeometric functions are needed. 
{\VV Furthermore, $C^1$-smooth isogeometric functions are also needed for plane problems of first strain gradient elasticity \cite{gradientElast2011, KhakaloNiiranenC1} and for a 
locking-free reformulation of Reissner-Mindlin plates \cite{ReissnerMindlin2015}.}
In order to solve $6$-th order PDEs, such as the triharmonic equation \cite{BaDe15, KaVi17b, KaVi17a, TaDe14}, the phase-field crystal 
equation \cite{BaDe15, Gomez2012}, the Kirchhoff plate model based on the Mindlin's gradient elasticity theory {\VV \cite{KhakaloNiiranenC2, Niiranen2016}}, or the 
gradient-enhanced continuum damage model \cite{GradientDamageModels}, even $C^{2}$-smooth functions are required. In particular for the case of $6$-th order PDEs, 
these problems have been mainly considered so far for single-patch domains or simple closed surfaces, where the required smoothness of an isogeometric functions is directly obtained 
by the smoothness of the underlying spline space. In case of multi-patch domains, the construction of
$C^{s}$-smooth ($s\geq 1$) isogeometric spline spaces defined on multi-patch domains is linked to the concept of geometric continuity of multi-patch surfaces (cf. \cite{HoLa93, Pe02}). 
More precisely, an isogeometric function is $C^{s}$-smooth on a multi-patch domain if and only if its graph surface over the multi-patch 
domain is $G^{s}$-smooth (cf. \cite{Pe15, KaViJu15}). The design  of $C^s$-smooth isogeometric spline spaces {\MK over multi-patch domains} is the task of recent research, see e.g. 
\cite{BeMa14, BlMoVi17, CoSaTa16, KaBuBeJu16, KaSaTa17a, KaSaTa17c, KaViJu15, Pe15-2, mourrain2015geometrically, NgPe16, ToSpHu17b,ToSpHu17} for $s=1$ and 
e.g. \cite{KaVi17a, KaVi17b, KaVi17c, ToSpHiHu16} for $s=2$. 

This work focuses on solving the triharmonic equation over bilinearly parameterized planar multi-patch domains by using IGA.  
To our knowledge this problem was handled for the first time in \cite{KaVi17b, KaVi17a}. There, a basis of the entire space of $C^2$-smooth isogeometric functions
is generated. The construction is based on the concept of minimal determining sets (cf.~\cite{LaSch07}) for the involved spline coefficients and requires the symbolic 
{\MK computation of the null space} of a large (global) 
system of linear equations. Further disadvantages of this approach are the following: The resulting functions which are defined across the common interfaces possess in general 
large supports along one or more interfaces. The method is restricted to isogeometric spline functions of bidegree~$(p,p)$ with $p=5,6$ and regularity $r=2$ within the 
single patches. Moreover, the presented examples of solving the triharmonic equation were restricted to one particular level of $h$-refinement. 

{\MK Two further constructions of $C^2$-smooth spline functions over multi-patch domains are \cite{KaVi17c, ToSpHiHu16}, but both methods have not been applied so far to solve $6$th 
order {\VV PDEs}. In~\cite{KaVi17c}, $C^2$-smooth spline spaces over the class of so-called bilinear-like two-patch parameterizations, which contains the subclass of bilinear two-patch 
geometries, were considered. There, the dimension of this space was analyzed and an explicit basis construction was presented, which will serve as a basis for our construction in 
the multi-patch case. In~\cite{ToSpHiHu16}, a polar spline framework is developed to construct $C^2$-smooth isogeometric spaces which is based on a special construction in the 
vicinity of the polar point to ensure $C^2$-smoothness also there.  

Beside multi-patch {\VV quadrangular} domains, triangulations have been used to generate $C^2$-smooth (or even smoother) spline spaces over complex domains. The book~\cite{LaSch07} 
gives an overview of different techniques to model such smooth spline spaces, and provides a detailed bibliography on this topic. There, also the concept of minimal determining sets 
is recalled, which is a common strategy to generate a basis of a smooth spline space over a given triangulation. The minimal determining set implicitly describes a basis of the null 
space of the homogeneous linear system obtained by the corresponding smoothness conditions. We will use this concept for the construction of those basis functions which will be defined 
in the neighborhood of a vertex of the multi-patch domain.} {\MK Some more recent constructions of $C^2$-smooth spline spaces on triangulations are e.g. 
\cite{DavydovYeo2013, Groselj2016, LycheMuntingh2014, Speleers2012, Speleers2013}.}

The present paper improves and extends the approach~\cite{KaVi17b, KaVi17a} in several directions. Instead of constructing the entire space of $C^2$-smooth isogeometric functions, 
which has a complex structure, a simpler subspace~$\W$ is generated. The subspace~$\W$ maintains the full approximation properties of the entire space and is defined as the direct 
sum of spaces corresponding to the single patches, edges and vertices. For each of these spaces the construction of the basis functions is simple and leads to basis functions which 
possess small supports {\MK and} can be described by explicit formulae {\VV or by computing the null space of a small system of linear equations.} {\MK Furthermore, the numerical 
examples indicate that the generated basis functions are well-conditioned}. The basis construction 
of the single spaces is based on and extends the explicit {\MK construction in~\cite{KaVi17c}, 
and can be applied} for any degree $p \geq 5$ and any regularity $2\leq r \leq p-3$ {\MK at the inner knots} 
within the single patches. {\MK Moreover,}
the construction of the space~$\W$ works uniformly for all possible multi-patch configurations. In contrast to 
\cite{KaVi17b, KaVi17a}, the triharmonic equation is solved on several bilinearly parameterized multi-patch domains for different levels of $h$-refinement, where the numerical 
results show the potential of our approach. 

The remainder of the paper is organized as follows. Section~\ref{sec:modelProblem} introduces the model problem which is studied in this work, i.e., 
solving the triharmonic equation over bilinear multi-patch domains by means of IGA. This requires the use of a discretization space consisting of globally $C^2$-smooth 
isogeometric functions. Section~\ref{sec:C2smoothspaces} recalls the concept of $C^2$-smooth isogeometric spline spaces and summarizes the explicit construction~\cite{KaVi17c} for 
the case of two patches which serves as a basis for the multi-patch case. In Section~\ref{sec:C2smoothtestfunctions}, we describe the construction of the discretization space for 
solving the triharmonic equation. This space is a subspace of the entire
space of globally $C^2$-smooth isogeometric spline spaces and is defined as the direct sum of subspaces of three different types
called patch, edge and vertex subspaces. The potential of our method for solving the triharmonic equation is demonstrated on the basis of several examples in 
Section~\ref{sec:triharmonic_examples}, where amongst others the convergence rates and condition numbers obtained under $h$-refinement are numerically studied. 
Finally, we conclude the paper.

\section{The model problem}  \label{sec:modelProblem}

We introduce the model problem which will be considered throughout the paper. The goal is to solve a particular sixth-order partial differential equation, 
namely the triharmonic equation with homogeneous boundary conditions of order 2. 

\subsection{The triharmonic equation}

Let $\Omega = \cup_{\ell=1}^P \Omega^{(\ell)}$ be a planar multi-patch domain. We have to find the function $u : \Omega  \rightarrow \R$ which solves 
 for $f \in H^{0}(\Omega)$ the equation 
\begin{equation} \label{eq:triharmonic_problem}
 \triangle^{3} u(\ab{x})  = - f(\ab{x}), \quad  \ab{x} \in \Omega,
 \end{equation}
 with the boundary conditions
\begin{equation} \label{eq:triharmonic_problem_boundary}
 u(\ab{x}) = \frac{\partial u}{\partial \ab{n}}(\ab{x}) = \triangle u(\ab{x})  = 0 , \quad \ab{x} \in \partial \Omega.
 \end{equation}
Using the weak formulation of \eqref{eq:triharmonic_problem} and \eqref{eq:triharmonic_problem_boundary} we have to find $u \in \mathcal{V}_{0}$, with 
\[
 \mathcal{V}_{0} = \{ v \in H^2(\Omega) : \triangle v \in H^1(\Omega) \mbox{ and }  v(\ab{x}) = 
\frac{\partial v}{\partial \ab{n}}(\ab{x}) = \triangle v(\ab{x})  = 0 \mbox{ for }\ab{x} \in \partial \Omega \},
\]
such that
 \begin{equation} \label{eq:weak_triharmonic}
   \int_{\Omega} \nabla\left(\triangle u(\ab{x})\right) \cdot \nabla \left( \triangle v(\ab{x}) \right) \mathrm{d}\ab{x} = \int_{\Omega} f(\ab{x}) v(\ab{x}) \mathrm{d}\ab{x},
 \end{equation}
 where $\cdot$ denotes the standard inner product, is satisfied  for all $v \in \mathcal{V}_{0}$, cf. \cite{BaDe15, TaDe14}. 
 In order to discretize problem~\eqref{eq:weak_triharmonic} by applying Galerkin projection, a finite dimensional function space $\W \subseteq \mathcal{V}_{0}$ is 
 required. 
Assume that we have such a space $\W$ with a basis $\{w_{i}\}_{i \in I_h}$, where $I_h= \{1, 2, \ldots, \dim \W \}$.  
Then, we have to find 
\begin{equation*} 
 u_{h}(\ab{x})=\sum_{i \in I_h}
 c_{i} w_{i}(\ab{x}),  \quad  c_{i} \in  \R,
\end{equation*}
which solves the system of equations
\begin{equation*} 
  \int_{\Omega}  \nabla\left(\triangle u_{h}(\ab{x}) \right) \cdot \nabla \left( \triangle v_{h}(\ab{x})\right) \mathrm{d}\ab{x} = \int_{\Omega} f(\ab{x}) v_{h}(\ab{x}) 
  \mathrm{d}\ab{x}
 \end{equation*}
for all $v_{h} \in  \W$. This results in a system of linear equations 
\[
S\ab{c}=\ab{f}
\]for the unknown coefficients $\ab{c} = (c_{i})_{i \in I_h}$, 
where the elements of the matrix $S = (s_{i,j} )_{i,j \in I_h}$ and the elements of the right-hand side vector $\ab{f} = (f_i)_{i \in I_h}$ are given by
\begin{equation} \label{eq:sij} 
 s_{i,j}=\int_{\Omega} \nabla\left(\triangle w_{i}(\ab{x}) \right) \cdot \nabla \left(\triangle w_{j}(\ab{x}) \right) \mathrm{d}\ab{x} 
 \quad {\rm  and} \quad  f_{i}= \int_{\Omega} f(\ab{x}) w_{i}(\ab{x}) 
 \;\mathrm{d}\ab{x}.
 \end{equation}

In this work, we will follow the isogeometric approach to solve the triharmonic equation. For this purpose, we will construct an isogeometric 
space $\W \subseteq \mathcal{V}_{0}$ and an associated basis $\{w_{i}\}_{i \in I_h}$, see Section~\ref{sec:C2smoothtestfunctions}. 
Beside the fulfillment of the homogeneous boundary conditions~\eqref{eq:triharmonic_problem_boundary}, the generated basis functions~$w_i$ will be $C^2$-smooth, since 
$C^1$-smoothness is not enough to ensure that $w_{i} \in \mathcal{V}_{0}$. 

\subsection{Using the isogeometric approach}

We describe the isogeometric approach to compute the elements in \eqref{eq:sij}. 
We assume that the planar multi-patch domain $\Omega$ 
consists of 
\begin{itemize}
 \item $P$ patches~$\Omega^{(\ell)}$, $\ell=1,2,\ldots, P$, with $P \in \N$ and $P \geq 2$,
 \item $E$ non-boundary common edges $\Gamma^{(s)}$, $s=1,2,\ldots,E$, and
 \item $V$ inner and boundary vertices $\bfm{v}^{(\rho)}$ of valency $\bar{\nu}_{\rho} \geq 3$, $\rho=1,2,\ldots,V$. 
 \footnote{In this work, a boundary vertex of valency two is not considered as a vertex $\ab{v}^{(\rho)}$.}{}
\end{itemize}
In addition, we assume that 
\begin{itemize}
\item the deletion of any vertex does not split $\Omega$ into subdomains, whose union would be unconnected, 
\item {\VV all subdomains $\Omega^{(\ell)}$ are strictly convex quadrangular patches, whose interiors are mutually disjoint,} 
\item any two patches~$\Omega^{(\ell)}$ and $\Omega^{(\ell')}$ have either an empty intersection, possess exactly one common vertex or share the whole common edge, and 
\item each patch $\Omega^{(\ell)}$ is parameterized by a bilinear, bijective and regular geometry
mapping~$\ab{F}^{(\ell)}$,
\begin{align*}
 \ab{F}^{(\ell)}: [0,1]^{2}  \rightarrow \R^{2}, \quad 
 \bb{\xi}^{(\ell)} =(\xi^{(\ell)}_1,\xi^{(\ell)}_2) \mapsto
  (F^{(\ell)}_1,F^{(\ell)}_2)=
 \ab{F}^{(\ell)}(\bb{\xi}^{(\ell)}), \quad \ell \in \{1, 2,\ldots,P\},
\end{align*}
such that $\Omega^{(\ell)} = \ab{F}^{(\ell)}([0,1]^{2})$, see Fig.~\ref{fig:geommetryToOmega}. 
\end{itemize} 

\begin{figure}[htp] 
\centering
\includegraphics[width=12cm]{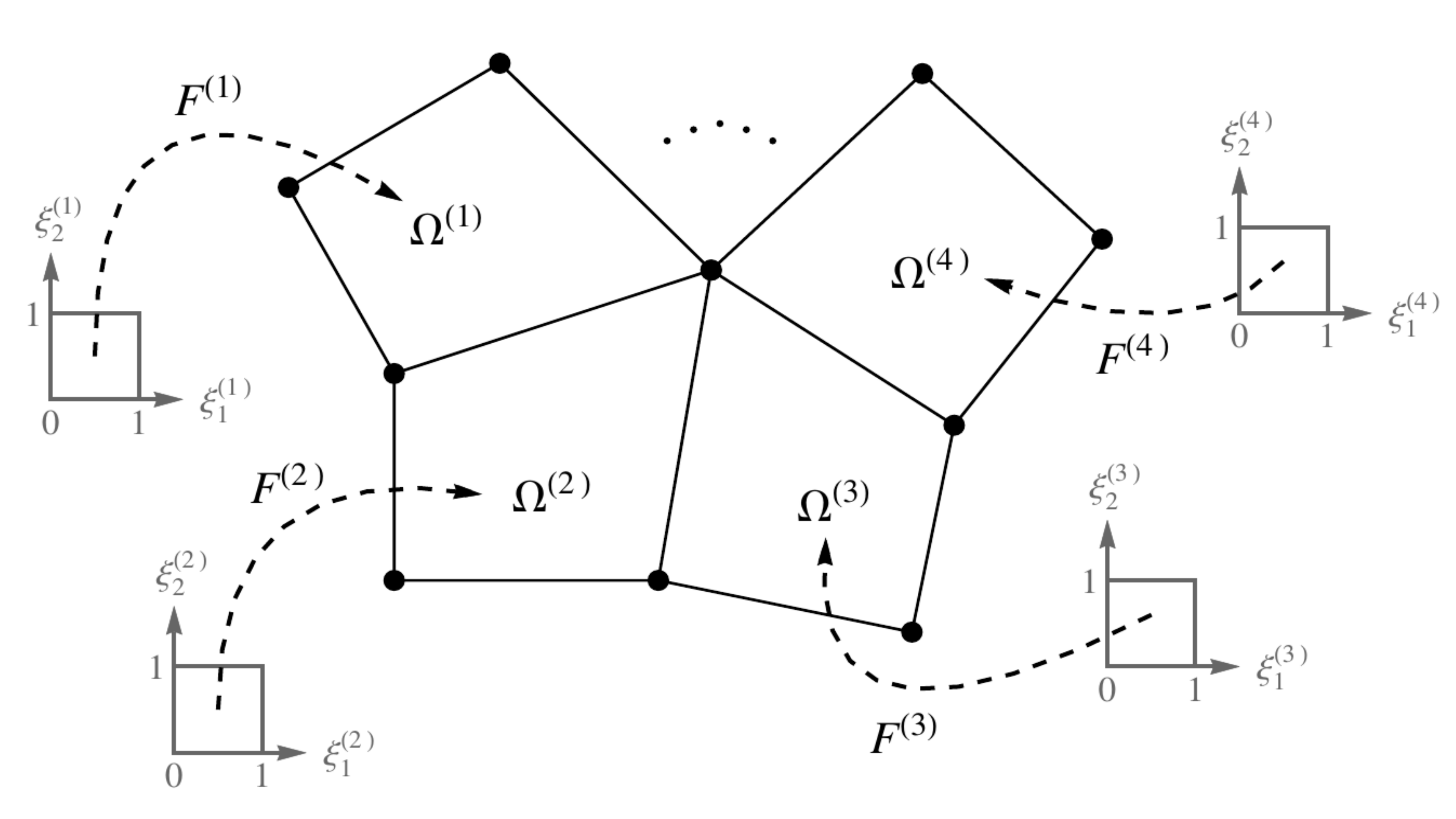}
\caption{The multi-patch domain $\Omega= \cup_{\ell=1}^{P} \Omega^{(\ell)}$ with the corresponding geometry mappings $\bfm{F}^{(\ell)}$, $\ell=1,2,\ldots,P$. } 
\label{fig:geommetryToOmega}
\end{figure}

Let $J^{(\ell)}$ be the Jacobian of $\ab{F}^{(\ell)}$ and let \begin{equation*}
{\VV K^{(\ell)}(\bb{\xi}^{(\ell)})} = \left(J^{(\ell)}(\bb{\xi}^{(\ell)})\right)^{-T}  \left(J^{(\ell)}(\bb{\xi}^{(\ell)})\right)^{-1} |\det J^{(\ell)}(\bb{\xi}^{(\ell)})|.
\end{equation*}
{Furthermore, let $W^{(\ell)}_i = w_i $} $\circ \ab{F}^{(\ell)}$, $i \in I_{h}$. 
Then, we compute the elements in \eqref{eq:sij} patch-wise by 
\begin{equation*}
  s_{i,j} = \sum_{\ell=1}^P s^{(\ell)}_{i,j} \;  \mbox{ and } \quad f_{i} = \sum_{\ell=1}^P f^{(\ell)}_{i},
\end{equation*} 
where 
\begin{align*}  
    s^{(\ell)}_{i,j} = & \int_{[0,1]^{2}} \nabla \left( \frac{1}{|\det J^{(\ell)}(\bb{\xi}^{(\ell)})|}  \,
    \nabla \cdot \left( {\VV K^{(\ell)}(\bb{\xi}^{(\ell)})} \, \nabla W^{(\ell)}_{i}(\bb{\xi}^{(\ell)})\right) \right)
    \cdot \nonumber \\
    & \left(  {\VV K^{(\ell)}(\bb{\xi}^{(\ell)})} \,  \nabla \left( \frac{1}{|\det J^{(\ell)}(\bb{\xi}^{(\ell)})|}  \,
    \nabla \cdot \left( {\VV K^{(\ell)}(\bb{\xi}^{(\ell)})} \,\nabla W^{(\ell)}_{j}(\bb{\xi}^{(\ell)})\right) \right) \right)
  \, \mathrm{d}\bb{\xi}^{(\ell)},
\end{align*}
and
\begin{equation*}
f^{(\ell)}_{i}  = \int_{[0,1]^{2}} f(\ab{F}^{(\ell)}(\bb{\xi}^{(\ell)})) W^{(\ell)}_{i}(\bb{\xi}^{(\ell)}) |\det J^{(\ell)}(\bb{\xi}^{(\ell)})| \; 
\mathrm{d}\bb{\xi}^{(\ell)},
\end{equation*}
cf. \cite{BaDe15, KaVi17a}.

\section{$C^2$-smooth isogeometric spline spaces} \label{sec:C2smoothspaces}

In Section~\ref{sec:C2smoothtestfunctions}, the isogeometric discretization space~$\W$ will be generated as a subspace of the space of $C^2$-smooth isogeometric spline 
functions on $\Omega$. Before, we recall the concept of $C^2$-smooth isogeometric spline spaces, cf.~\cite{KaVi17b, KaVi17c}, and adapt the notations appropriately.

\subsection{The space of $C^2$-smooth isogeometric spline functions}

 In order to define the space of $C^2$-smooth isogeometric spline functions on $\Omega$, we need some additional definitions and notations. 
{\VV Let $p \geq 5$, $k \in \N_{0}$ and for $k\geq 1$ let $2 \leq r \leq p-3$. Moreover let $h=\frac{1}{k+1}$.} 
We denote by $\mathcal{S}_{h}^{p,r}([0,1])$ the univariate spline space on the interval~$[0,1]$ of 
degree~$p$ and regularity $C^r$ possessing the open knot vector  
\begin{equation*}  
(\underbrace{0,0,\ldots,0}_{(p+1)-\mbox{\scriptsize times}},
\underbrace{\textstyle \tau_1,\tau_1,\ldots ,\tau_1}_{(p-r) - \mbox{\scriptsize times}}, 
\underbrace{\textstyle \tau_2,\tau_2,\ldots ,\tau_2}_{(p-r) - \mbox{\scriptsize times}},\ldots, 
\underbrace{\textstyle \tau_k,\tau_k,\ldots ,\tau_k}_{(p-r) - \mbox{\scriptsize times}},
\underbrace{1,1,\ldots,1}_{(p+1)-\mbox{\scriptsize times}}),
\end{equation*}
where the $k$ different inner knots $\tau_j$, $j \in \{1,2,\ldots,  k\}$, are equally distributed, i.e., $\tau_j = \frac{j}{k+1}=jh$. Let $N_{i}^{p,r}$, 
$i=0,1,\ldots,p+k(p-r)$, be the B-splines of the spline space $\mathcal{S}^{p,r}_{h}([0,1])$, and let $\mathcal{S}^{p,r}_{h}([0,1]^2)$ be the bivariate tensor-product spline space on 
the unit-square~$[0,1]^2$ spanned by the B-splines $N^{p,r}_{i,j} = N^{p,r}_{i}N^{p,r}_{j}$, $i,j=0,1,\ldots,p + k (p-r)$. Note that $h$ is the mesh-size of the spline 
spaces~$\mathcal{S}^{p,r}_{h}([0,1])$ and $\mathcal{S}^{p,r}_{h}([0,1]^2)$. {\MK In addition, in case of $k=0$ (i.e. $h=1$), the spaces $\mathcal{S}^{p,r}_{1}([0,1])$ and 
$\mathcal{S}^{p,r}_{1}([0,1]^2)$ are for any $r$ just the corresponding spaces of polynomials of degree~$p$ and bidegree~$(p,p)$, respectively.} 
Below, we assume that the number of inner knots satisfies $k \geq \frac{9-p}{p-r-2}$, which implies $h \leq \frac{p-r-2}{7-r} $.

Recall that the geometry mappings $\ab{F}^{(\ell)}$, $\ell = 1,2,\ldots, P $, are bilinear parameterizations, which also implies that $\ab{F}^{(\ell)} \in 
\mathcal{S}_{h}^{p,r}([0,1]^2) \times \mathcal{S}^{p,r}_{h}([0,1]^2)$. Then, the space of globally $C^{2}$-smooth isogeometric spline functions on $\Omega$ 
(with respect to the spline space~$\mathcal{S}_{h}^{p,r}([0,1]^2)$) is  defined as 
\begin{equation*}
\V = \left\{ \phi \in C^{2}(\Omega) : \; \phi |_{\Omega^{(\ell)}} \in {\mathcal{S}_{ h}^{p,r}([0,1]^{2})} \circ (\ab{F}^{(\ell)})^{-1}, \; \ell \in \{1,2, \ldots, P\}   \right\}.
\end{equation*}
The graph surface $\ab{\Sigma}:[0,1]^2 \to \Omega \times \R$  of an isogeometric function $\phi \in \V$ is given patch-wise by the graph surface patches 
$$
\ab{\Sigma}^{(\ell)}(\bfm{\xi}^{(\ell)}) = \left(\ab{F}^{(\ell)}(\bfm{\xi}^{(\ell)}), g^{(\ell)}(\bfm{\xi}^{(\ell)}) \right)^T, \quad g^{(\ell)} \in {\mathcal{S}_{ h}^{p,r}([0,1]^2)}, 
 \quad \ell=1,2,\ldots,P, 
$$
with
\begin{equation}  \label{eq:g_ell}
g^{(\ell)}(\bfm{\xi}^{(\ell)})= \phi \circ \ab{F}^{(\ell)}(\bfm{\xi}^{(\ell)})  = 
\sum_{i=0}^{p+k(p-r)} \sum_{j=0}^{p+k(p-r)} d^{(\ell)}_{i,j} N_{i,j}^{p,r} (\bfm{\xi}^{(\ell)}), \quad d^{(\ell)}_{i,j} \in \R.
\end{equation}

\begin{figure}[htp] 
\centering
\includegraphics[width=9cm]{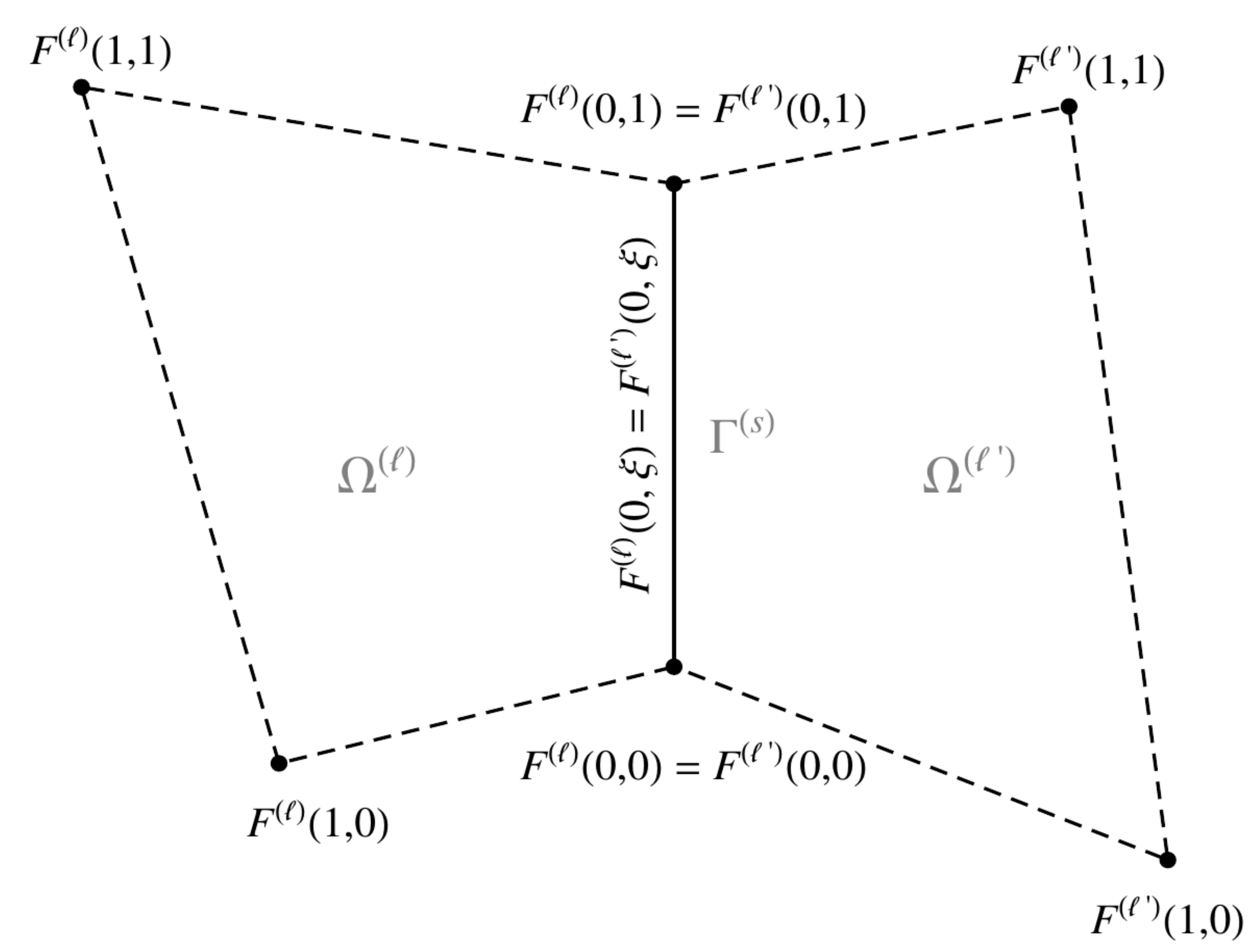}
\caption{Considering two neighboring patches~$\Omega^{(\ell)}$ and $\Omega^{(\ell)}$, we can always assume (without loss of generality) that the two corresponding geometry mappings 
$\ab{F}^{(\ell)}$ and $\ab{F}^{(\ell')}$ are parameterized as shown.} \label{fig:situation_two_patches}
\end{figure}

The functions in $\V$ can be characterized by using the concept of geometric continuity (cf. \cite{Pe15, KaViJu15}): 
\emph{An isogeometric function $\phi$ belongs to the space $\V$ if and only if for all neighboring patches $\Omega^{(\ell)}$ and $\Omega^{(\ell')}$ sharing an 
interface~$\Gamma^{(s)} = \Omega^{(\ell)} \cap \Omega^{(\ell')}$ (where $s \in \{1,2,\ldots,E\}$), the two graph surface patches 
$\ab{\Sigma}^{(\ell)}$ and $\ab{\Sigma}^{(\ell')}$ meet at the common interface $\Gamma^{(s)}$
with $G^2$ continuity.} Since the geometry mappings $\ab{F}^{(\ell)}$ and $\ab{F}^{(\ell')}$ are given in advance, the $G^2$ continuity conditions for the graph surface 
patches~$\ab{\Sigma}^{(\ell)}$ and $\ab{\Sigma}^{(\ell')}$ lead to conditions for the spline functions $g^{(\ell)}$ and $g^{(\ell')}$, which  
determine again linear constraints on the spline coefficients $d_{i,j}^{(\ell)}$ and $d_{i,j}^{(\ell')}$. 
These conditions were studied in \cite{KaVi17c} for the class of so-called bilinear-like $G^2$ geometries, which includes the class of bilinearly parameterized 
geometries.  Let us shortly recall the conditions for the two neighboring patches~$\Omega^{(\ell)}$ and $\Omega^{(\ell')}$. For the sake of simplicity, 
we can always reparameterize (if needed) the two geometry mappings $\ab{F}^{(\ell)}$ and $\ab{F}^{(\ell')}$ to have the situation as given in 
Fig.~\ref{fig:situation_two_patches},
i.e., 
$$
\VV
\ab{F}^{(\ell)}(0,{\xi_2})=
\ab{F}^{(\ell')}(0,{\xi_2}),\quad \xi_2 =\xi_2^{(\ell)}=\xi_2^{(\ell')}\in[0,1].
$$ 
To simplify the notation, let us denote the common interface $\Gamma^{(s)}$ in this section by $\Gamma$ and let
\begin{equation*}  \label{eq:alphaLRbar}
 \bar{\alpha}_{\Gamma}^{(\tau)}(\xi) = \det [D_{\xi_1^{(\tau)}}\ab{F}^{(\tau)}(0,\xi), D_{\xi}\ab{F}^{(\tau)}(0,\xi)] , \;
  \alpha^{(\tau)}_\Gamma(\xi) = \gamma_1(\xi) \bar{\alpha}^{(\tau)}_\Gamma(\xi),\; \widehat{\alpha}^{(\tau)}_\Gamma(\xi) = \gamma_2(\xi) \bar{\alpha}_\Gamma^{(\tau)}(\xi),
\end{equation*}
for $\tau\in \{\ell,\ell'\}$ and
\[
\bar{\beta}_\Gamma(\xi) = \det[ D_{\xi_1^{(\ell)}}\ab{F}^{(\ell)}(0,\xi) , D_{\xi_1^{(\ell')}}\ab{F}^{(\ell')}(0,\xi) ], \quad
 \beta_\Gamma(\xi) = \gamma_1(\xi) \bar{\beta}_\Gamma (\xi)
\]
for $\gamma_i: [0,1]\to \R,\; i=1,2.$ Note that $\bar{\alpha}^{(\ell)}_{\Gamma}$ and $\bar{\alpha}^{(\ell')}_{\Gamma}$ are linear polynomials with $\bar{\alpha}^{(\ell)}_{\Gamma} < 0$ 
and $\bar{\alpha}^{(\ell')}_{\Gamma} > 0$, respectively, and $\bar{\beta}_{\Gamma}$ is a quadratic polynomial.
We can write the function $\beta_\Gamma$ also as 
\begin{equation*} \label{eq:beta}
 \beta_\Gamma(\xi) = \alpha_\Gamma^{(\ell)}(\xi) \beta_\Gamma^{(\ell')}(\xi) - \alpha_\Gamma^{(\ell')}(\xi) \beta_\Gamma^{(\ell)}(\xi),
\end{equation*}
where $\beta_\Gamma^{(\ell)}$, $\beta_\Gamma^{(\ell')}: [0,1] \rightarrow \R$ are given as
\begin{equation*} \label{eq:betaS}
 \beta_\Gamma^{(\tau)}(\xi) = \frac{D_{\xi_1^{(\tau)}} \ab{F}^{(\tau)}(0,\xi) \cdot D_{\xi}\ab{F}^{(\tau)}(0,\xi)}{||D_{\xi}\ab{F}^{(\tau)}(0,\xi)||^{2}}, \quad \tau \in \{\ell,\ell'\}.
\end{equation*}
Moreover let
\begin{equation*} \label{eq:eta}
 \eta_\Gamma(\xi) = 2 \gamma_2(\xi) ( \alpha_\Gamma^{(\ell)})'(\xi) \alpha_\Gamma^{(\ell')}(\xi) \beta_\Gamma(\xi),
\end{equation*}
\begin{equation*} \label{eq:theta}
 \theta_\Gamma(\xi)= 2 \gamma_2(\xi) \left(\alpha_\Gamma^{(\ell)}(\xi)  (\beta_\Gamma^{(\ell)})'(\xi) - (\alpha_\Gamma^{(\ell)})'(\xi) \beta_\Gamma^{(\ell)}(\xi)\right) \alpha_\Gamma^{(\ell')}(\xi) \beta_\Gamma(\xi).
\end{equation*}
Then, we have: 
\emph{$\phi \in \V$ if and only if
\begin{equation} \label{eq: C0}
 g^{(\ell)}(0,\xi) = g^{(\ell')}(0,\xi),
\end{equation}
\begin{equation} \label{eq: C1}
 \alpha_\Gamma^{(\ell')}(\xi) D_{\xi_1^{(\ell)}} g^{(\ell)}(0,\xi) - \alpha_\Gamma^{(\ell)}(\xi) D_{\xi_1^{(\ell')}} g^{(\ell')}(0,\xi) + \beta_\Gamma(\xi) D_{\xi} g^{(\ell)}(0,\xi) = 0,
\end{equation}
and
\begin{equation} \label{eq: C2}
 \widehat{\alpha}_\Gamma^{(\ell)}(\xi) w_\Gamma(\xi) + \eta_\Gamma(\xi) D_{\xi_1^{(\ell)}} g^{(\ell)}(0,\xi) + \theta_\Gamma(\xi) D_{\xi}g^{(\ell)}(0,\xi) = 0,
\end{equation}
where
\begin{align*}
 w_\Gamma(\xi) =& \;( \alpha_\Gamma^{(\ell)}(\xi))^{2} D_{\xi_1^{(\ell')}\xi_1^{(\ell')}} g^{(\ell')}(0,\xi) - ( (\alpha_\Gamma^{(\ell')}(\xi))^{2} D_{\xi_1^{(\ell)} \xi_1^{(\ell)}} g^{(\ell)}(0,\xi) \nonumber\\[-0.3cm]
 & \label{eq:w} \\[-0.3cm]
 & + 2 \alpha_\Gamma^{(\ell')}(\xi) \beta_\Gamma(\xi) D_{\xi_1^{(\ell)} \xi} g^{(\ell)}(0,\xi) + (\beta_\Gamma(\xi))^{2}  D_{\xi \xi} g^{(\ell)}(0,\xi) ) . \nonumber
\end{align*}
}

Note that  condition~\eqref{eq: C0} guarantees that $\phi$ is $C^{0}$-smooth, condition~\eqref{eq: C1} additionally ensures that $\phi$ is $C^{1}$-smooth, and 
condition~\eqref{eq: C2} finally implies that $\phi$ is $C^{2}$-smooth. 

\begin{rem}
Below, we choose $\gamma_2(\xi) = 1$, implying $\widehat{\alpha}^{(\tau)} = \bar{\alpha}^{(\tau)} $, $\tau \in \{\ell, \ell'\}$. 
In addition, we select $\gamma_1(\xi) = c_1 \in \R$ 
such that 
\[
 || \alpha_\Gamma^{(\ell)}+1 ||^2_{L^2} + || \alpha_\Gamma^{(\ell')}-1 ||^2_{L^2}
\]
is minimized, cf. \cite{KaSaTa17c}. 
\end{rem}

\subsection{The two-patch case} \label{subsec:two-patch-case}

In this subsection we restrict ourselves to the two-patch case~$\Omega = \Omega^{(\ell)} \cup \Omega^{(\ell')}$ for two neighboring patches~$\Omega^{(\ell)}$ and $\Omega^{(\ell')}$ 
having the common interface~$\Gamma^{(s)} = \Omega^{(\ell)} \cap \Omega^{(\ell')}$. Without loss of generality, we can assume that the two geometry mappings~$\ab{F}^{(\ell)}$ and 
$\ab{F}^{(\ell')}$ are parameterized as in Fig.~\ref{fig:situation_two_patches}. We recall now the construction of  a $C^2$-smooth 
isogeometric spline space $\widetilde{\mathcal{W}}_h \subseteq \V$, which was described in \cite{KaVi17c}, by using now adapted notations. The subspace $\widetilde{\mathcal{W}}_h$ 
is advantageous compared to the entire space~$\V$, since 
its basis construction is simpler and works uniformly for all possible configurations. In addition, it was numerically demonstrated in \cite{KaVi17c} that already the 
subspace~$\widetilde{\mathcal{W}}_h$ possesses optimal approximation properties. For a detailed investigation of the spaces~$\widetilde{\mathcal{W}}_h$ and $\V$ we refer to~\cite{KaVi17c}. 

The space~$\widetilde{\mathcal{W}_{h}}$ is the direct sum of three subspaces, i.e., 
$$
\widetilde{\mathcal{W}}_h = \mathcal{\widetilde{W}}_{h;\Omega^{(\ell)}} \oplus \mathcal{\widetilde{W}}_{h;\Omega^{(\ell')}} \oplus \mathcal{\widetilde{W}}_{h; \Gamma^{(s)}}.
$$
The subspaces $\mathcal{\widetilde{W}}_{h;\Omega^{(\ell)}}$ and $\mathcal{\widetilde{W}}_{h;\Omega^{(\ell')}}$
are given by
\begin{align*}
\mathcal{\widetilde{W}}_{h;\Omega^{(\tau)}} &= \Span \{ \widetilde{\phi}_{\Omega^{(\tau)};i,j} |\; i=3,4,\ldots,p+k(p-r),\; j=0,1,\ldots, p+k(p-r)\},\quad \tau \in \{\ell,\ell'\},
\end{align*}
 with the functions
\begin{equation}  \label{eq:PhiOmega}
\widetilde{\phi}_{\Omega^{(\tau)};i,j}(\bfm{x})  = 
\begin{cases}
   (N_{i,j}^{p,r}\circ (\ab{F}^{(\tau)})^{-1})(\bfm{x}) \;
\mbox{ if }\f \, \bfm{x} \in\Omega^{(\tau)},
\\ 0 \quad \mbox{ if }\f \, \bfm{x} \in \Omega \backslash \Omega^{(\tau)}.
\end{cases} 
\end{equation}
 In order to define the subspace~$\mathcal{\widetilde{W}}_{h;\Gamma^{(s)}}$, we need some additional definitions. 
Let
\begin{equation}  \label{eq:Mj}
  M_0(\xi) = \sum_{i=0}^2 N_i^{p,r}(\xi) , \;
  M_1(\xi) = \frac{ h}{p} \left( N_1^{p,r}(\xi) + 2 N_2^{p,r}(\xi)  \right), \;
  M_2(\xi) = \frac{h^2}{p(p-1)} N_2^{p,r}(\xi), 
\end{equation}  
and let 
\begin{equation*}  \label{eq:ni}
 n_0 = \dim\left( \mathcal{S}_{h}^{p,r+2}([0,1])\right),\;
 n_1 = \dim \left( \mathcal{S}_{ h}^{p-d_\alpha,r+1}([0,1])\right),\;
 n_2 = \dim \left( \mathcal{S}_{h}^{p-2d_\alpha,r}([0,1])\right),
\end{equation*}
where $d_\alpha = \max\left( \deg(\alpha_{\Gamma^{(s)}}^{(\ell)}), \deg(\alpha_{\Gamma^{(s)}}^{(\ell')})\right) \in \{0,1\}$.
The space $\widetilde{\mathcal{W}}_{h;\Gamma^{(s)}}$ is given by 
\begin{equation*}  \label{eq:edgeSubspaceTwoPatch}
\widetilde{\mathcal{W}}_{h;\Gamma^{(s)}} = \Span \{ \widetilde{\phi}_{\Gamma^{(s)};i,j} | \; i=0,1,2, \;  j=0,1,\ldots,n_i-1\}, 
\end{equation*}
with the functions 
\begin{equation} 
\widetilde{\phi}_{\Gamma^{(s)};i,j}(\bfm{x})  = 
\begin{cases}
  (g_{\Gamma^{(s)}; i,j}^{(\ell)} \circ (\ab{F}^{(\ell)})^{-1})(\bfm{x}) \;
\mbox{ if }\f \, \bfm{x} \in\Omega^{(\ell)},
\\[0.15cm] 
  (g_{\Gamma^{(s)}; i,j}^{(\ell')} \circ (\ab{F}^{(\ell')})^{-1} )(\bfm{x}) \;
\mbox{ if }\f \, \bfm{x} \in\Omega^{(\ell')},
\end{cases}
 \label{eq:basisFunctionsGenericEdge}
\end{equation}
where
\begin{align}  \label{eq:basisFunctionsGenericG}
\scalebox{0.93}{$ 
g_{\Gamma^{(s)}; 0,j}^{(\tau)} (\bfm{\xi}^{(\tau)})$} & \scalebox{0.93}{$= 
N_j^{p,r+2}(\xi_2^{(\tau)}) M_0(\xi_1^{(\tau)}) + \beta_{\Gamma^{(s)}}^{(\tau)}(\xi_2^{(\tau)}) (N_j^{p,r+2})'(\xi_2^{(\tau)})  M_1(\xi_1^{(\tau)})  $} \nonumber \\
& \; + \left( \beta^{(\tau)}_{\Gamma^{(s)}}(\xi_2^{(\tau)})\right)^2 (N_j^{p,r+2})''(\xi_2^{(\tau)})  M_2(\xi_1^{(\tau)}),\nonumber \\
\scalebox{0.93}{$  
g_{\Gamma^{(s)}; 1,j}^{(\tau)} (\bfm{\xi}^{(\tau)})$} & \scalebox{0.93}{$ = \displaystyle \frac{p}{ h}  \left( {\alpha}^{(\tau)}_{\Gamma^{(s)}}(\xi_2^{(\tau)}) {N}_j^{p-d_{{\alpha}},r+1}(\xi_2^{(\tau)})  M_1(\xi_1^{(\tau)})  \right. $}  \\
& \left. \; + 2\,  {\alpha}^{(\tau)}_{\Gamma^{(s)}} (\xi_2^{(\tau)})\beta^{(\tau)}_{\Gamma^{(s)}}(\xi_2^{(\tau)}) ({N}_j^{p-d_{{\alpha}},r+1})'(\xi_2^{(\tau)})  M_2(\xi_1^{(\tau)})\right), \nonumber \\
\scalebox{0.93}{$
 g_{\Gamma^{(s)}; 2,j}^{(\tau)}  (\bfm{\xi}^{(\tau)})$} & \scalebox{0.93}{$=  \displaystyle  \frac{p(p-1)}{h^2} \left( {\alpha}^{(\tau)}_{\Gamma^{(s)}}(\xi_2^{(\tau)})\right)^2 
{N}_j^{p-2d_{{\alpha}},r}(\xi_2^{(\tau)}) M_2(\xi_1^{(\tau)}) $}  \nonumber,
\end{align}
for $\tau \in \{\ell,\ell'\} $.
\begin{rem}
 The functions in \eqref{eq:basisFunctionsGenericG} are scaled in comparison to the ones in \cite{KaVi17c}.
\end{rem}
The following proposition gives an estimate for the support of the function~$g_{\Gamma^{(s)}; i,j}^{(\tau)}$, and will be needed later. 
\begin{prop}  \label{thm:mainC4}
Let $d= \dim\left( \mathcal{S}_{h}^{p,r}([0,1])\right) = p + k(p-r) +1$. The functions $ g_{\Gamma^{(s)}; i,j}^{(\tau)}$, $j=0,1,\ldots,n_i-1,\, i =0,1,2,\, \tau \in \{\ell,\ell'\},$  
can be represented as 
\begin{equation}  \label{eq:defgljS}
  g_{\Gamma^{(s)}; i,j}^{(\tau)} (\bfm{\xi}^{(\tau)}) =  \sum_{m=0}^2 \; 
   \sum_{n=\max (0,i+j-m)}^{\min(d-1,d-n_i+j-i+m) } d^{(\tau)}_{\Gamma^{(s)}; m,n} N_{m,n}^{p,r} (\bfm{\xi}^{(\tau)}), \qquad d^{(\tau)}_{\Gamma^{(s)}; m,n}\in \R.
\end{equation}  
\end{prop}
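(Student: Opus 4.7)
The plan is to establish the representation by analyzing the support of each basis function $g_{\Gamma^{(s)};i,j}^{(\tau)}$ separately in the $\xi_1$- and $\xi_2$-directions, exploiting the tensor-product structure of $\{N_{m,n}^{p,r}\}$ and the closed form~\eqref{eq:basisFunctionsGenericG}.

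For the $\xi_1$-direction, a direct inspection of~\eqref{eq:Mj} yields
\[
M_0 \in \Span\{N_0^{p,r}, N_1^{p,r}, N_2^{p,r}\}, \quad
M_1 \in \Span\{N_1^{p,r}, N_2^{p,r}\}, \quad
M_2 \in \Span\{N_2^{p,r}\},
\]
so the outer index $m$ in the claimed expansion takes values only in $\{0,1,2\}$, accounting for the outer sum. Moreover, for each fixed $i$ only the factors $M_i, M_{i+1}, \ldots, M_2$ appear as $\xi_1$-factors, so that the additional restriction $m \geq i$ effectively holds (with the coefficients for the remaining $m$ being zero). For the $\xi_2$-direction, each coefficient of $M_m(\xi_1^{(\tau)})$ in~\eqref{eq:basisFunctionsGenericG} is a product of a polynomial of degree at most $2 d_\alpha$ in $\alpha$ and $\beta$ (which, for bilinear geometries, have degree at most $1$) with a B-spline, or a derivative thereof, from one of the coarser spline spaces $\mathcal{S}_h^{p,r+2}$, $\mathcal{S}_h^{p-d_\alpha,r+1}$, or $\mathcal{S}_h^{p-2d_\alpha,r}$. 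A direct degree and regularity count shows that each such product still lies in $\mathcal{S}_h^{p,r}([0,1])$, so it admits an expansion in the basis $\{N_n^{p,r}\}_{n=0}^{d-1}$.

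The main obstacle is the careful bookkeeping of the index range for $n$. Proceeding by a case analysis on $i \in \{0,1,2\}$, one tracks: (i) the offset between the index $j$ in the coarser space and the corresponding indices in the finer basis $\{N_n^{p,r}\}$; (ii) the widening of support induced by the multiplication by $\alpha^c \beta^{c'}$, which scales with $i$ and is responsible for the term $i+j$ in the lower bound and the compensating slack $+m$ in both bounds; and (iii) the boundary saturations $\max(0,\cdot)$ and $\min(d-1,\cdot)$ coming from the open-knot structure. The dimension gap $d - n_i$ appearing in the upper bound encodes exactly the reduction in the number of B-splines when passing from $\mathcal{S}_h^{p,r}$ to the coarser space used in case~$i$, caused by either higher regularity ($i = 0$) or lower degree ($i \geq 1$). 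Combining these three ingredients for each $i$ yields the claimed (conservative) support bound on the tensor-product representation.
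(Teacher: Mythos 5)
Your overall structure (tensor--product split; $M_0,M_1,M_2$ confining the $\xi_1$-index to $m\in\{0,1,2\}$; the $\xi_2$-index range as the real work) matches the shape of the paper's argument, but your proposal stops exactly where the proof actually starts. The claimed range for $n$ does not follow from ``tracking offsets.'' Two facts carry the lower bound $n\ge\max(0,i+j-m)$, and you neither state nor prove them: first, when a B-spline $N_j^{p,r+2}$, $N_j^{p-d_\alpha,r+1}$ or $N_j^{p-2d_\alpha,r}$ of the coarser/lower-degree space (or one of its derivatives, which shift the minimal index only by the differentiation order $m-i$) is re-expressed in $\mathcal{S}_{h}^{p,r}([0,1])$, no coefficient with index below $j$ (minus that derivative order) can appear; second, multiplication by the linear factors $\alpha_{\Gamma^{(s)}}^{(\tau)}$, $\beta_{\Gamma^{(s)}}^{(\tau)}$ together with the accompanying degree elevation preserves this minimal index. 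The paper isolates exactly these two statements as Lemma~\ref{lem:knotInsertion} and Lemma~\ref{lem:multiplication} and proves them via blossoming; without them your ``careful bookkeeping'' is an assertion, not a proof. There is also a misattribution: the slack $-m$ in the lower bound comes from the differentiation order $m-i$ attached to $M_m$ in \eqref{eq:basisFunctionsGenericG}, not from a ``widening of support induced by the multiplication by $\alpha^c\beta^{c'}$'' --- polynomial multiplication does not widen supports, and the nontrivial point is precisely that (combined with degree elevation) it does not lower the minimal index either.

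More seriously, reading the conclusion as a ``conservative support bound'' cannot work where the proposition is later used. Near the ends of the open knot vector all boundary B-splines overlap (for instance, $t_j^{p,r+2}=0=t_n^{p,r}$ for all $j\le p$, $n\le p$), so support considerations cannot show that, say, for $i=0$, $j=5$, $m=0$ the coefficients of $N_{0,n}^{p,r}$ with $n\le 4$ vanish; yet these vanishing boundary-adjacent coefficients are exactly what Lemma~\ref{lem:edgespace} and Lemma~\ref{lem:C2smoothvertex} need to conclude vanishing values, gradients and Hessians on $\partial(\Omega^{(\ell)}\cup\Omega^{(\ell')})$. They must come from an argument at the level of control points (blossoms, or equivalently endpoint jets), not supports. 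Finally, you give no argument at all for the upper bound $n\le\min(d-1,d-n_i+j-i+m)$; the paper obtains it from the lower bound essentially for free through the symmetry $\widehat{g}^{(\tau)}_{\Gamma^{(s)};i,j}(\xi_1^{(\tau)},\xi_2^{(\tau)})=g^{(\tau)}_{\Gamma^{(s)};i,n_i-1-j}(\xi_1^{(\tau)},1-\xi_2^{(\tau)})$, a reduction your proposal would still need to supply.
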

\begin{proof}
 See \ref{app:proofProposition}.
\end{proof}
In the next section we will use the $C^2$-smooth isogeometric functions for the two-patch case to construct 
a $C^2$-smooth isogeometric spline space~$\W$ for the multi-patch case.

\section{$C^2$-smooth discretization space $\W$} \label{sec:C2smoothtestfunctions}

A $C^2$-smooth discretization space $\W$ will be constructed which can be used for solving the triharmonic equation \eqref{eq:triharmonic_problem} 
with homogeneous boundary conditions \eqref{eq:triharmonic_problem_boundary}, see Section~\ref{sec:triharmonic_examples}. This space will be a subspace of $\V$ or more precisely of 
the space~$\VO$ given by 
\begin{equation*} 
\VO = \{ \phi \in \V : \;  \phi(\ab{x}) = \frac{\partial \phi}{\partial \ab{n}}(\ab{x}) = \triangle \phi(\ab{x})  = 0 , \quad \ab{x} \in \partial \Omega  \}, 
\end{equation*}
which contains all $C^2$-smooth functions on $\Omega$ fulfilling the homogeneous boundary conditions~\eqref{eq:triharmonic_problem_boundary}.

\subsection{Structure of the space~$\W$}

The discretization space~$\W$ is the direct sum of smaller subspaces corresponding to the single patches~$\Omega^{(\ell)}$, 
edges~$\Gamma^{(s)}$ and vertices $\ab{v}^{(\rho)}$, i.e., 
\begin{equation} \label{eq:Wh}
    \W =  \left(\bigoplus_{\ell=1}^P \mathcal{W}_{0h;\Omega^{(\ell)}}\right) \oplus \left(\bigoplus_{s=1}^E \mathcal{W}_{0h;\Gamma^{(s)}}\right) 
    \oplus \left(\bigoplus_{\rho=1}^V \mathcal{W}_{0h;\bfm{v}^{(\rho)}}\right).
\end{equation}
This decomposition is a common strategy to generate smooth spline spaces, e.g. \cite{KaSaTa17c, KaVi17b}.
The construction of the single subspaces will be presented in the following subsections and will be based on functions from the subspaces 
$\mathcal{\widetilde{W}}_{h;\Omega^{(\ell)}}$ and $\mathcal{\widetilde{W}}_{h;\Gamma^{(s)}}$ for the two-patch case in Section~\ref{subsec:two-patch-case}. 

\subsection{The patch subspace~$\mathcal{W}_{0h;\Omega^{(\ell)}}$}

Let $\ell \in \{1, 2, \ldots, P \}$. We denote by $\phi_{\Omega^{(\ell)};i,j}: \Omega \to \R$, ${\MK i,j=0,1,\ldots,p+k(p-r)}$, the functions 
\begin{equation} \label{eq:defphiOmega}
    \phi_{\Omega^{(\ell)};i,j} (\bfm{x}) = 
    \begin{cases}
 \widetilde{ \phi}_{\Omega^{(\ell)};i,j} (\bfm{x})\;
\mbox{ if }\f \, \bfm{x} \in\Omega^{(\ell)} ,
\\[0.15cm] 
  0\quad \mbox{otherwise},
\end{cases}
\end{equation}
with $\widetilde{\phi}_{\Omega^{(\ell)};i,j}$ given in \eqref{eq:PhiOmega}, 
{\MK and }define the patch subspace $\mathcal{W}_{0h; \Omega^{(\ell)}}$ as
\begin{equation*} \label{eq:spaceW0hOmega}
\mathcal{W}_{0h; \Omega^{(\ell)}} =  \Span \{ \phi_{\Omega^{(\ell)};i,j} |\;  i,j=3,4,\ldots,p+k(p-r)-3  \}.
\end{equation*}
\begin{lem}  \label{lem:patchspace}
We have
\[
\mathcal{W}_{0h; \Omega^{(\ell)}}  \subseteq \VO. 
\] 
\end{lem}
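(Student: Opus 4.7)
The plan is to reduce the claim to two independent sub-claims: (i) the function $\phi_{\Omega^{(\ell)};i,j}$ extended by zero lies in $\V$, i.e., is globally $C^2$-smooth on $\Omega$, and (ii) it satisfies the three homogeneous boundary conditions defining $\VO$. Both will follow once I show that $\widetilde{\phi}_{\Omega^{(\ell)};i,j}$ and all its partial derivatives up to order $2$ vanish on the entire boundary $\partial \Omega^{(\ell)}$ of its patch.

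First I would record the well-known endpoint behaviour of open-knot B-splines: because the knot vector of $\mathcal{S}_h^{p,r}([0,1])$ has multiplicity $p+1$ at each endpoint and the inner knots have multiplicity at most $p-r \le p-2$, only $N_0^{p,r}, N_1^{p,r}, N_2^{p,r}$ are nonzero at $\xi=0$ after differentiation up to order two, and symmetrically for $\xi=1$. Hence for every index $i \in \{3,4,\ldots,p+k(p-r)-3\}$ one has $(N_i^{p,r})^{(a)}(0) = (N_i^{p,r})^{(a)}(1) = 0$ for $a=0,1,2$. Taking tensor products, for $i,j$ in that index range the B-spline $N_{i,j}^{p,r}=N_i^{p,r}N_j^{p,r}$ and all partial derivatives $\partial_{\xi_1}^a \partial_{\xi_2}^b N_{i,j}^{p,r}$ with $a+b \le 2$ vanish on $\partial([0,1]^2)$, since every edge of the unit square lies either in $\{\xi_1 \in \{0,1\}\}$ or $\{\xi_2 \in \{0,1\}\}$.

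Next I would transport this to the physical patch. Since $\ab{F}^{(\ell)}$ is bilinear, bijective and regular, its inverse is smooth in a neighbourhood of $\overline{\Omega^{(\ell)}}$, and the partial derivatives of $\widetilde{\phi}_{\Omega^{(\ell)};i,j} = N_{i,j}^{p,r}\circ (\ab{F}^{(\ell)})^{-1}$ up to order two are, by the chain rule, finite linear combinations of partial derivatives of $N_{i,j}^{p,r}$ of order at most two, multiplied by smooth coefficients depending on $(\ab{F}^{(\ell)})^{-1}$. The vanishing of those parametric derivatives on $\partial([0,1]^2)$ therefore transfers to vanishing of all partial derivatives of $\widetilde{\phi}_{\Omega^{(\ell)};i,j}$ of order at most two on $\partial \Omega^{(\ell)} = \ab{F}^{(\ell)}(\partial([0,1]^2))$. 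Consequently, $\phi_{\Omega^{(\ell)};i,j}$, defined by \eqref{eq:defphiOmega} as the extension by zero to $\Omega\setminus\Omega^{(\ell)}$, matches values and first and second derivatives from both sides on every piece of $\partial\Omega^{(\ell)}$, so the gluing conditions \eqref{eq: C0}--\eqref{eq: C2} on every interface $\Gamma^{(s)} \subset \partial \Omega^{(\ell)}$ are trivially satisfied (both $g^{(\ell)}$ and $g^{(\ell')}$ are identically zero in their respective arguments). This shows $\phi_{\Omega^{(\ell)};i,j} \in \V$.

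Finally, the homogeneous boundary conditions on $\partial\Omega$ are immediate: at points of $\partial\Omega$ that do not lie in $\overline{\Omega^{(\ell)}}$ the function $\phi_{\Omega^{(\ell)};i,j}$ is identically zero in a neighbourhood, while at points in $\partial\Omega \cap \partial\Omega^{(\ell)}$ we have already established that $\phi_{\Omega^{(\ell)};i,j}$, $\nabla \phi_{\Omega^{(\ell)};i,j}$ and the full Hessian (hence in particular $\partial\phi/\partial\ab{n}$ and $\triangle \phi$) vanish. Thus $\phi_{\Omega^{(\ell)};i,j} \in \VO$ and, since $\mathcal{W}_{0h;\Omega^{(\ell)}}$ is spanned by such functions, the lemma follows. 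The main technical step to articulate carefully is the transfer of boundary vanishing under the bilinear map in the chain-rule argument; everything else is essentially a bookkeeping consequence of standard B-spline endpoint behaviour.
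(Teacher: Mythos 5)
Your proposal is correct and follows essentially the same route as the paper's own (much terser) proof: support contained in $\Omega^{(\ell)}$, $C^2$-smoothness there, and vanishing values, gradients and Hessians on $\partial\Omega^{(\ell)}$, which together give membership in $\VO$. You merely spell out the B-spline endpoint and chain-rule details that the paper dismisses as ``clearly'', so no substantive difference or gap.
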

\begin{pf}
By \eqref{eq:defphiOmega}, the functions~$\phi_{\Omega^{(\ell)};i,j}$, $i,j=3,4,\ldots,p+k(p-r)-3$, possess a support
\[
\mbox{supp}(\phi_{\Omega^{(\ell)};i,j}) \subseteq \Omega^{(\ell)},
\]
they are clearly $C^2$-smooth on $\Omega^{(\ell)}$, and have vanishing values, gradients and Hessians on $\partial \Omega^{(\ell)}$. This implies 
that $\phi_{\Omega^{(\ell)};i,j} \in \VO$. \qed
\end{pf}

\subsection{The edge subspace~$\mathcal{W}_{0h;\Gamma^{(s)}}$}

Let $s \in \{1,2, \ldots , E\}$ and let $\ell,\ell' \in \{1,2, \ldots, P\}$, $\ell \neq \ell'$, be the corresponding indices of the two patches such that 
$\Gamma^{(s)} =  \Omega^{(\ell)} \cap \Omega^{(\ell')}$. Without loss of generality, we can assume that the two geometry mappings~$\ab{F}^{(\ell)}$ and $\ab{F}^{(\ell')}$ 
are parameterized as in Fig.~\ref{fig:situation_two_patches}. Otherwise, suitable linear reparameterizations of the two patches can be applied to fulfill this situation.

We denote by 
$\phi_{\Gamma^{(s)};i,j}: \Omega \to \R, \; i=0,1,2,\; j=0,1,\ldots, n_i-1$, the functions
\begin{equation}  \label{eq:defphiGamma}
    \phi_{\Gamma^{(s)};i,j} (\bfm{x}) = 
    \begin{cases}
 \widetilde{ \phi}_{\Gamma^{(s)};i,j} (\bfm{x})\;
\mbox{ if }\f \, \bfm{x} \in\Omega^{(\ell)} \cup \Omega^{(\ell')} ,
\\[0.15cm] 
  0\quad otherwise,
\end{cases}
\end{equation}
with $\widetilde{\phi}_{\Gamma^{(s)};i,j}$ given in \eqref{eq:basisFunctionsGenericEdge}. 
Then, the edge subspace~$\mathcal{W}_{0h;\Gamma^{(s)}}$ is defined as
\begin{equation*} \label{eq:spaceW0hGamma}
\mathcal{W}_{0h;\Gamma^{(s)}} =  \Span \{\phi_{\Gamma^{(s)};i,j}| \;\; j=5-i,6-i,\ldots,n_i+i-6;\; i=0,1,2\}.
\end{equation*} 
\begin{lem}  \label{lem:edgespace}
It holds that 
\[
\mathcal{W}_{0h; \Gamma^{(s)}}  \subseteq  \VO. 
\] 
\end{lem}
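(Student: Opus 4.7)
The plan is to decompose the requirement $\phi_{\Gamma^{(s)};i,j} \in \VO$ into two separate issues: $C^2$-smoothness across every interior edge of $\Omega$, and homogeneous vanishing of value, normal derivative and Laplacian on $\partial\Omega$. The only interface where $\phi_{\Gamma^{(s)};i,j}$ is nontrivial on both sides is $\Gamma^{(s)}$ itself. On every other edge bounding $\Omega^{(\ell)}$ or $\Omega^{(\ell')}$ the function is identically zero on one side (either because the neighbouring patch is outside $\Omega^{(\ell)} \cup \Omega^{(\ell')}$, or because we are on $\partial\Omega$), so both the $C^2$ matching and the boundary conditions collapse to a single statement: $g^{(\tau)}_{\Gamma^{(s)};i,j}$, together with all its first- and second-order partial derivatives, must vanish on the three sides of $[0,1]^2$ other than $\xi_1^{(\tau)} = 0$.

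Across $\Gamma^{(s)}$ the function restricts on $\Omega^{(\ell)} \cup \Omega^{(\ell')}$ to the two-patch function $\widetilde{\phi}_{\Gamma^{(s)};i,j}$ recalled in Section~\ref{subsec:two-patch-case}; by the construction from \cite{KaVi17c} this function satisfies the gluing conditions \eqref{eq: C0}--\eqref{eq: C2}, which is equivalent to $C^2$-continuity across $\Gamma^{(s)}$. For the three remaining sides I would appeal to Proposition~\ref{thm:mainC4}, which expresses $g^{(\tau)}_{\Gamma^{(s)};i,j}$ as a linear combination of tensor-product B-splines $N^{p,r}_{m,n}$ with $m \in \{0,1,2\}$ and $n \in [\max(0,i+j-m),\min(d-1,d-n_i+j-i+m)]$. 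The index range $j \in \{5-i,\ldots,n_i+i-6\}$ prescribed in the definition of $\mathcal{W}_{0h;\Gamma^{(s)}}$ is chosen precisely so that this interval tightens to $n \in [3,d-4]$, the extreme cases being $(j,m)=(5-i,2)$ and $(j,m)=(n_i+i-6,2)$. With $m \leq 2$ and $3 \leq n \leq d-4$, the standard open-knot boundary behaviour of B-splines (using $p\geq 5$ together with the assumed lower bound on $k$, which makes $d$ comfortably large) forces each such $N^{p,r}_{m,n}$ and all of its derivatives up to order two to vanish on the sides $\xi_1^{(\tau)}=1$, $\xi_2^{(\tau)}=0$ and $\xi_2^{(\tau)}=1$. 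Pulling this parametric vanishing through the regular bilinear parameterization $\ab{F}^{(\tau)}$ by the chain rule then yields vanishing of $\phi_{\Gamma^{(s)};i,j}$, its gradient and its Laplacian on the images of these three sides, which gives both $C^2$-smoothness across any adjacent interior edge and the required vanishing on $\partial\Omega$.

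The only part needing genuine care is the index-range arithmetic underlying the step ``the $n$-range tightens to $[3,d-4]$'': verifying, uniformly in $i \in \{0,1,2\}$, that the minimum over admissible $(j,m)$ of the lower endpoint equals exactly $3$ and the maximum of the upper endpoint equals exactly $d-4$. Everything else is bookkeeping on top of Proposition~\ref{thm:mainC4}, the two-patch construction, and a routine chain-rule transfer.
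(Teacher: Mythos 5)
Your proposal is correct and follows essentially the same route as the paper: restrict the support to $\Omega^{(\ell)} \cup \Omega^{(\ell')}$, invoke the two-patch construction of \cite{KaVi17c} for $C^2$-smoothness across $\Gamma^{(s)}$, and use Proposition~\ref{thm:mainC4} to get vanishing values, gradients and Hessians on $\partial(\Omega^{(\ell)} \cup \Omega^{(\ell')})$, which simultaneously yields smoothness of the zero-extension and the homogeneous boundary conditions. The index arithmetic you flag (that $j=5-i,\ldots,n_i+i-6$ forces $n\in[3,d-4]$ for all $m\le 2$) is exactly the detail the paper leaves implicit when it cites Proposition~\ref{thm:mainC4}, and your verification of it is right.
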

\begin{proof}
Let $ i=0,1,2$ and $j=5-i,6-i,\ldots, n_i+i-6$. By \eqref{eq:defphiGamma}, the functions $\phi_{\Gamma^{(s)};i,j}$ possess a support
 \[
  \mbox{supp}(\phi_{\Gamma^{(s)};i,j}) \subseteq \Omega^{(\ell)} \cup \Omega^{(\ell')}. 
 \]
 Furthermore, it was shown in~\cite{KaVi17c}, that the functions $\phi_{\Gamma^{(s)};i,j}$ are $C^2$-smooth on~$\Omega^{(\ell)} \cup \Omega^{(\ell')}$. Since 
 Proposition~\ref{thm:mainC4} ensures that the functions $ \phi_{\Gamma^{(s)};i,j}$ have vanishing values, gradients and Hessians on $\partial (\Omega^{(\ell)} \cup \Omega^{(\ell')} )$,
we obtain $\phi_{\Gamma^{(s)};i,j} \in \VO$.
\end{proof}

\subsection{The vertex subspace $\mathcal{W}_{0h;\bfm{v}^{(\rho)}}$}
We consider an inner or boundary vertex~$\bfm{v}^{(\rho)}$, $\rho \in \{1,2, \ldots, V \}$, possessing the valency $\bar{\nu}_{\rho} \geq 3$. 
We define $\nu_{\rho}$ as
\[
\nu_{\rho} =
\begin{cases}
   \bar{\nu}_{\rho}, \; \mbox{ if }\bfm{v}^{(\rho)}\mbox{ is an inner vertex} ,\\
\bar{\nu}_{\rho}-1, \; \mbox{ if }\bfm{v}^{(\rho)}\mbox{ is a boundary vertex} .
\end{cases}
\]
For the sake of simplicity, we relabel 
the patches containing the vertex $\ab{v}^{(\rho)}$
in counterclockwise order by $\Omega^{(0)}, \Omega^{(1)}, \ldots, \Omega^{(\nu_{\rho}-1)}$. 
Furthermore, we assume without loss of generality that the corresponding geometry mappings $\ab{F}^{(\ell)}$, $\ell=0,1,\ldots,\nu_\rho -1$, are parameterized 
as shown in Fig.~\ref{fig:situation_multi_patches}, 
which assures that 
$$
  \bfm{F}^{(0)}(\bfm{0}) = \bfm{F}^{(1)}(\bfm{0}) = \cdots = \bfm{F}^{(\nu_\rho -1)}(\bfm{0}) = \bfm{v}^{(\rho)}.
$$
\begin{figure}[htp] 
\centering
\includegraphics[width=12cm]{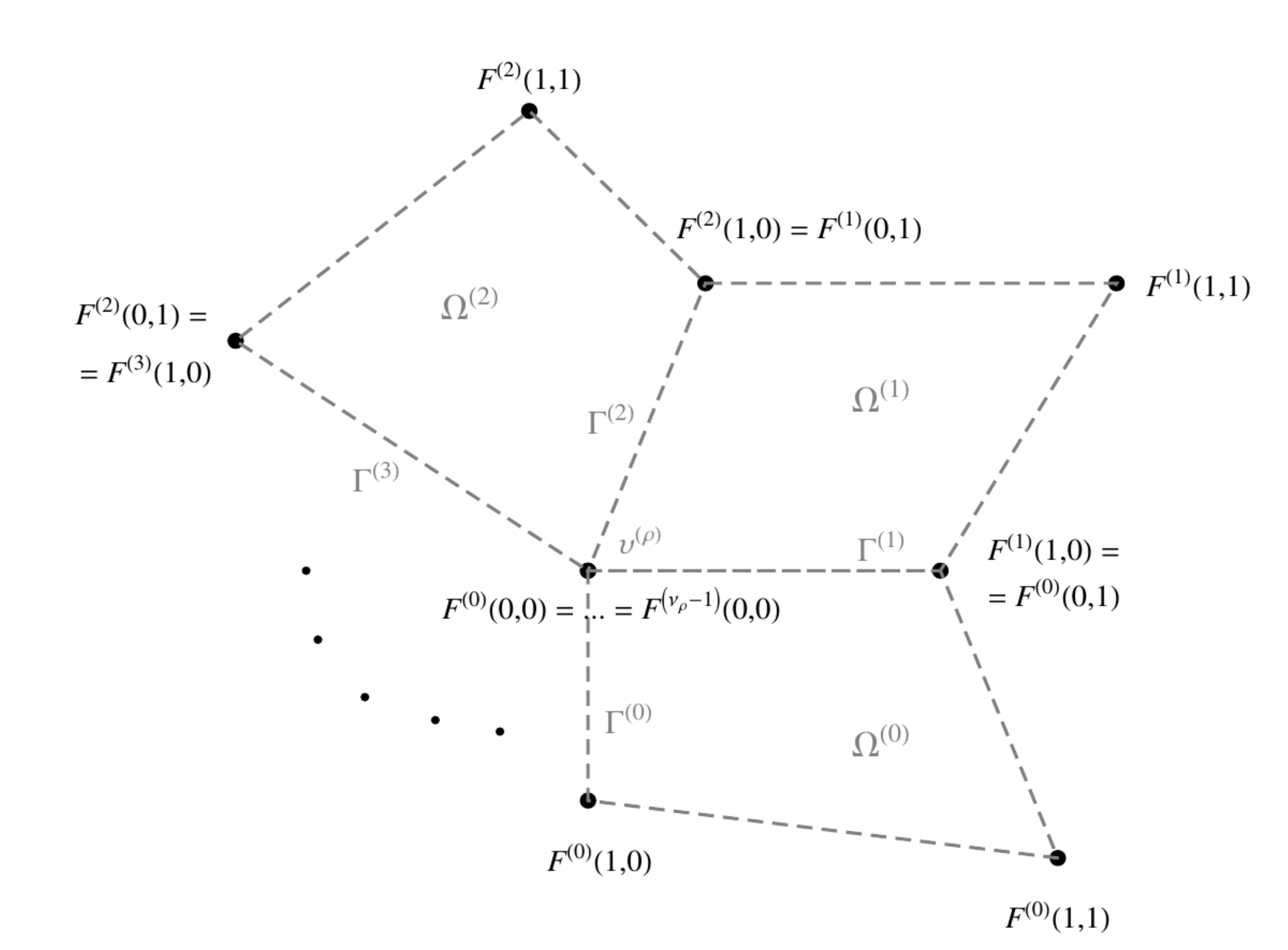}
\caption{The geometry mappings~$\ab{F}^{(\ell)}$ of the patches~$\Omega^{(\ell)}$, $\ell =0, 1,\ldots, \nu_{\rho} -1$, which contain the vertex~$\bfm{v}^{(\rho)}$, can be always 
reparameterized as shown.} 
\label{fig:situation_multi_patches}
\end{figure}
Moreover, we relabel the common interface of every two-patch subdomain
$
\Omega^{(\ell)} \cup \Omega^{(\ell+1)},  \, \ell=0,1,\ldots,\nu_\rho -1, $
by $\Gamma^{(\ell+1)}$. In case of an inner vertex~$\bfm{v}^{(\rho)}$, we consider the upper index~$\ell$ of $\Omega^{(\ell)}$ and $\Gamma^{(\ell)}$ modulo~$\nu_{\rho}$, and in 
case of a boundary vertex~$\bfm{v}^{(\rho)}$, we denote by $\Gamma^{(0)}$ the edge of $\Omega^{(0)}$ corresponding to 
$\ab{F}^{(0)}([0,1]\times \{0\})$, and by $\Gamma^{(\nu_{\rho})}$  the edge of $\Omega^{(\nu_{\rho}-1)}$ corresponding to 
$\ab{F}^{(\nu_{\rho}-1)}(\{0 \} \times [0,1])$. In addition, we denote by $\bar{\bfm{\xi}}^{(\ell)} $ the pair of parameters $\bar{\bfm{\xi}}^{(\ell)} = 
({\xi}_2^{(\ell)},\xi_1^{(\ell)})$.

The idea is to construct the vertex subspace $\mathcal{W}_{0h;\bfm{v}^{(\rho)}}$ as the space of functions which can be represented by suitable linear 
combinations of functions $\phi_{\Omega^{(\ell)};i,j}$, ${\MK 0 \leq i,j \leq 2},\, 0\leq\ell\leq {\nu}_\rho-1,$ and of functions $\phi_{\Gamma^{(\ell)};i,j}$, 
$0\leq i\leq 2,\, 0\leq j\leq 4-i,\, 0 \leq\ell\leq \bar{\nu}_\rho-1$. Note that none of these functions are contained in any of the spaces 
$\mathcal{W}_{0h;\Omega^{(\ell)}}$ and $\mathcal{W}_{0h;\Gamma^{(s)}}$. {\MK Furthermore, these are exactly those functions~$\phi_{\Omega^{(\ell)};i,j}$ and $\phi_{\Gamma^{(\ell)};i,j}$, 
which are involved in the continuity constraints at the vertex, since they can possess nonzero spline coefficients (with respect to the representation~\eqref{eq:g_ell}), 
which are affected by the $C^2$-continuity conditions of more than one edge~$\Gamma^{(\ell)}$. {\VV These corresponding spline coefficients are the ones in the grey region in 
Fig.~\ref{fig:supports} }}. Recall that the functions $\phi_{\Gamma^{(\ell)};i,j}$ are $C^2$-smooth on the two-patch subdomain~$\Omega^{(\ell-1)} \cup \Omega^{(\ell)}$.

For each patch $\Omega^{(\ell)}$, $\ell=0,1,\ldots, {\nu}_{\rho} -1$, we define the function $f_{\ell} : [0,1]^2 \rightarrow \R$ as
\[
f_{\ell}  (\bfm{\xi}^{(\ell)}) = f_{\ell}^{\Gamma^{(\ell)}}  (\bfm{\xi}^{(\ell)}) +  f_{\ell}^{\Gamma^{(\ell+1)}}  
(\bfm{\xi}^{(\ell)}) -  f_{\ell}^{\Omega^{(\ell)}}  (\bfm{\xi}^{(\ell)}),
\]
where the functions $f_{\ell}^{\Gamma^{(\ell)}} , f_{\ell}^{\Gamma^{(\ell+1)}}, f_{\ell}^{\Omega^{(\ell)}} : [0,1]^2 \to \R$ are given by
{\VV
\begin{align*}
  f_{\ell}^{\Gamma^{(\ell)}} (\bfm{\xi}^{(\ell)}) & = \sum_{i=0}^2 \sum_{j=0}^{4-i}  a^{\Gamma^{(\ell)}}_{i,j} \, 
  g_{\Gamma^{(\ell); i,j}}^{(\ell)} (\bar{\bfm{\xi}}^{(\ell)})  
, \nonumber \\
  f_{\ell}^{\Gamma^{(\ell+1)}} (\bfm{\xi}^{(\ell)}) & = \sum_{i=0}^2 \sum_{j=0}^{4-i}  a^{\Gamma^{(\ell+1)}}_{i,j} \, 
  g_{\Gamma^{(\ell+1); i,j}}^{(\ell)} (\bfm{\xi}^{(\ell)}),  
\\ \label{eq:functionsonpatch} 
   f_{\ell}^{\Omega^{(\ell)}} (\bfm{\xi}^{(\ell)}) & = \sum_{i=0}^2 \sum_{j=0}^{2} a^{(\ell)}_{i,j} 
   N_{i,j}^{p,r} (\bfm{\xi}^{(\ell)}) , \nonumber 
\end{align*}
}
with $a^{\Gamma^{(\ell)}}_{i,j}, a^{\Gamma^{(\ell+1)}}_{i,j}, a_{i,j}^{(\ell)} \in \R$.
Furthermore, we define the function $\phi_{\bfm{v}^{(\rho)}} : \Omega \to \R$ as
\begin{equation}  \label{eq:defphiXi}
  \phi_{\bfm{v}^{(\rho)}} (\bfm{x}) = 
  \begin{cases}
   (f_{\ell} \circ (\ab{F}^{(\ell)})^{-1})(\bfm{x}) \;
\mbox{ if }\f \, \bfm{x} \in \Omega^{(\ell)},\; \ell=0,1,\ldots,{\nu}_\rho -1,
\\
0 \quad \mbox{ otherwise}.
\end{cases}
\end{equation}

The idea for the construction of the function $\phi_{\bfm{v}^{(\rho)}} $ is as follows. On each patch $\Omega^{(\ell)}$, $\ell=0,1,\ldots, {\nu}_{\rho} -1$, the 
function~$ \phi_{\bfm{v}^{(\rho)}} $ 
is determined by the spline function~$f_{\ell}$, where the sum of the functions $f_{\ell}^{\Gamma^{(\ell)}}$ and $f_{\ell}^{\Gamma^{(\ell+1)}}$ should ensure $C^2$-smoothness across the 
interfaces~$\Gamma^{(\ell)}$ and $\Gamma^{(\ell+1)}$, and the function~$f_{\ell}^{\Omega^{(\ell)}}$ is used to subtract those B-splines $N_{i,j}^{p,r}$ (with respect to the spline 
space $\mathcal{S}^{p,r}_{h}([0,1]^2)$), which have been added twice, see Fig.~\ref{fig:supports}. 

\begin{figure}[htp] 
\centering
\includegraphics[width=15cm]{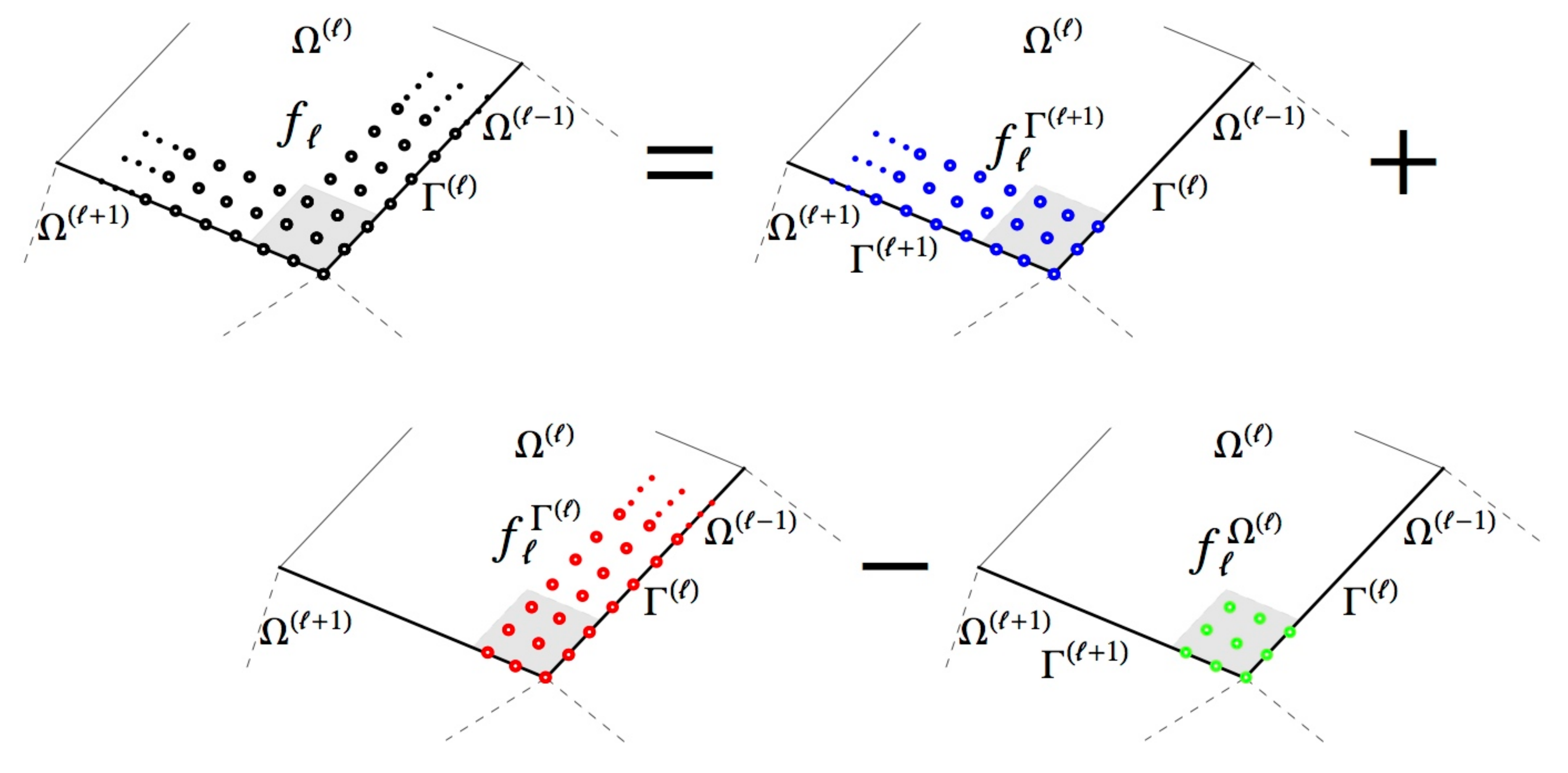}
\caption{On each patch $\Omega^{(\ell )}$, the function~$f_{\ell}$ is obtained by summing up the two functions $f_{\ell}^{\Gamma^{(\ell)}}$ and $f_{\ell}^{\Gamma^{(\ell+1)}}$ and 
by subtracting the function $f_{\ell}^{\Omega^{(\ell)}}$. The nonzero spline coefficients of the single functions {\MK with respect to the spline representation~\eqref{eq:g_ell}} 
are visualized in different colors. To ensure that the function $\phi_{\bfm{v}^{(\rho)}}$ is $C^2$-smooth on $\Omega$, the values of the corresponding spline control points of the 
functions $f_{\ell}^{\Gamma^{(\ell)}}$, $f_{\ell}^{\Gamma^{(\ell+1)}}$ and $f_{\ell}^{\Omega^{(\ell)}}$ in the grey regions have to coincide (compare Lemma~\ref{lem:C2smoothvertex}).} 
\label{fig:supports}
\end{figure}

Clearly, not any choice of the 
coefficients~$a^{\Gamma^{(\ell)}}_{i,j}$ and $a_{i,j}^{(\ell)}$, $ \ell=0,1,\ldots,{\nu}_\rho-1$,  guarantees $\phi_{\bfm{v}^{(\rho)}} \in \V$ or as needed in our case 
even~$\phi_{\bfm{v}^{(\rho)}} 
\in \VO$. The following lemma characterizes when the function~$\phi_{\bfm{v}^{(\rho)}}$ belongs to the space~$\VO$:
\begin{lem} \label{lem:C2smoothvertex}
 $\phi_{\bfm{v}^{(\rho)}} \in \VO$ if the corresponding functions $f_{\ell}^{\Gamma^{(\ell)}}$, 
 $f_{\ell}^{\Gamma^{(\ell+1)}}$, and $f_{\ell}^{\Omega^{(\ell)}}$, $\ell =0,1, \ldots,$  $\nu_{\rho}-1$, satisfy
 \begin{equation}  \label{eq:system1}
   \partial_{\xi_1^{(\ell)}}^i  
   \partial_{\xi_2^{(\ell)}}^j 
   \left( f_{\ell}^{\Gamma^{(\ell+1)}} - f_{\ell}^{\Gamma^{(\ell)}} \right)  (\bfm{0}) = 0, \quad 
   {\VV 0 \leq i,j \leq 2},
\end{equation}
 and
 \begin{equation}  \label{eq:system2}
   \partial_{\xi_1^{(\ell)}}^i  
   \partial_{\xi_2^{(\ell)}}^j 
   \left( f_{\ell}^{\Gamma^{(\ell+1)}} - f_{\ell}^{\Omega^{(\ell)}} \right)  (\bfm{0}) = 0, \quad 
   0 \leq i,j \leq 2,  
\end{equation}
and in case of a boundary vertex~$\bfm{v}^{(\rho)}$, additionally
\begin{equation} \label{eq:system3}
 a_{i,j}^{\Gamma^{(0)}} = 0 , \mbox{ and } a_{i,j}^{\Gamma^{({\nu}_{\rho})}} = 0, \quad 0 \leq i \leq 2, \, 0\leq j\leq 4-i,
\end{equation}
and
\begin{equation} \label{eq:system4}
 a_{i,j}^{(\ell)} = 0 , \quad {\MK 0 \leq i,j \leq 2, \, 0 \leq \ell \leq \nu_{\rho}-1}.
\end{equation}
\end{lem}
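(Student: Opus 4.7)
The plan is to verify that the function $\phi_{\bfm{v}^{(\rho)}}$ defined in~\eqref{eq:defphiXi} belongs to $\VO$, which means establishing two things: that it is $C^2$-smooth on $\Omega$ and that it satisfies the homogeneous boundary conditions of order $2$ wherever its support meets $\partial\Omega$. Inside each patch $\Omega^{(\ell)}$, $0 \le \ell \le \nu_\rho - 1$, smoothness is automatic because $f_\ell \in \mathcal{S}_h^{p,r}([0,1]^2)$, so the entire argument concentrates at the interior interfaces $\Gamma^{(\ell+1)}$ (for the smoothness part) and at the boundary edges $\Gamma^{(0)}, \Gamma^{(\nu_\rho)}$ when $\bfm{v}^{(\rho)}$ is a boundary vertex.

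Given an interior interface $\Gamma^{(\ell+1)}$, my first step is to exploit the two-patch construction of Section~\ref{subsec:two-patch-case}: the functions $\widetilde{\phi}_{\Gamma^{(\ell+1)};i,j}$ are by construction $C^2$-smooth across $\Gamma^{(\ell+1)}$, and since the coefficients $a^{\Gamma^{(\ell+1)}}_{i,j}$ are shared between $\Omega^{(\ell)}$ and $\Omega^{(\ell+1)}$, the pair $f_\ell^{\Gamma^{(\ell+1)}}$, $f_{\ell+1}^{\Gamma^{(\ell+1)}}$ already satisfies the linear conditions~\eqref{eq: C0}--\eqref{eq: C2}. The task thus reduces to showing that the leftover contributions $f_\ell^{\Gamma^{(\ell)}} - f_\ell^{\Omega^{(\ell)}}$ on $\Omega^{(\ell)}$ and $f_{\ell+1}^{\Gamma^{(\ell+2)}} - f_{\ell+1}^{\Omega^{(\ell+1)}}$ on $\Omega^{(\ell+1)}$ do not spoil this match.

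The core of the proof---and the step that I expect will demand the most careful bookkeeping---is the following. By Proposition~\ref{thm:mainC4} combined with the parameter swap $\bar{\bfm{\xi}}^{(\ell)} = (\xi_2^{(\ell)},\xi_1^{(\ell)})$, the contribution of $f_\ell^{\Gamma^{(\ell)}}$ to the spline coefficients $d_{i,j}^{(\ell)}$ in~\eqref{eq:g_ell} is confined to $j\in\{0,1,2\}$, while the contribution of $f_\ell^{\Omega^{(\ell)}}$ lives in $i,j\in\{0,1,2\}$. Intersecting with the three-column strip $i\in\{0,1,2\}$ adjacent to $\Gamma^{(\ell+1)}$, the only coefficients that these two terms can affect are those in the $3\times 3$ corner block at the vertex. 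Since the $(p+1)$-fold boundary knot forces only $N_0^{p,r},N_1^{p,r},N_2^{p,r}$ to carry nonzero values and derivatives of orders $0,1,2$ at $\xi=0$, and the corresponding $3\times 3$ evaluation matrix is invertible for $p\ge 5$, the linear map sending these nine coefficients to the bidegree-$(2,2)$ jet at the vertex is a bijection. Conditions~\eqref{eq:system1}--\eqref{eq:system2} therefore force the corner blocks of $f_\ell^{\Gamma^{(\ell)}}$, $f_\ell^{\Gamma^{(\ell+1)}}$ and $f_\ell^{\Omega^{(\ell)}}$ to coincide, so that the corner block---and hence the whole three-column strip---of the sum $f_\ell = f_\ell^{\Gamma^{(\ell)}} + f_\ell^{\Gamma^{(\ell+1)}} - f_\ell^{\Omega^{(\ell)}}$ matches the one of $f_\ell^{\Gamma^{(\ell+1)}}$ alone. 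The analogous identity holds for $f_{\ell+1}$, so the two-patch $C^2$ conditions lift to a global $C^2$ match for $\phi_{\bfm{v}^{(\rho)}}$ across $\Gamma^{(\ell+1)}$.

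For an interior vertex no further work is needed, since the support of $\phi_{\bfm{v}^{(\rho)}}$ does not touch $\partial \Omega$. In the boundary vertex case, \eqref{eq:system4} kills every $f_\ell^{\Omega^{(\ell)}}$ and \eqref{eq:system3} kills $f_0^{\Gamma^{(0)}}$ and $f_{\nu_\rho - 1}^{\Gamma^{(\nu_\rho)}}$; combined with \eqref{eq:system1}--\eqref{eq:system2} and the bijectivity argument above, this forces the $3\times 3$ corner block of every $f_\ell^{\Gamma^{(m)}}$ (and hence of each $f_\ell$) to vanish. Consequently the three-row strip of spline coefficients on $\Omega^{(0)}$ adjacent to $\Gamma^{(0)}$ and the analogous strip on $\Omega^{(\nu_\rho - 1)}$ adjacent to $\Gamma^{(\nu_\rho)}$ are identically zero, which is precisely the condition for $\phi_{\bfm{v}^{(\rho)}}$, $\partial\phi_{\bfm{v}^{(\rho)}}/\partial \ab{n}$ and $\triangle \phi_{\bfm{v}^{(\rho)}}$ to vanish along those boundary edges.
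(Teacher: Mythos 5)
Your treatment of the interior interfaces is sound and is essentially the paper's own argument made explicit at the coefficient level: conditions \eqref{eq:system1}--\eqref{eq:system2}, via the invertible map between the $3\times 3$ corner block of spline coefficients and the bidegree-$(2,2)$ jet at $\bfm{v}^{(\rho)}$, force the three-index strip of $f_{\ell}$ along each interface to coincide with that of the corresponding edge-function combination, so the two-patch conditions \eqref{eq: C0}--\eqref{eq: C2} carry over to the pair $(f_{\ell},f_{\ell+1})$; this is exactly what Fig.~\ref{fig:supports} and the trace identities in the paper's proof express.

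The gap lies in how you dispose of the outer boundary of the subdomain $\cup_{\ell=0}^{\nu_{\rho}-1}\Omega^{(\ell)}$. First, your claim that for an interior vertex the support of $\phi_{\bfm{v}^{(\rho)}}$ does not touch $\partial\Omega$ is false in general: the patches surrounding an interior vertex may have edges on $\partial\Omega$ (e.g.\ the inner vertex of domain (a) in Example~\ref{ex:example1}, whose three surrounding patches exhaust $\Omega$), so the boundary conditions \eqref{eq:triharmonic_problem_boundary} must still be verified there. Second, and independently of $\partial\Omega$, the function $\phi_{\bfm{v}^{(\rho)}}$ is extended by zero outside $\cup_{\ell}\Omega^{(\ell)}$, so you must show that each $f_{\ell}$ has vanishing value, gradient and Hessian along the far edges $\xi_1^{(\ell)}=1$ and $\xi_2^{(\ell)}=1$; without this, membership even in $\V$ is unproven, and your boundary-vertex paragraph only covers the two edges $\Gamma^{(0)}$ and $\Gamma^{(\nu_{\rho})}$ emanating from the vertex, leaving the rest of $\partial\bigl(\cup_{\ell}\Omega^{(\ell)}\bigr)$ untreated. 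Your argument uses only the confinement of the edge-function coefficients to the first three rows transversal to the interface (the sum over $m\le 2$ in Proposition~\ref{thm:mainC4}); what is needed here is the other half of that proposition, namely the upper bound $n\le\min(d-1,\,d-n_i+j-i+m)$, which for the indices $0\le j\le 4-i$ occurring in the vertex construction, combined with the standing assumption $k\ge\frac{9-p}{p-r-2}$ (so that, e.g., $d-n_0+4+2=2k+6\le d-4$), keeps all nonzero coefficients of $g^{(\tau)}_{\Gamma^{(\ell)};i,j}$ away from the last three indices. This is precisely how the paper concludes: Proposition~\ref{thm:mainC4}, together with \eqref{eq:system3}--\eqref{eq:system4} in the boundary-vertex case, gives vanishing values, gradients and Hessians of $\phi_{\bfm{v}^{(\rho)}}$ on the whole boundary of the multi-patch subdomain, which simultaneously yields the $C^2$-smooth zero extension and the homogeneous boundary conditions.
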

\begin{proof}
By \eqref{eq:defphiXi}, the function~$\phi_{\bfm{v}^{(\rho)}}$ possesses a support
 \[
  \mbox{supp}({\phi_{\bfm{v}^{(\rho)}}}) \subseteq \cup_{\ell=0}^{{\nu}_{\rho} -1} \Omega^{(\ell)}. 
 \]
Equations~\eqref{eq:system1} and \eqref{eq:system2} ensure that the coefficients~$a^{\Gamma^{(\ell)}}_{i,j}$ and $a_{i,j}^{(\ell)}$ are well-defined, which implies that the 
 function~$\phi_{\bfm{v}^{(\rho)}}$ is well-defined. The function~$\phi_{\bfm{v}^{(\rho)}}$ is now $C^2$-smooth across the interfaces~$\Gamma^{(\ell)}$, since its values, 
 gradients and Hessians along the interfaces~$\Gamma^{(\ell)}$ are given by
 \[
  \phi_{\bfm{v}^{(\rho)}}(\Gamma^{(\ell)}) = \sum_{i=0}^2 \sum_{j=0}^{4-i}  a^{\Gamma^{(\ell)}}_{i,j} \, \phi_{\Gamma^{(\ell)};i,j}(\Gamma^{(\ell)}),
 \]
 \[
 \nabla  \phi_{\bfm{v}^{(\rho)}}(\Gamma^{(\ell)}) = \sum_{i=0}^2 \sum_{j=0}^{4-i}  a^{\Gamma^{(\ell)}}_{i,j} \, \nabla \phi_{\Gamma^{(\ell)};i,j}(\Gamma^{(\ell)})
 \]
 and
  \[
 \mbox{Hess} (\phi_{\bfm{v}^{(\rho)}})(\Gamma^{(\ell)}) = \sum_{i=0}^2 \sum_{j=0}^{4-i}  a^{\Gamma^{(\ell)}}_{i,j} \, \mbox{Hess}(\phi_{\Gamma^{(\ell)};i,j})(\Gamma^{(\ell)}),
 \]
respectively, Finally, we obtain  $\phi_{\bfm{v}^{(\rho)}} \in \VO$, since Proposition~\ref{thm:mainC4} and equations~\eqref{eq:system3} {\MK and \eqref{eq:system4}} (in case of a 
boundary vertex ~$\bfm{v}^{(\rho)}$) ensure that the function $\phi_{\bfm{v}^{(\rho)}}$ has vanishing values, gradients and Hessians already on the boundary of the multi-patch subdomain 
$\cup_{\ell=0}^{\nu_{\rho}-1} \Omega^{(\ell)}$.
\end{proof}

The equations~\eqref{eq:system1} and \eqref{eq:system2}, and additionally equations~\eqref{eq:system3} {\MK and \eqref{eq:system4}} in case of a boundary vertex~$\bfm{v}^{(\rho)}$, 
form a homogeneous linear system
\begin{equation} \label{eq:whole_system}
 H^{(\rho)} \ab{a}^{(\rho)} = \ab{0},
\end{equation}
where $\ab{a}^{(\rho)}$ is the vector of all involved coefficients $a_{i,j}^{\Gamma^{(\ell)}}$ and $a_{i,j}^{(\ell)}$.
Any basis of the null space (i.e., the kernel) of the matrix~$H^{(\rho)}$, determines $ \dim (\ker H^{(\rho)})$ linearly independent functions~$\phi_{\bfm{v}^{(\rho)}} \in \VO$, which 
will be denoted by $\phi_{\bfm{v}^{(\rho)};m}$, $m=1,2,\ldots, \dim (\ker H^{(\rho)})$. One possible strategy is to find a basis by constructing minimal determining sets 
(cf.~\cite{BeMa14, LaSch07}) for the unknown coefficients of the homogeneous linear system~\eqref{eq:whole_system}. In our examples in Section~\ref{sec:triharmonic_examples}, we 
use the minimal determining set algorithm introduced in \cite[Section 6.1]{KaVi17a}, 
which works well and yields well-conditioned functions, {\MK cf. Examples~\ref{ex:example1} and \ref{ex:example2}}.

Finally, the vertex subspace $\mathcal{W}_{0h;\bfm{v}^{(\rho)}}$ is defined as
\begin{equation*}
 \mathcal{W}_{0h;\bfm{v}^{(\rho)}} = \Span \{ \phi_{\bfm{v}^{(\rho)};m}\,  |\; m = 1,2,\ldots, \dim (\ker H^{(\rho)})  \}.
\end{equation*}

\begin{lem}  \label{lem:vertexspace}
We have
\[
 \mathcal{W}_{0h;\bfm{v}^{(\rho)}}   \subseteq \VO. 
\] 
\end{lem}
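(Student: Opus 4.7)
The plan is to invoke Lemma~\ref{lem:C2smoothvertex} directly, since it already does all the hard work. By construction, each basis function $\phi_{\bfm{v}^{(\rho)};m}$, for $m=1,2,\ldots,\dim(\ker H^{(\rho)})$, is obtained from a coefficient vector $\ab{a}^{(\rho)}_m \in \ker H^{(\rho)}$. This means that the corresponding coefficients $a^{\Gamma^{(\ell)}}_{i,j}$ and $a^{(\ell)}_{i,j}$, which define the functions $f_\ell^{\Gamma^{(\ell)}}$, $f_\ell^{\Gamma^{(\ell+1)}}$ and $f_\ell^{\Omega^{(\ell)}}$ and hence the piecewise representation of $\phi_{\bfm{v}^{(\rho)};m}$ via \eqref{eq:defphiXi}, satisfy exactly the linear system \eqref{eq:whole_system}, i.e.\ conditions \eqref{eq:system1}--\eqref{eq:system2} (and additionally \eqref{eq:system3}--\eqref{eq:system4} whenever $\bfm{v}^{(\rho)}$ is a boundary vertex). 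Lemma~\ref{lem:C2smoothvertex} then yields $\phi_{\bfm{v}^{(\rho)};m} \in \VO$ for every $m$.

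To conclude, I would observe that $\VO$ is a linear subspace: the defining properties (membership in $\V$, together with the vanishing of the trace, the normal derivative and the Laplacian along $\partial \Omega$) are preserved under arbitrary linear combinations. Hence
\[
  \mathcal{W}_{0h;\bfm{v}^{(\rho)}} = \Span\{\phi_{\bfm{v}^{(\rho)};m} \,|\, m=1,\ldots,\dim(\ker H^{(\rho)})\} \subseteq \VO,
\]
which is the claim.

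There is essentially no real obstacle here, since the $C^2$-smoothness analysis and the boundary-condition bookkeeping have already been absorbed into Lemma~\ref{lem:C2smoothvertex}. The only point that deserves a brief mention is that the definition of the $\phi_{\bfm{v}^{(\rho)};m}$ depends on a chosen basis of $\ker H^{(\rho)}$ (for instance the one produced by the minimal determining set algorithm of \cite{KaVi17a} referenced just before the lemma), but any such basis vector satisfies \eqref{eq:whole_system} by definition, so the conclusion is independent of that choice and the span is well defined.
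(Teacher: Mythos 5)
Your proposal is correct and follows essentially the same route as the paper: the paper's own proof simply notes that, by construction (i.e.\ because the coefficient vectors lie in $\ker H^{(\rho)}$ and hence satisfy the hypotheses of Lemma~\ref{lem:C2smoothvertex}), each $\phi_{\bfm{v}^{(\rho)};m}$ belongs to $\VO$, so the span does too. Your version just spells out the kernel argument and the linearity of $\VO$ more explicitly than the paper does.
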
 
\begin{proof}
Recall \eqref{eq:defphiXi}. The functions $\phi_{\bfm{v}^{(\rho)};m}$, $m=1,2,\ldots, \dim (\ker H^{(\rho)})$, are constructed in such a way that they satisfy $\phi_{\bfm{v}^{(\rho)};m} \in \VO$.
\end{proof}

\begin{rem}
A further possible way for the computation of suitable vertex subspaces could be the extension of the method~\cite{KaSaTa17c} proposed for the case of $C^1$-smooth isogeometric 
functions to our case of $C^2$-smooth isogeometric functions. In~\cite{KaSaTa17c}, the vertex subspace is defined by globally $C^1$-smooth functions which are $C^2$-smooth at the vertex. 
However, the extension of this approach to our case would require globally $C^2$-smooth functions which have to be $C^4$-smooth at the vertex.
\end{rem}

\subsection{The space~$\W$}
Recall that the space~$\W$ is the direct sum~\eqref{eq:Wh}. 
\begin{thm} \label{thm:discretizationspace}
It holds that
\[
 \W \subseteq \VO,
\]
and the collection of functions
\begin{align}  \label{eq:setsOfFunctions} 
&\phi_{\Omega^{(\ell)};i,j} , \quad  i,j=3,4,\ldots,p+k(p-r)-3,\; \ell=1,2,\ldots,P, \nonumber \\
&\phi_{\Gamma^{(s)};i,j} , \quad  i=0,1,2,\;  {\MK j=5-i,6-i,\ldots,n_{i}+i-6,} \;s=1,2,\ldots,E,\\ 
&\phi_{\bfm{v}^{(\rho)};m}, \quad  m=1,2,\ldots, \dim (\ker H^{(\rho)}), \; \rho=1,2,\ldots,V, \nonumber
\end{align} 
forms a basis of the space $\W$.
\end{thm}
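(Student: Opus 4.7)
The plan is to establish the two claims separately: first the inclusion $\W \subseteq \VO$, then the basis property, with the genuine work lying in the linear independence of the union of the three families of functions.

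The first assertion $\W \subseteq \VO$ is an immediate consequence of Lemmas~\ref{lem:patchspace}, \ref{lem:edgespace}, and \ref{lem:vertexspace}. Since each of $\mathcal{W}_{0h;\Omega^{(\ell)}}$, $\mathcal{W}_{0h;\Gamma^{(s)}}$, and $\mathcal{W}_{0h;\bfm{v}^{(\rho)}}$ is a subspace of $\VO$, and $\VO$ is a vector space, the (direct) sum~\eqref{eq:Wh} stays inside $\VO$.

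For the second assertion, spanning is trivial since each summand in~\eqref{eq:Wh} is defined as the span of precisely the corresponding functions listed in~\eqref{eq:setsOfFunctions}. The core of the proof is therefore linear independence. The strategy is to exploit the explicit B-spline coefficient structure provided by Proposition~\ref{thm:mainC4} together with the index conditions in the three families to show that they occupy essentially disjoint blocks of B-spline indices on each patch. Concretely, for any linear combination summing to zero, I would restrict to a single patch~$\Omega^{(\ell)}$, write out the pullback to $[0,1]^2$ as a spline in $\mathcal{S}_{h}^{p,r}([0,1]^2)$ via~\eqref{eq:g_ell}, and examine which coefficients $d^{(\ell)}_{i,j}$ are affected by each type of function: patch functions $\phi_{\Omega^{(\ell)};i,j}$ only modify coefficients with indices $(i,j) \in \{3,\dots,p+k(p-r)-3\}^2$; edge functions $\phi_{\Gamma^{(s)};i,j}$ with $j = 5-i,\dots,n_i+i-6$ only modify the three-row strip $m \in \{0,1,2\}$ adjacent to the interface, and by Proposition~\ref{thm:mainC4} the condition $j \geq 5-i$ forces the second index~$n$ to be at least $3$ and the condition $j \leq n_i+i-6$ forces $n \leq d-4$, i.e., the edge contributions avoid the $3\times 3$ corner blocks; vertex functions $\phi_{\bfm{v}^{(\rho)};m}$ are supported, by construction~\eqref{eq:defphiXi} and the structure illustrated in Fig.~\ref{fig:supports}, in precisely those corner blocks together with the adjacent small edge-strip pieces that the edge functions exclude.

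Because the three index regions on each patch are disjoint, the linear independence of the B-splines $N_{i,j}^{p,r}$ allows me to separate any assumed linear relation into three independent relations: one on the interior patch coefficients, one on the reduced edge strips, and one on the vertex corner blocks. Within each family, independence then follows from: linear independence of the B-spline basis (for patch functions); the construction in~\cite{KaVi17c}, recalled in Section~\ref{subsec:two-patch-case}, which establishes that the functions $g_{\Gamma^{(s)};i,j}^{(\tau)}$ are linearly independent on their two-patch subdomain (for edge functions); and the fact that $\{\phi_{\bfm{v}^{(\rho)};m}\}_{m=1}^{\dim \ker H^{(\rho)}}$ was chosen as a basis of $\ker H^{(\rho)}$ (for vertex functions). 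Independence across different patches, edges, and vertices is further guaranteed by the disjointness of their supports up to the shared interface/vertex regions already accounted for.

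The main technical obstacle will be carefully verifying the index-disjointness claim for the edge-vertex boundary: one has to confirm that the cut-off $j \geq 5-i$ in the edge family is sharp enough that the rows/columns of coefficients $0 \leq n \leq 2$ near the vertex endpoint are left untouched by edge functions and can be freely assigned by the vertex construction, while simultaneously ensuring no patch/edge/vertex function is inadvertently omitted from the spanning side. This reduces to chasing the inequalities in the summation bounds of Proposition~\ref{thm:mainC4}, which is straightforward but needs to be done explicitly; once it is in place, the basis property follows.
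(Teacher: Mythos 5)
Your handling of the inclusion $\W \subseteq \VO$ and of the spanning property coincides with the paper's proof; the divergence, and the gap, is in the linear-independence step. Your key claim is that the three families occupy pairwise disjoint blocks of spline coefficients with respect to \eqref{eq:g_ell}, so that any vanishing linear combination splits into three separate relations. This is correct for the patch functions versus the rest (this is exactly the paper's second fact), and your computation that the selected edge functions $\phi_{\Gamma^{(s)};i,j}$, $5-i\le j\le n_i+i-6$, avoid the columns $n\le 2$ and $n\ge d-3$ of the three-row strip is right. But the disjointness fails for the edge/vertex pair: by \eqref{eq:defphiXi} the vertex functions are built from the functions $\phi_{\Gamma^{(\ell)};i,j}$ with $0\le j\le 4-i$ (plus the corner B-splines), and these are \emph{not} confined to the excluded columns $n\le 2$. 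Proposition~\ref{thm:mainC4} only gives the lower bound $n\ge\max(0,i+j-m)$, so for instance $\phi_{\Gamma^{(\ell)};0,4}$ has in general nonzero coefficients at $(m,n)=(2,3),(1,3),(0,4),\dots$, i.e., precisely in the strip region occupied by the selected edge functions such as $\phi_{\Gamma^{(\ell)};0,5}$ (whose coefficients start at $n=5-m$); this overlap is also visible in Fig.~\ref{fig:supports} and in the vertex functions of Fig.~\ref{fig:ex_functions_vertex}, whose supports extend several cells along the interfaces rather than staying in the corner blocks. Consequently a vanishing linear combination cannot be separated into an ``edge relation'' and a ``vertex relation'' by coefficient support alone, and the within-family independence facts you invoke afterwards do not close the argument. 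The technical obstacle you yourself flagged is not just delicate -- the index-disjointness you would need simply does not hold.

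The paper resolves the edge/vertex interplay structurally rather than by support disjointness: besides (i) independence within each family and (ii) the coefficient-disjointness of the selected patch functions from everything else, it uses (iii) that each $\phi_{\bfm{v}^{(\rho)};m}$ is a linear combination solely of the generators $\phi_{\Omega^{(\ell)};i,j}$, $0\le i,j\le 2$, and $\phi_{\Gamma^{(\ell)};i,j}$, $0\le j\le 4-i$, none of which is contained in any patch or edge subspace. Expanding a putative relation in these generators, one is left with a relation among distinct members of the full per-interface edge families $\phi_{\Gamma^{(s)};i,j}$, $j=0,\dots,n_i-1$ (linearly independent by the construction recalled from \cite{KaVi17c}) together with corner B-splines, which forces all expansion coefficients to vanish, and then the independence of the chosen kernel basis of $H^{(\rho)}$ gives the vanishing of the vertex coefficients. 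To repair your proof you would need to replace the claimed edge/vertex disjointness by an argument of this type.
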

\begin{proof}
$\W \subseteq \VO$ is a direct consequence of Lemma~\ref{lem:patchspace}, \ref{lem:edgespace} and \ref{lem:vertexspace}, and the definition of the space~$\W$, 
see~\eqref{eq:Wh}. {\MK By construction, the collection of functions~\eqref{eq:setsOfFunctions} spans the space~$\W$, and all functions are linearly independent. The latter 
property follows directly from the following tree facts. First, the functions $\phi_{\Omega^{(\ell)};i,j}$, $\phi_{\Gamma^{(s)};i,j}$ and $\phi_{\bfm{v}^{(\rho)};m}$ are linearly 
independent in their particular sets. Second, the selected functions~$\phi_{\Omega^{(\ell)};i,j}$ do not have a common set of nonzero coefficients with the corresponding 
functions~$\phi_{\Gamma^{(s)};i,j}$ and $\phi_{\bfm{v}^{(\rho)};m}$ with respect to spline representation~\eqref{eq:g_ell}. Third, the functions~$\phi_{\bfm{v}^{(\rho)};m}$ 
are linear combinations only of functions~ $\phi_{\Omega^{(\ell)};i,j}$ and $\phi_{\Gamma^{(s)};i,j}$, which are not contained in any of the spaces~$\mathcal{W}_{0h;\Omega^{(\ell)}}$
and $\mathcal{W}_{0h;\Gamma^{(s)}}$.}
\end{proof}

\begin{rem}
 The functions $\phi_{\Omega^{(\ell)};i,j}$, $\phi_{\Gamma^{(s)};i,j}$ and $\phi_{\bfm{v}^{(\rho)};m}$ are called patch, edge and vertex functions, respectively. All these functions 
 possess a small local support, and are obtained by {\MK computing the null space of} a small system of linear equations and/or by simple explicit formulae. 
 {\MK The patch functions~$\phi_{\Omega^{(\ell)};i,j}$ are just the ``standard'' isogeometric functions 
 {\VV whose supports are contained} in one patch {\VV only}. The small, local supports of 
 the edge and vertex functions are contained in two or in at least two patches, respectively. More precisely, the edge functions~$\phi_{\Gamma^{(s)};i,j}$  {\VV have their supports contained} 
 in a small 
 region across the common interface, and the vertex functions~$\phi_{\bfm{v}^{(\rho)};m}$ possess a support in the vicinity of the vertex. While, the edge functions interpolate 
 values and specific first and second derivatives along the common interface, cf.~\cite{KaVi17c}, the vertex functions are just built up from functions~$\phi_{\Omega^{(\ell)};i,j}$ 
 and $\phi_{\Gamma^{(s)};i,j}$, which are not contained in any patch subspace~$\mathcal{W}_{0h;\Omega^{(\ell)}}$ and in any edge subspace~$\mathcal{W}_{0h;\Gamma^{(s)}}$, respectively.} 

 By means of interpolation, {\MK the edge and vertex functions, or more precisely, their} spline functions  $\phi_{\Gamma^{(s)};i,j} \circ \ab{F}^{(\ell)}$ and 
 $\phi_{\bfm{v}^{(\rho)};m} \circ \ab{F}^{(\ell)}$ can be represented as {\MK a linear combination of} the spline functions $\phi_{\Omega^{(\ell)};i,j} \circ \ab{F}^{(\ell)}$, 
 {\MK i.e.} with respect to the spline representation~\eqref{eq:g_ell} (compare e.g., \cite{KaVi17c}).
\end{rem}

\begin{ex} \label{ex:functions}
We consider the three-patch domain~(a) visualized in Fig.~\ref{fig:example1}~(first row). The space~$\W$ is defined as
\[ 
\W =  \left(\bigoplus_{\ell=1}^3 \mathcal{W}_{0h;\Omega^{(\ell)}}\right) \oplus \left(\bigoplus_{s=1}^3 \mathcal{W}_{0h;\Gamma^{(s)}}\right) 
    \oplus \left(\bigoplus_{\rho=1}^4 \mathcal{W}_{0h;\bfm{v}^{(\rho)}}\right)
\]
with the vertices $\bfm{v}^{(1)}=(\frac{17}{3},2)$, $\bfm{v}^{(2)}=(\frac{35}{4},\frac{15}{7})$, $\bfm{v}^{(3)}=(\frac{13}{3},4)$ and $\bfm{v}^{(4)}=(5,0)$, 
and the edges $\Gamma^{(1)}=\Omega^{(1)} \cap \Omega^{(2)}$, $\Gamma^{(2)}=\Omega^{(2)} \cap \Omega^{(3)}$ and $\Gamma^{(3)}=\Omega^{(3)} \cap \Omega^{(1)}$. 
For $p=5$, $r=2$ and $h=\frac{1}{6}$, the dimensions of the single subspaces are given by 
\[
\dim \mathcal{W}_{0h;\Omega^{(\ell)}} = 225, \mbox{ } \dim \mathcal{W}_{0h;\Gamma^{(s)}} =6 , \mbox{ } \dim \mathcal{W}_{0h;\bfm{v}^{(1)}} = 16 \mbox{ and } 
\dim \mathcal{W}_{0h;\bfm{v}^{(\rho)}} = 3,
\]
for $s,\ell=1,2,3$ and $\rho=2,3,4$. Furthermore, the functions of the edge space 
$\mathcal{W}_{0h;\Gamma^{(1)}}$ and the functions of the vertex spaces $\mathcal{W}_{0h;\bfm{v}^{(1)}}$ and $\mathcal{W}_{0h;\bfm{v}^{(2)}}$ are 
shown in Fig.~\ref{fig:ex_functions_edge} and Fig.~\ref{fig:ex_functions_vertex}, respectively. {\MK Recall that the functions of the edge spaces 
are determined by the explicit representation~\eqref{eq:basisFunctionsGenericG}, and that the functions of the vertex spaces are defined via appropriate  
bases of the null spaces of the corresponding homogeneous linear systems~\eqref{eq:whole_system}, which are computed by means of the minimal determining set 
algorithm~\cite[Section 6.1]{KaVi17a}.}

\begin{figure}[htp]
\centering\footnotesize
\begin{tabular}{ccc}
\includegraphics[width=4.0cm,clip]{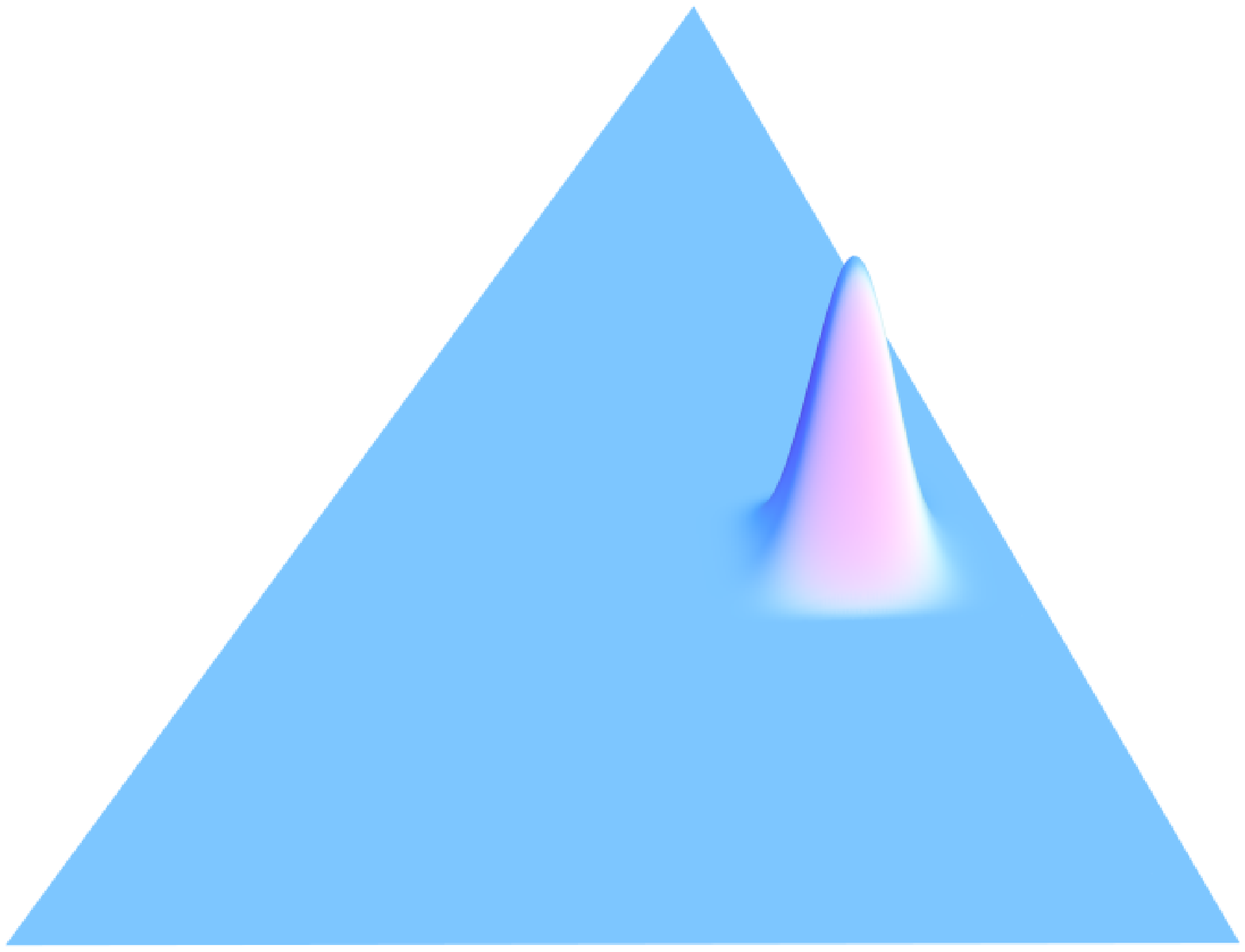} &
\includegraphics[width=4.0cm,clip]{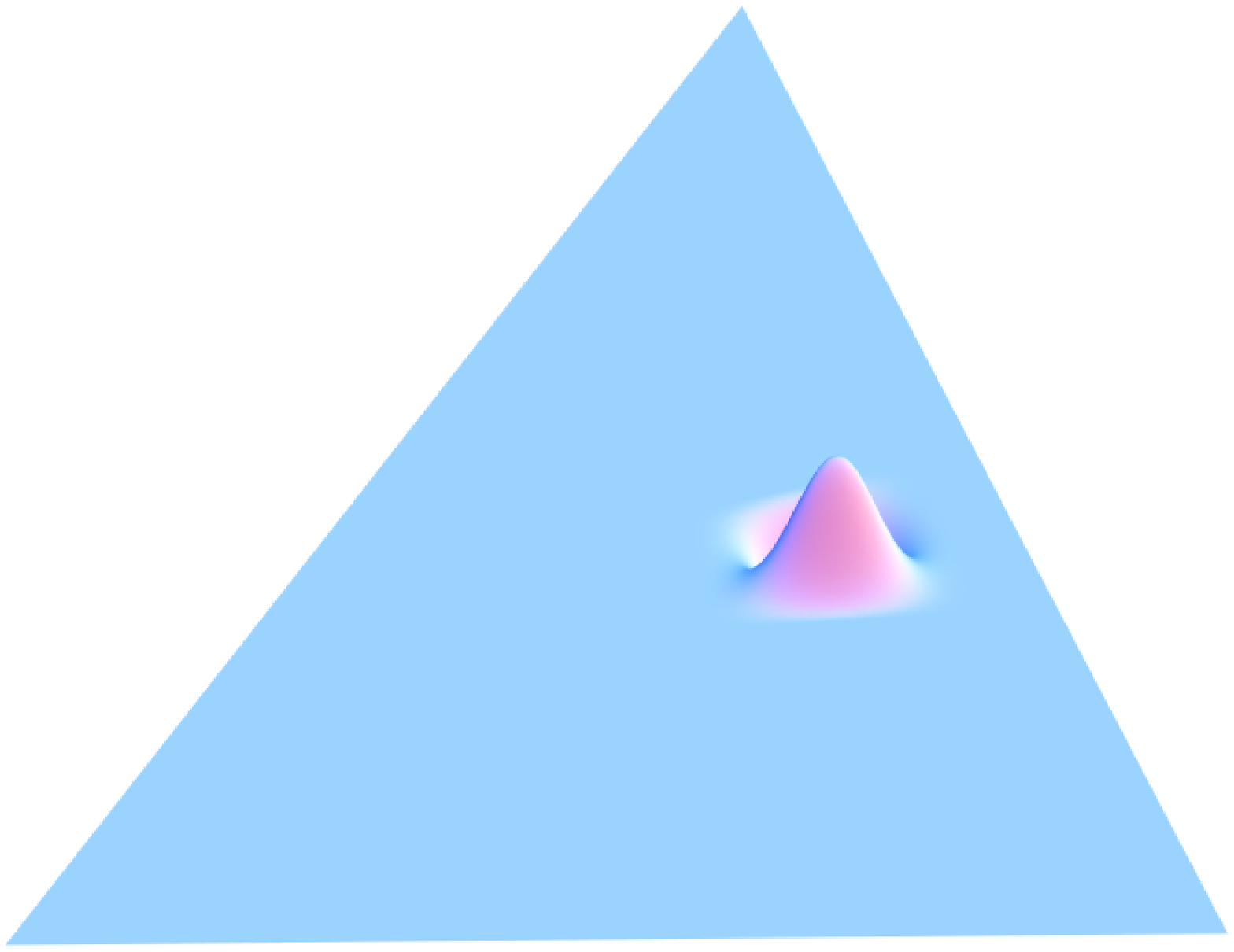} &
\includegraphics[width=4.0cm,clip]{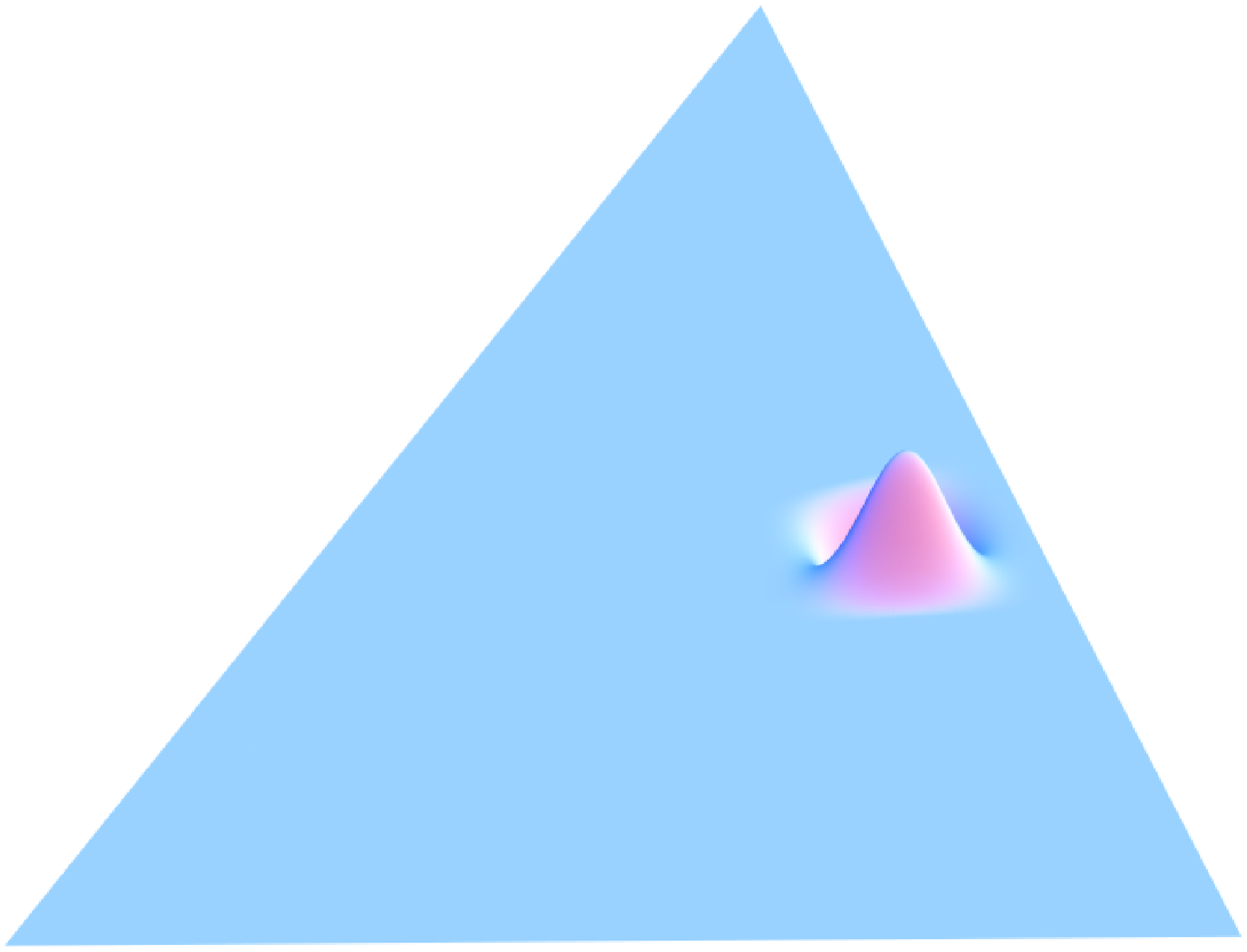} \\
$\phi_{\Gamma^{(1)};0,5}$ & $\phi_{\Gamma^{(1)};1,4}$ & $\phi_{\Gamma^{(1)};1,5}$  \\
\includegraphics[width=4.0cm,clip]{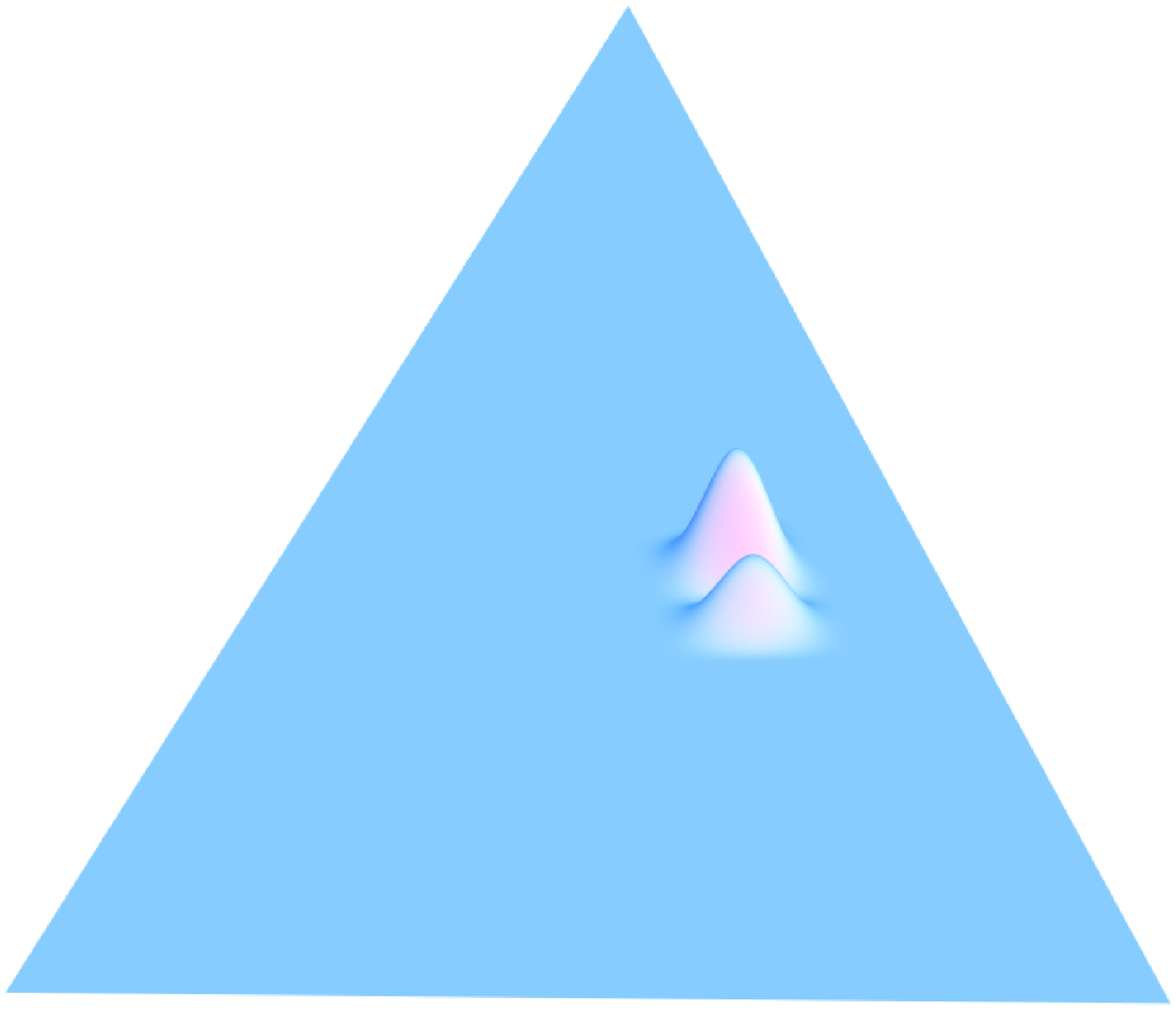} &
\includegraphics[width=4.0cm,clip]{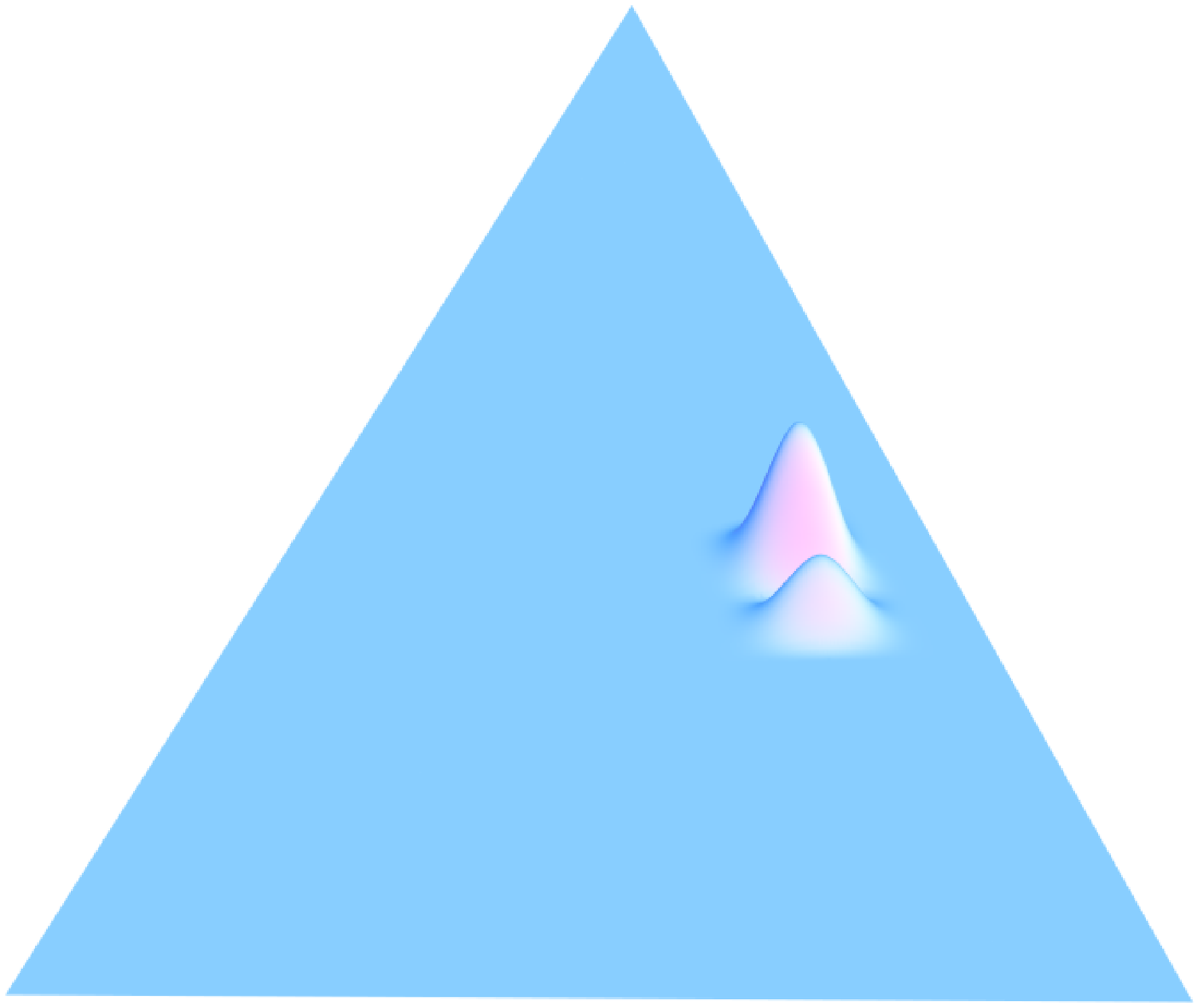} &
\includegraphics[width=4.0cm,clip]{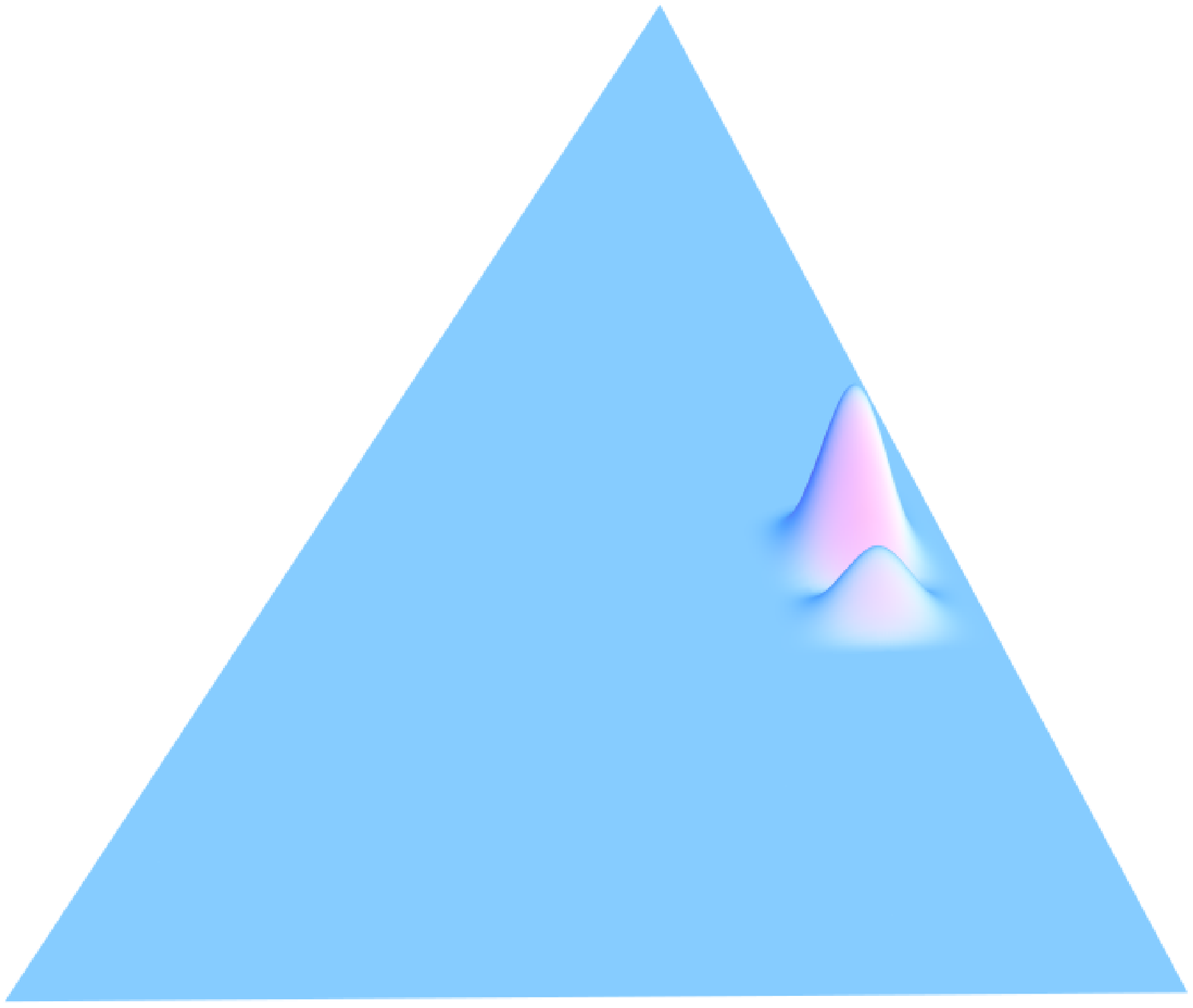} \\
$\phi_{\Gamma^{(1)};2,3}$  & $\phi_{\Gamma^{(1)};2,4}$  & $\phi_{\Gamma^{(1)};2,5}$  
\end{tabular}
\caption{Graphs of the functions of the edge space $\mathcal{W}_{0h;\Gamma^{(1)}}$ for $p=5$, $r=2$ and $h=\frac{1}{6}$ (cf. Example~\ref{ex:functions}).}
\label{fig:ex_functions_edge}
\end{figure}

\begin{figure}[htp]
\centering\footnotesize
\begin{tabular}{cccc}
\includegraphics[width=3.6cm,clip]{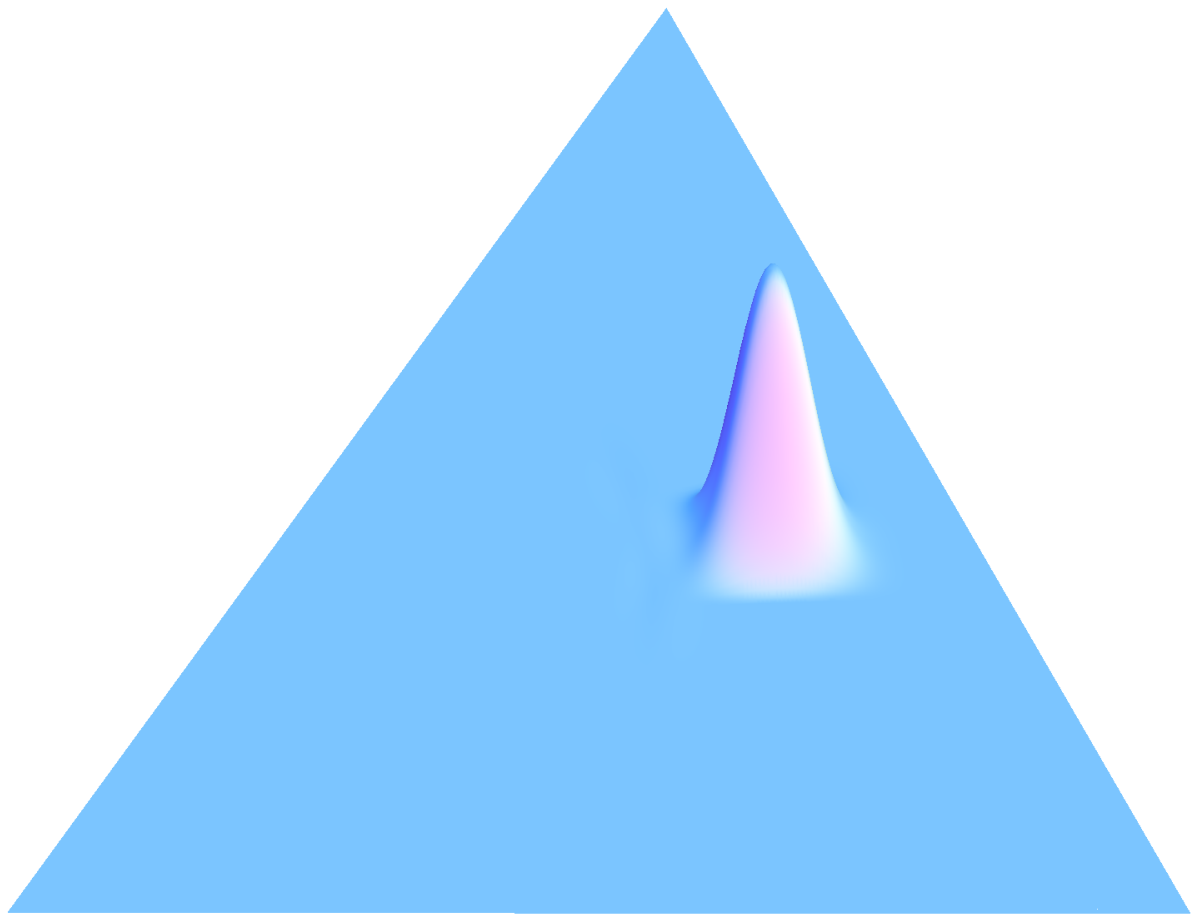} &
\includegraphics[width=3.6cm,clip]{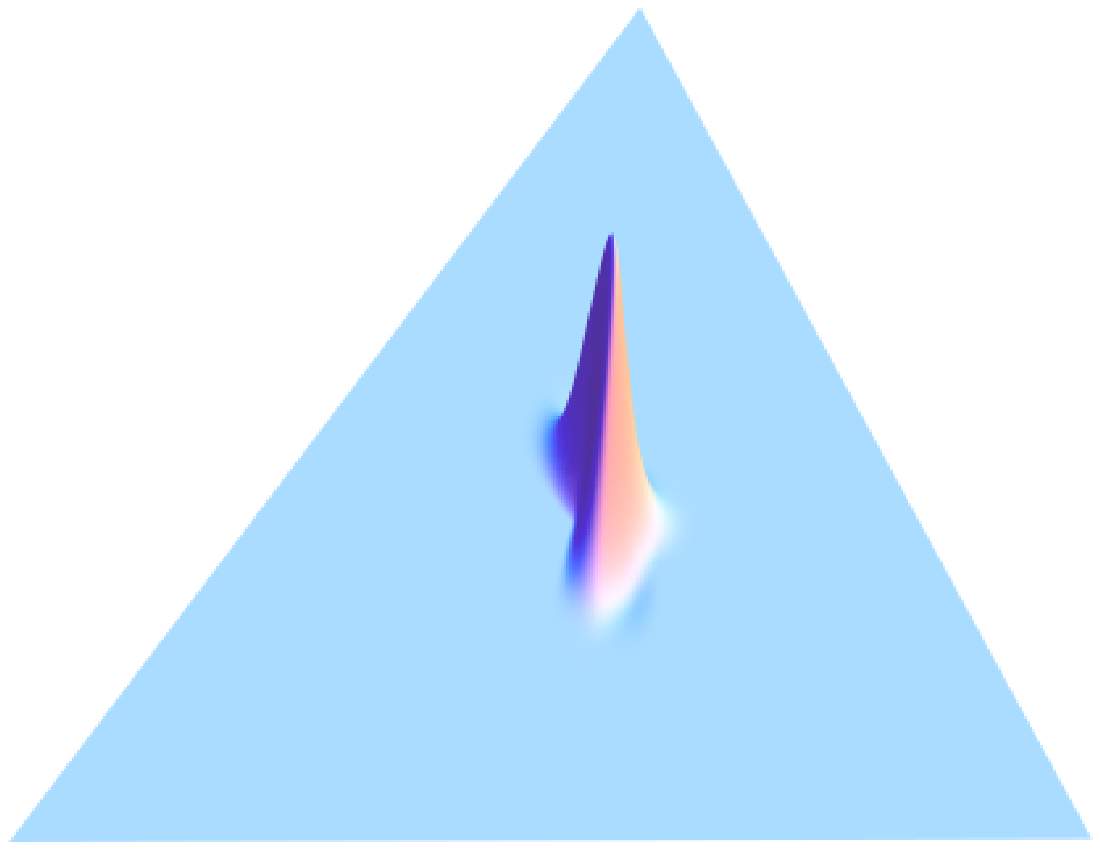} &
\includegraphics[width=3.6cm,clip]{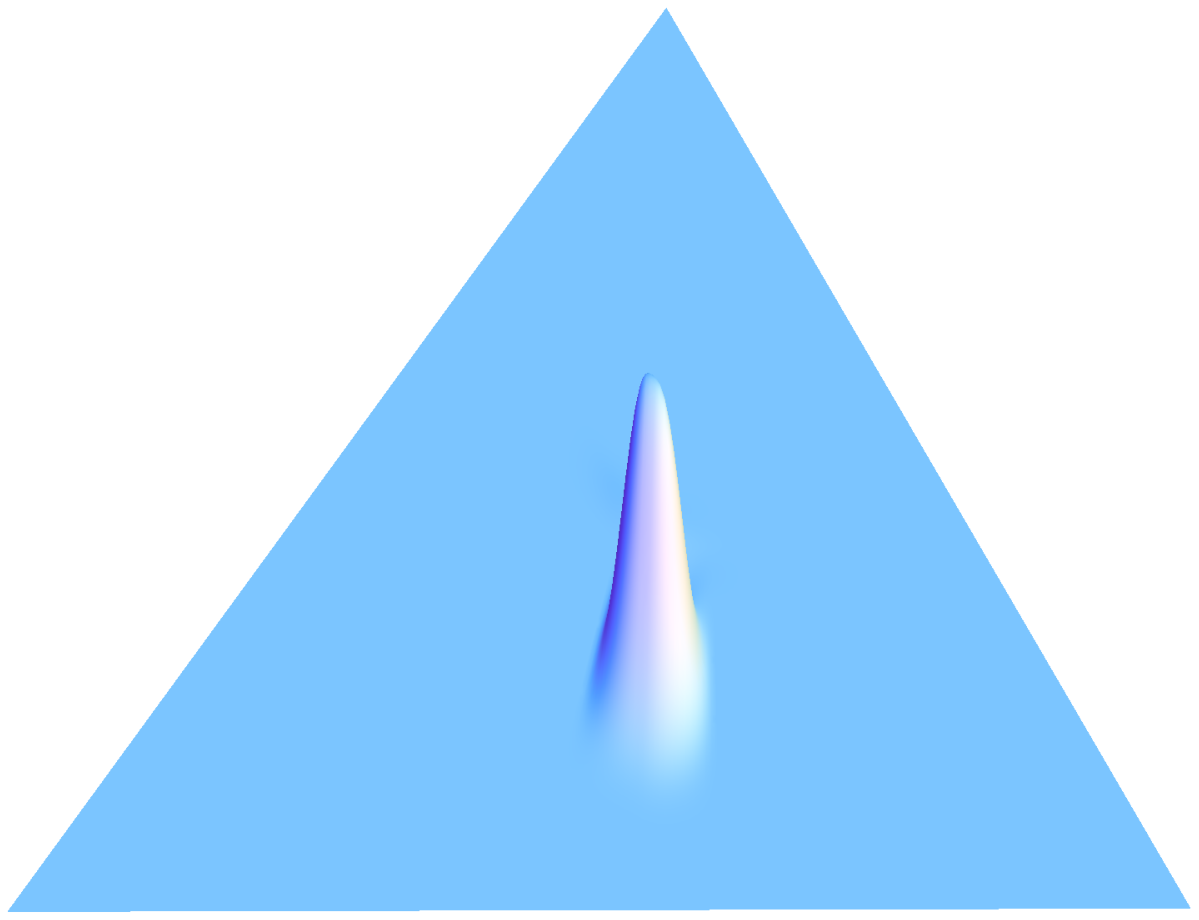} &
\includegraphics[width=3.6cm,clip]{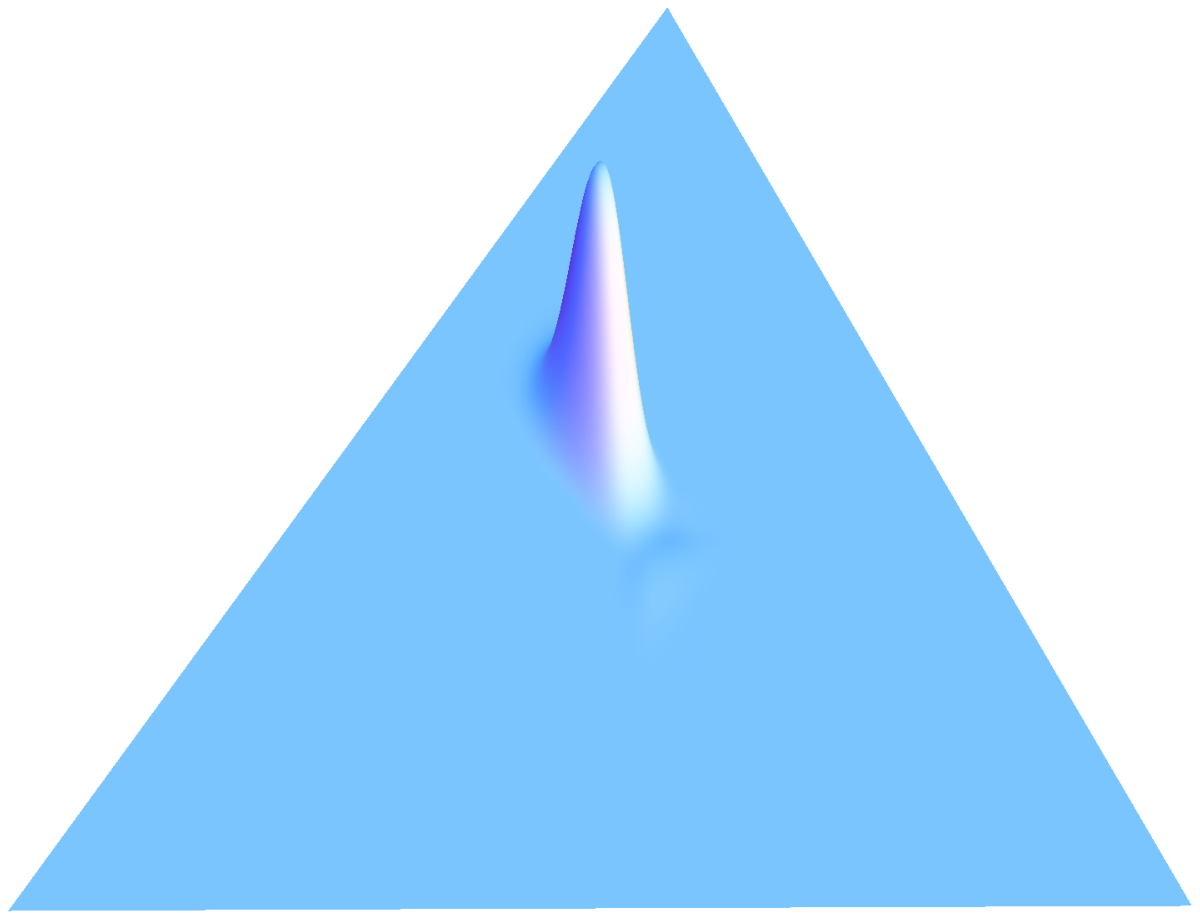} \\
$\phi_{\bfm{v}^{(1)};1}$ & $\phi_{\bfm{v}^{(1)};2}$ & $\phi_{\bfm{v}^{(1)};3}$ & $\phi_{\bfm{v}^{(1)};4}$\\
\includegraphics[width=3.6cm,clip]{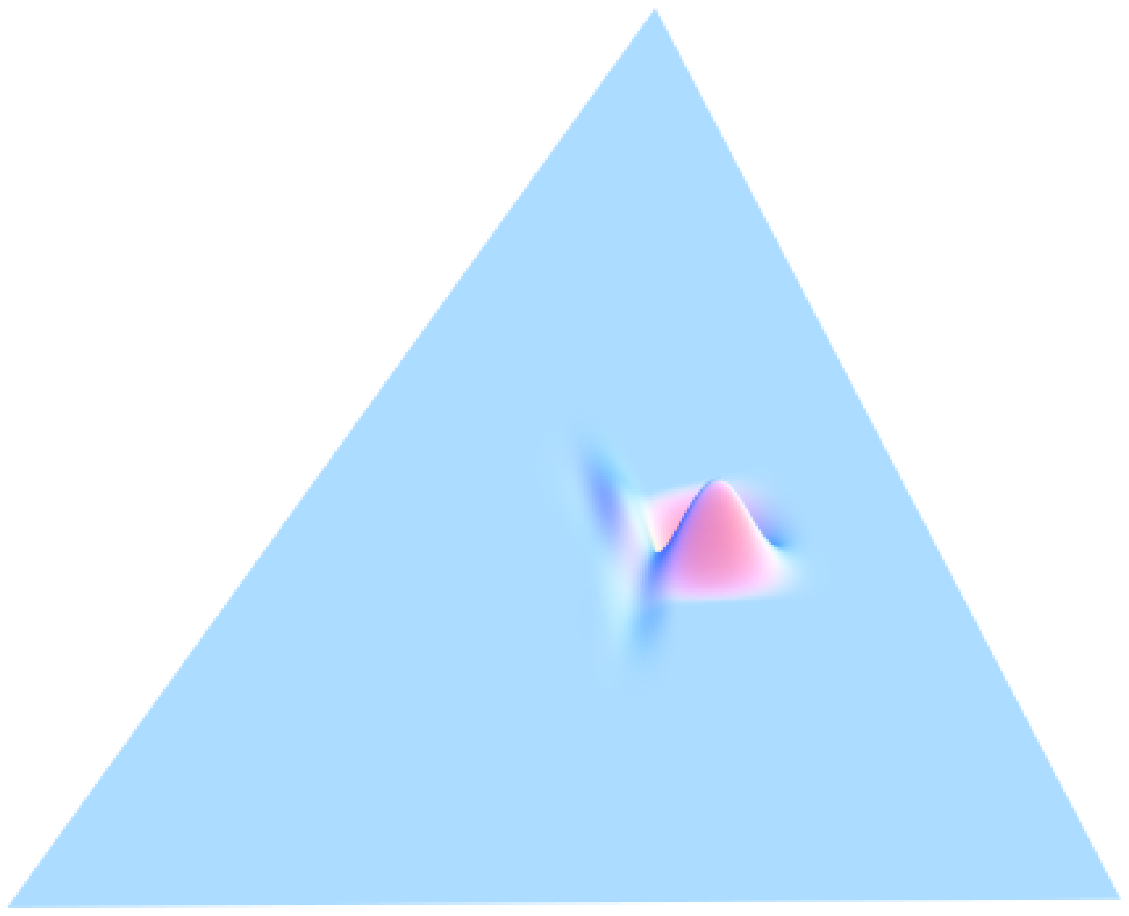}  &
\includegraphics[width=3.6cm,clip]{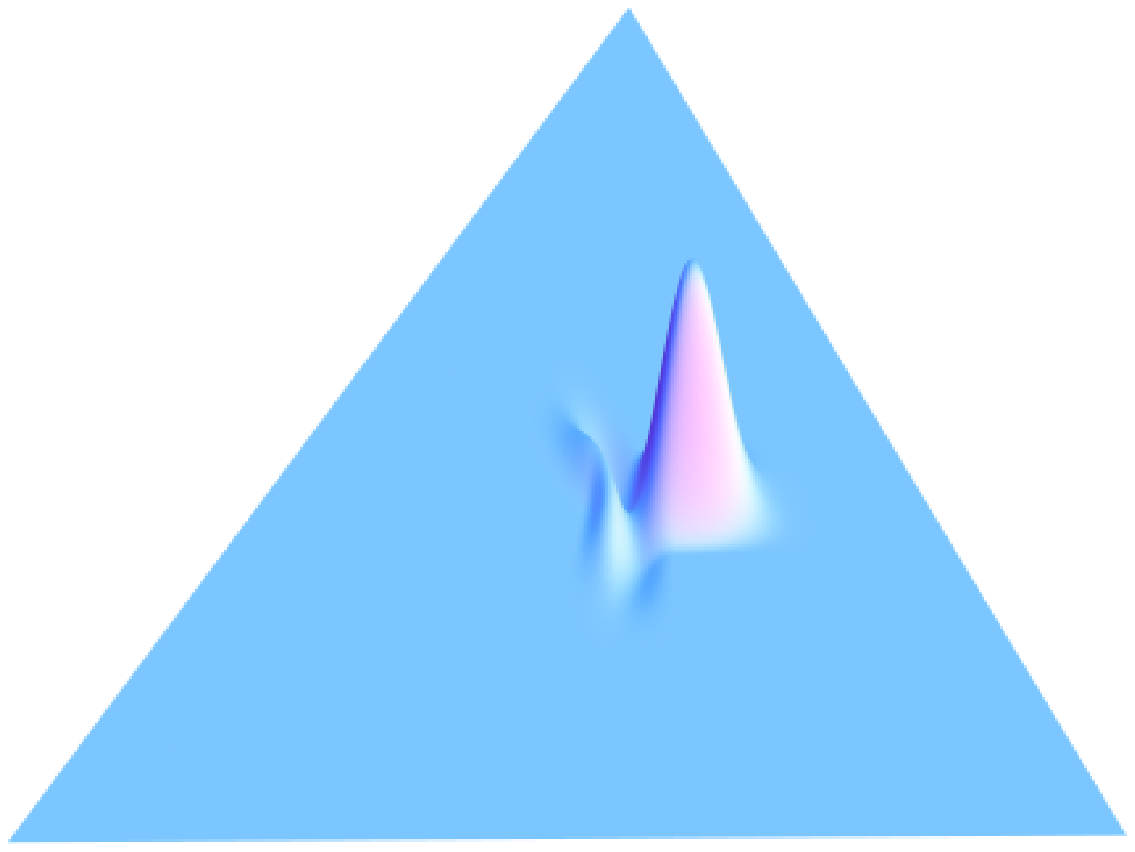} &
\includegraphics[width=3.6cm,clip]{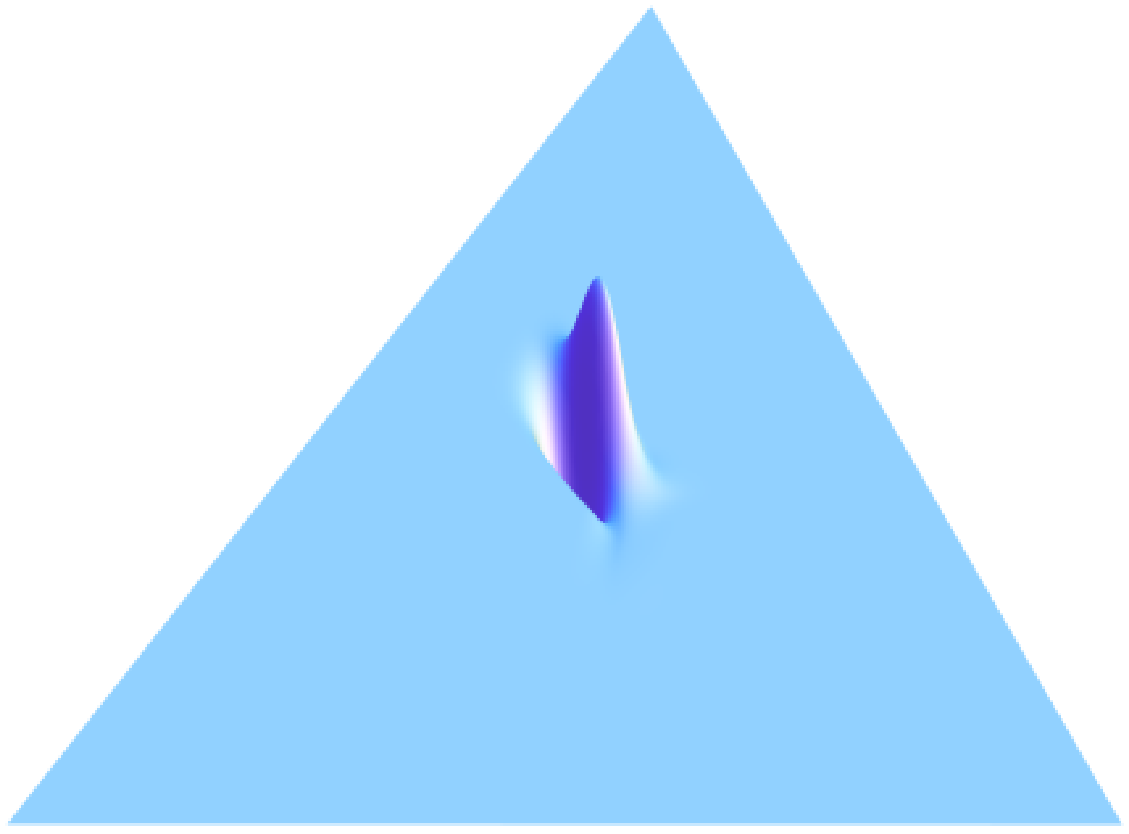} &
\includegraphics[width=3.6cm,clip]{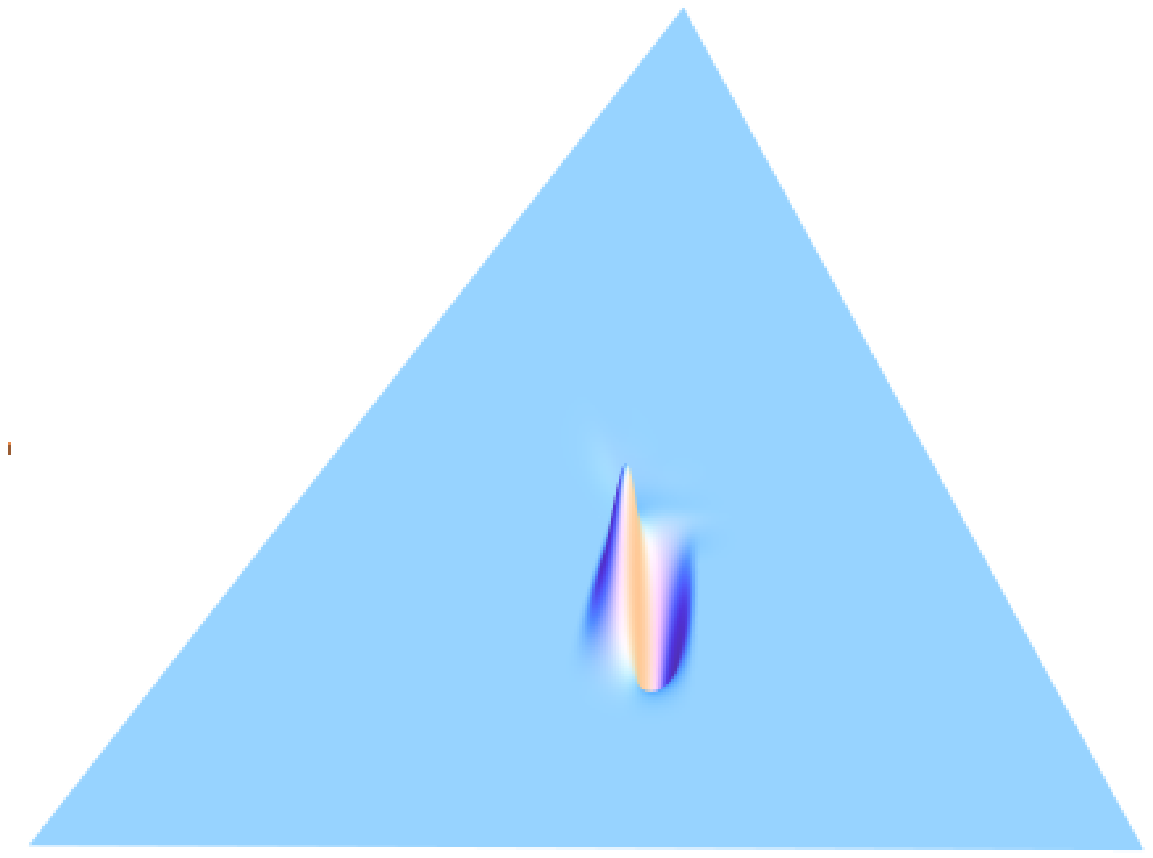} \\
$\phi_{\bfm{v}^{(1)};5}$ & $\phi_{\bfm{v}^{(1)};6}$ & $\phi_{\bfm{v}^{(1)};7}$ & $\phi_{\bfm{v}^{(1)};8}$ \\
\includegraphics[width=3.6cm,clip]{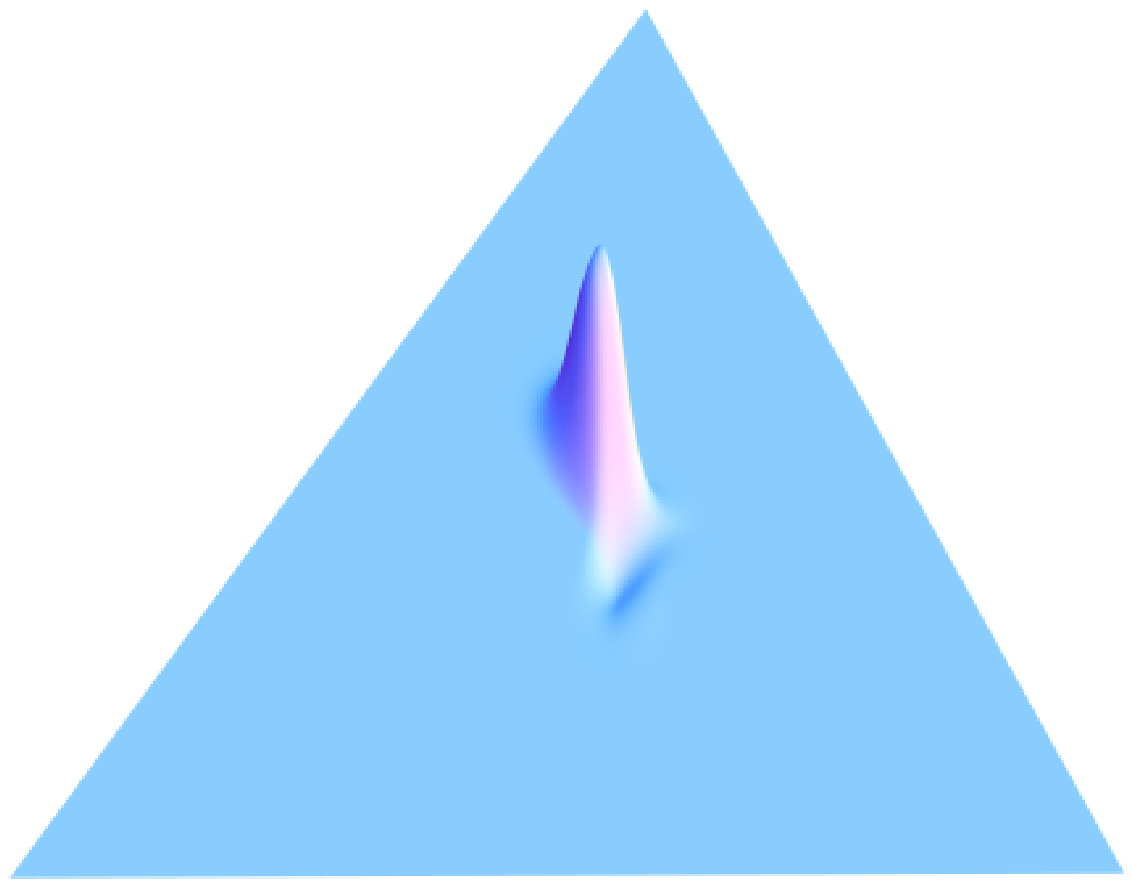}  &
\includegraphics[width=3.6cm,clip]{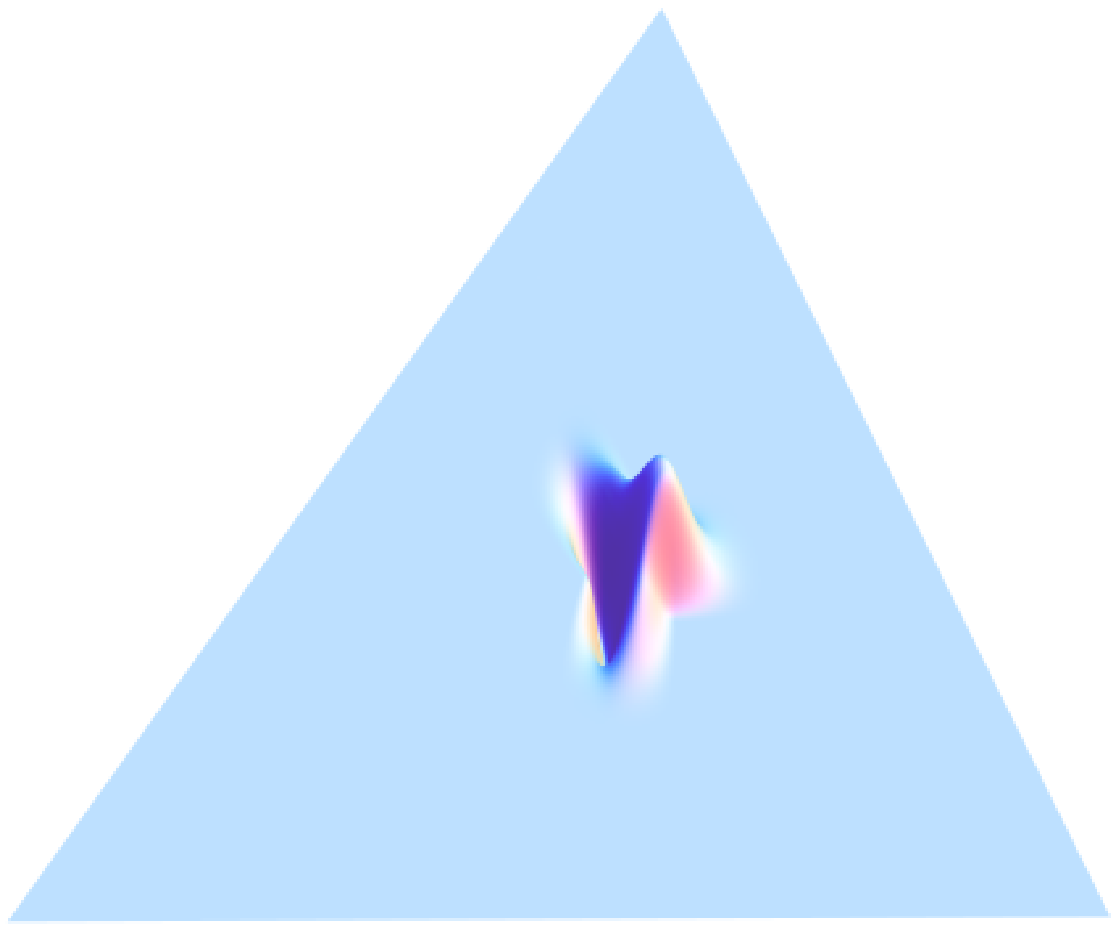} &
\includegraphics[width=3.6cm,clip]{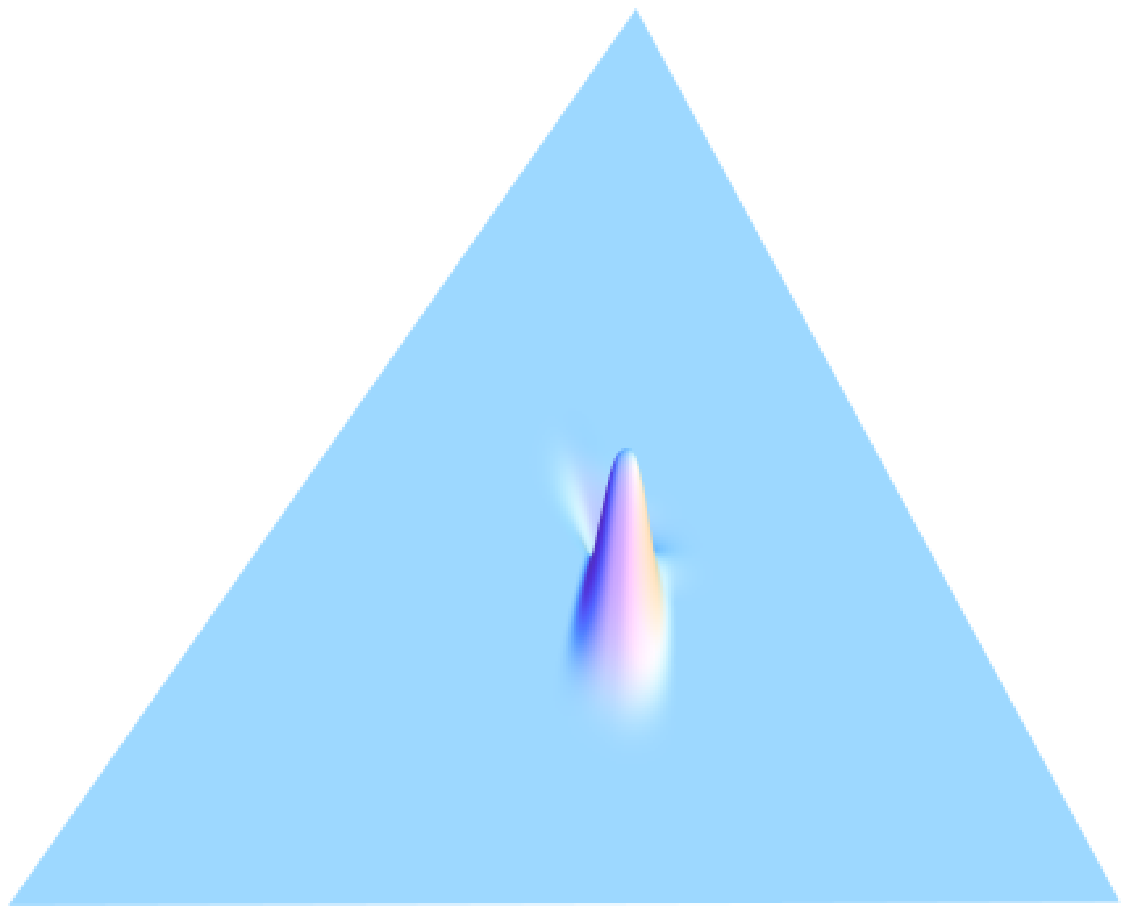} &
\includegraphics[width=3.6cm,clip]{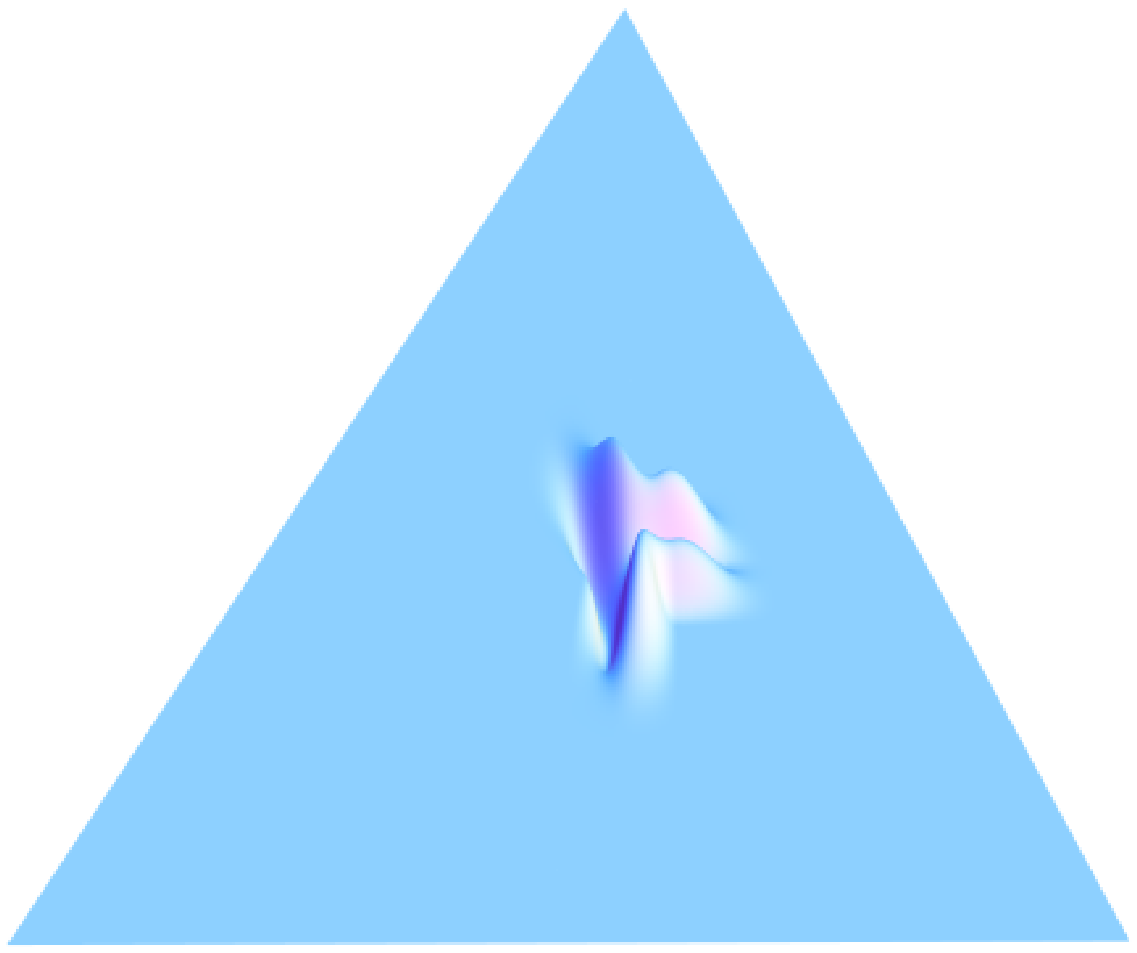} \\
$\phi_{\bfm{v}^{(1)};9}$ & $\phi_{\bfm{v}^{(1)};10}$ & $\phi_{\bfm{v}^{(1)};11}$ & $\phi_{\bfm{v}^{(1)};12}$ \\
\includegraphics[width=3.6cm,clip]{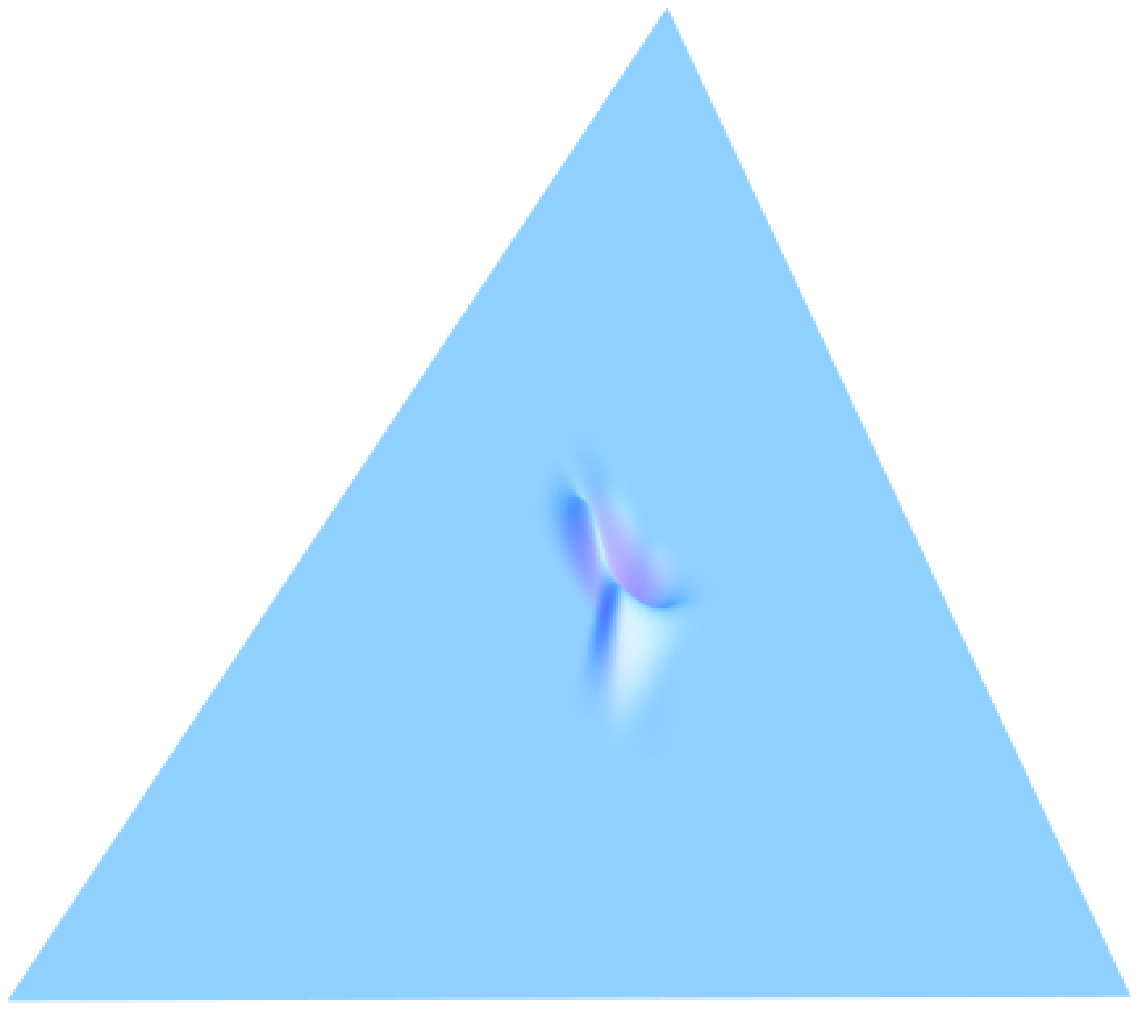}  &
\includegraphics[width=3.6cm,clip]{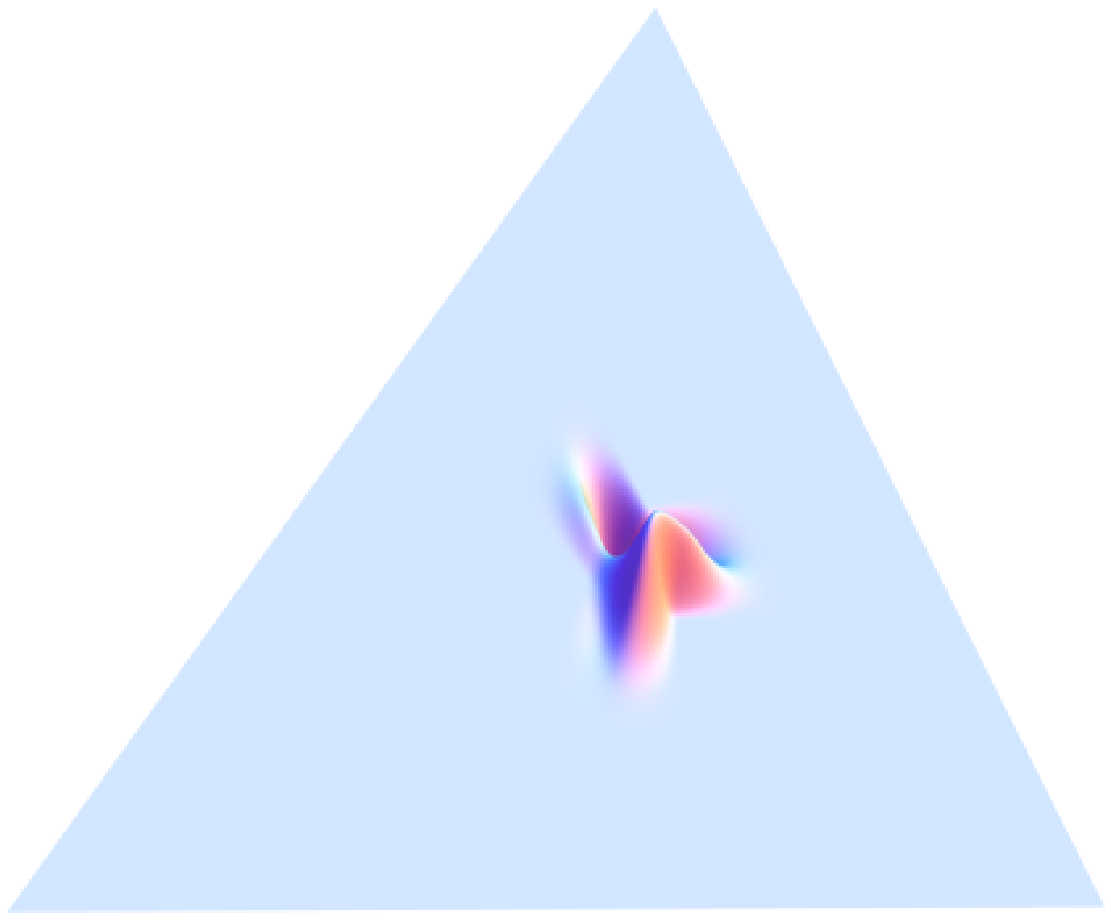} &
\includegraphics[width=3.6cm,clip]{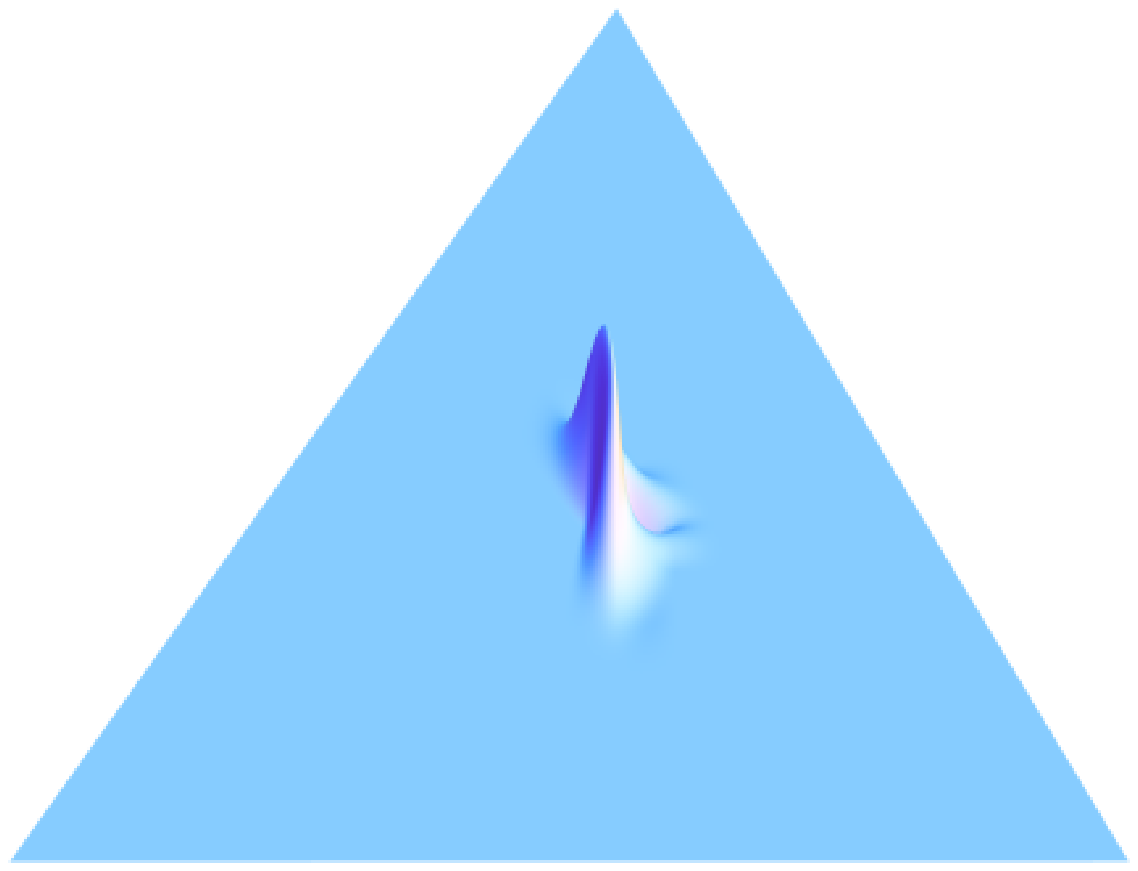} &
\includegraphics[width=3.6cm,clip]{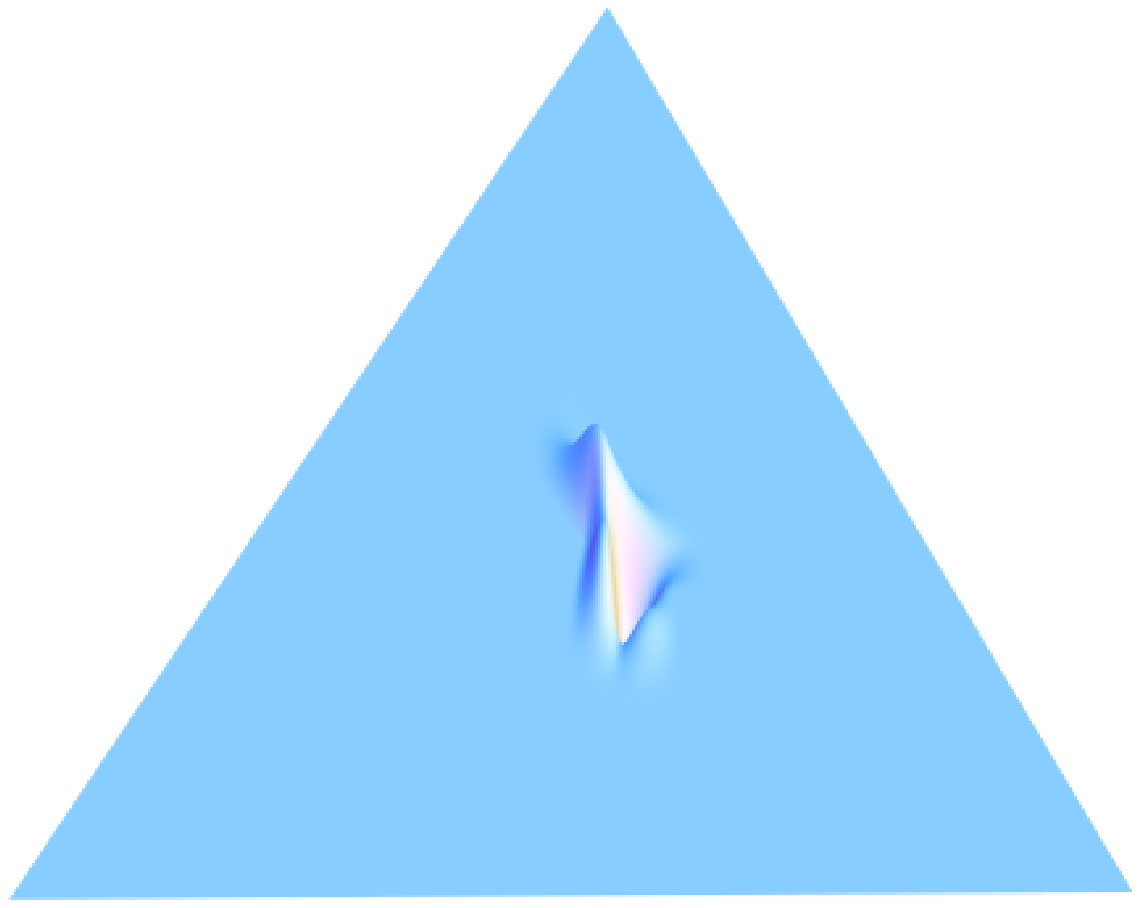} \\
$\phi_{\bfm{v}^{(1)};13}$ & $\phi_{\bfm{v}^{(1)};14}$ & $\phi_{\bfm{v}^{(1)};15}$ & $\phi_{\bfm{v}^{(1)};16}$ 
\end{tabular}
\begin{tabular}{ccc}
\includegraphics[width=3.6cm,clip]{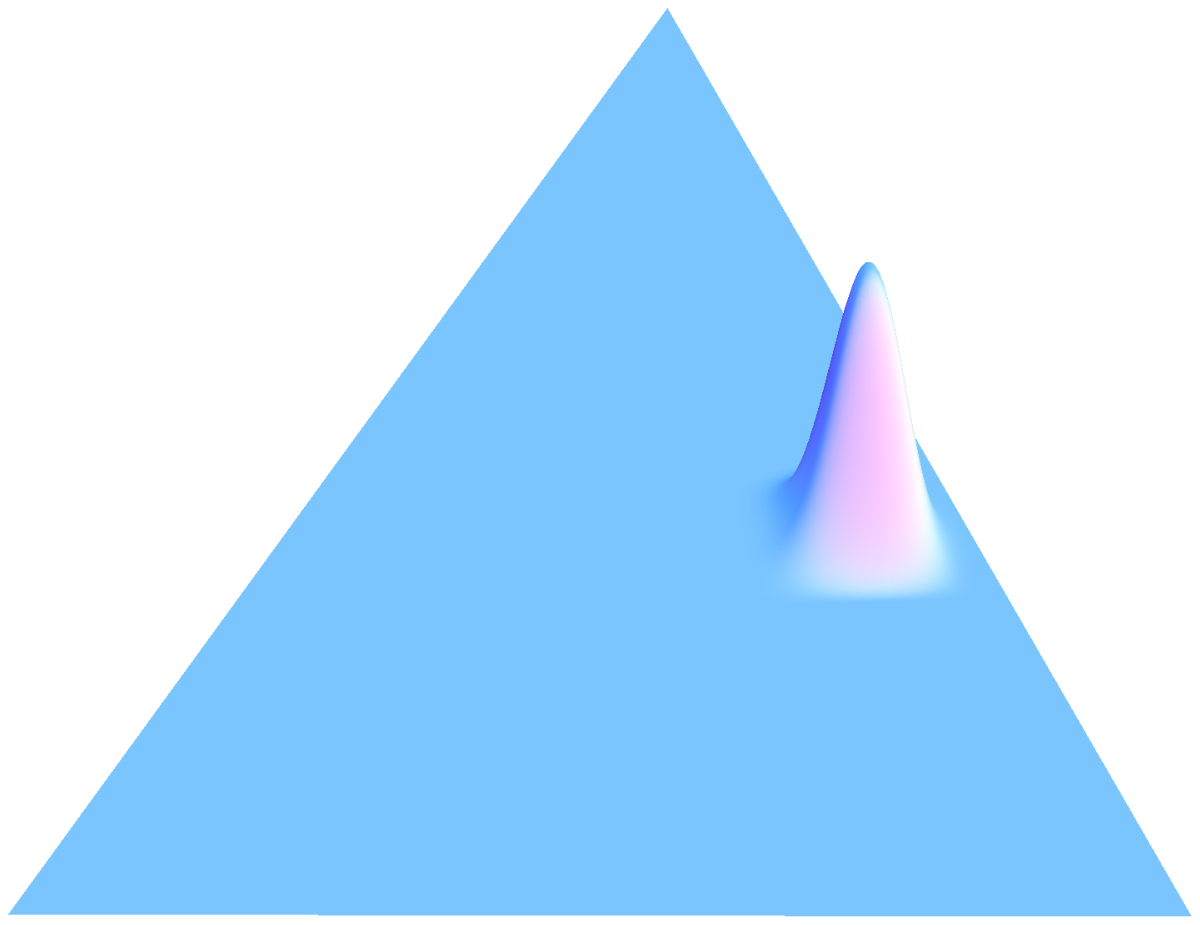} &
\includegraphics[width=3.6cm,clip]{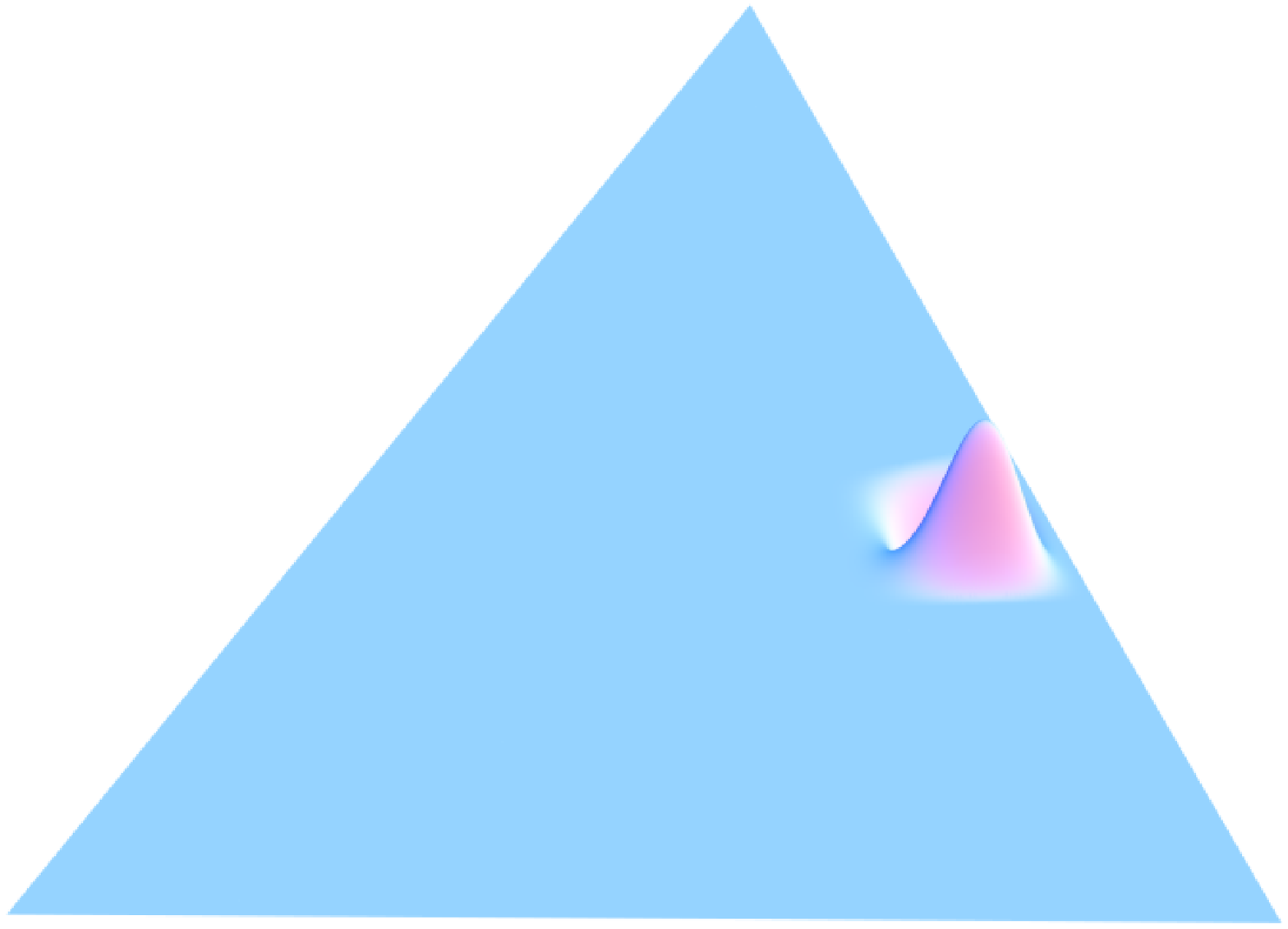} &
\includegraphics[width=3.6cm,clip]{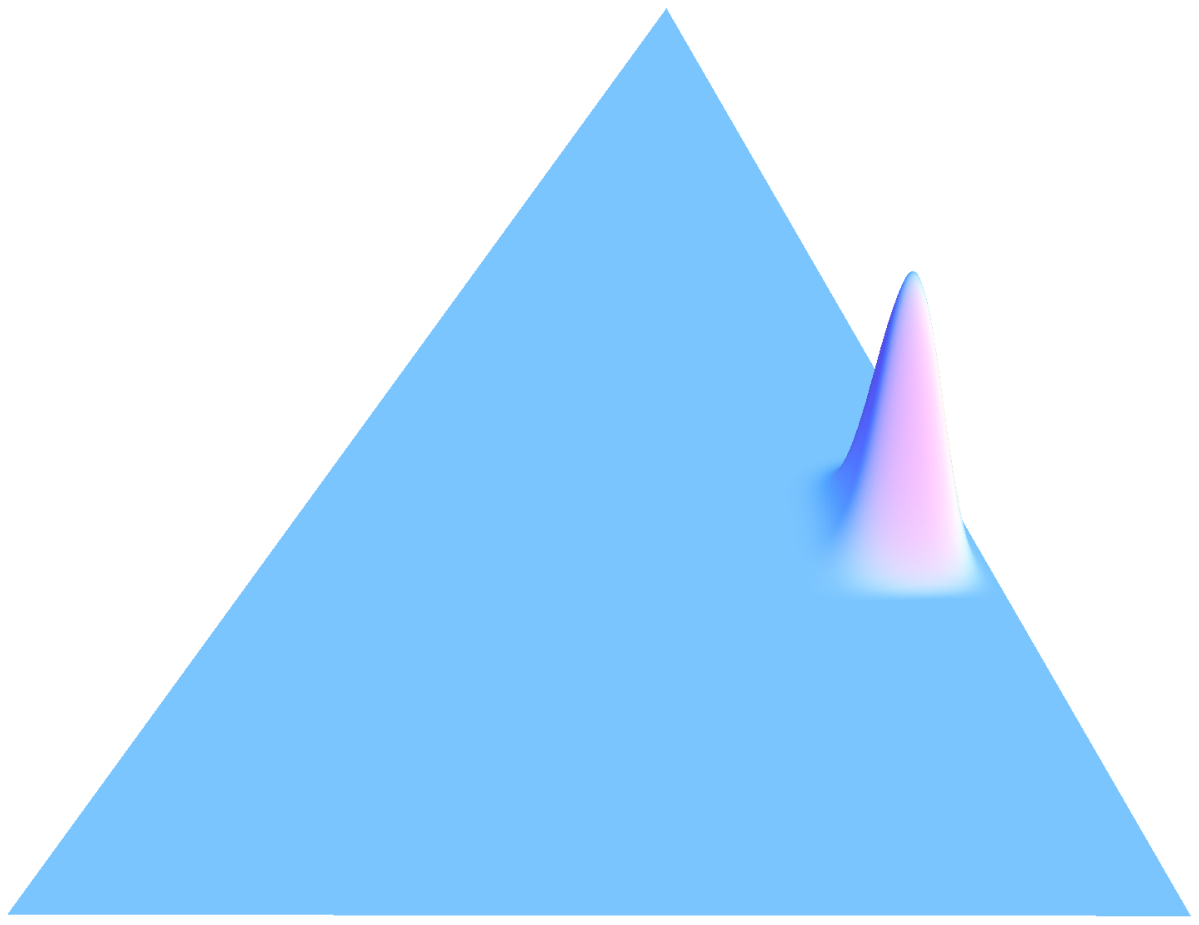} \\
$\phi_{\bfm{v}^{(2)};1}$ & $\phi_{\bfm{v}^{(2)};2}$ & $\phi_{\bfm{v}^{(2)};3}$ 
\end{tabular}
\caption{Graphs of the functions of the vertex spaces $\mathcal{W}_{0h;\bfm{v}^{(1)}}$ and $\mathcal{W}_{0h;\bfm{v}^{(2)}}$ for $p=5$, $r=2$ and 
$h=\frac{1}{6}$ (cf. Example~\ref{ex:functions}).}
\label{fig:ex_functions_vertex}
\end{figure}
\end{ex}

\begin{rem}
For the sake of simplicity we restricted ourselves to the case of bilinearly parameterized multi-patch domains. The construction of the space~$\W$ and of its basis 
should be extendable in a straightforward way to the class of bilinear-like geometries~\cite{KaVi17c}. However, the construction and the study of bilinear-like geometries themselves 
are limited to the case of two-patch domains~\cite{KaVi17c} so far. But an extension to the case of multi-patch domains is of vital interest for the future research.
\end{rem}

\section{Solving the triharmonic equation -- Examples} \label{sec:triharmonic_examples}

We present several examples to demonstrate the potential of our approach for solving the triharmonic equation over bilinear multi-patch domains. 

\begin{ex} \label{ex:example1}
We consider the three bilinearly parameterized multi-patch domains given in Figure~\ref{fig:example1}~(first row), which possess extraordinary vertices of valency~$3$, $5$ or $6$ {\MK and 
describe a triangular, pentagonal and hexagonal domain, respectively.} 
For all three domains (a)-(c), we construct nested isogeometric spline spaces 
$\W$ of degree~$p=5$ and regularity~$r=2$ for the mesh-sizes $h=\frac{1}{k+1}$, $k \in \{3,7,15,31 \}$. Note that for the case of $h=\frac{1}{4}$, the construction of the 
space $\W$ has to be slightly modified. More precisely, the vertex subspace $\mathcal{W}_{0h;\bfm{v}^{(\rho)}}$ is constructed without the use of the 
functions~$\phi_{\Gamma^{(\ell)};0,4}$. Instead, these functions are added to the corresponding edge subspaces~$\mathcal{W}_{0h;\Gamma^{(\ell)}}$ after subtracting suitable linear 
combinations of 
functions $\phi_{\Gamma^{(\ell)};i,j}$, $0 \leq i \leq 2$, $0 \leq j \leq \min(4-i,3)$ to obtain functions~$\widehat{\phi}_{\Gamma^{(\ell)};0,4}$ which have 
vanishing values, gradients and Hessians on $\partial (\Omega^{(\ell-1)} \cup \Omega^{(\ell)} )$. 

We solve the triharmonic equation~\eqref{eq:triharmonic_problem} with the homogeneous boundary conditions~\eqref{eq:triharmonic_problem_boundary} over 
the domains~(a)-(c) for right side functions~$f$ obtained by the exact solutions
\[ \footnotesize
 u_{a}(\ab{x}) = \left(\frac{1}{20} x_2 (\frac{12 x_1}{13}-x_2)(\frac{120 - 12 x_1}{7} -x_2) \right)^3,
\]
\[ \footnotesize 
 u_{b}(\ab{x}) = \left(\frac{1}{20000} (\frac{121 +8 x_1}{15} - x_2)( \frac{7 x_1}{2} + x_2)x_2(\frac{52}{3} - \frac{13 x_1}{6} + 
    x_2)( \frac{31}{2} - \frac{9 x_1}{11} - x_2) \right)^3
\]
and
\[ \footnotesize
 u_{c}(\ab{x}) = \left( \frac{1}{200000}(\frac{55- 5 x_1}{2} - x_2)(\frac{1799}{160} - \frac{7 x_1}{64} - x_2)(\frac{652 +78 x_1}{61} - x_2)
 (-\frac{18 x_1}{11} - x_2)x_2(\frac{5 x_1}{3} -10 - x_2)\right)^3,
 \]
see Fig.~\ref{fig:example1}~(second row). The resulting relative $H^{i}$-errors, $i=0,1,2,3$, are visualized in Fig.~\ref{fig:example1}~(third row) and indicate convergence rates 
of order $\mathcal{O}(h^{6-i})$ in the corresponding norms.~\footnote{Note that for the spaces~$\W$ the norms $|| \cdot ||_{H^3(\Omega)}$ and 
$||\nabla \triangle (\cdot) ||_{L^2(\Omega)}$ are equivalent.}{} {\MK Furthermore, Fig.~\ref{fig:example1}~(fourth row) shows the resulting condition numbers $\kappa$ of the 
stiffness matrices~$S$ by using diagonally scaling (cf. \cite{Br95}) and by employing no preconditioner. In case of the non-preconditioned stiffness matrices, 
the errors are slightly higher, but for both cases} 
the estimated growth rates are of order $\mathcal{O}(h^{-6})$, which demonstrate that the constructed basis functions are well-conditioned.

\begin{figure}[htp]
\centering \footnotesize
\begin{tabular}{ccc}
(a) & (b) & (c)\\
\includegraphics[width=4.8cm,clip]{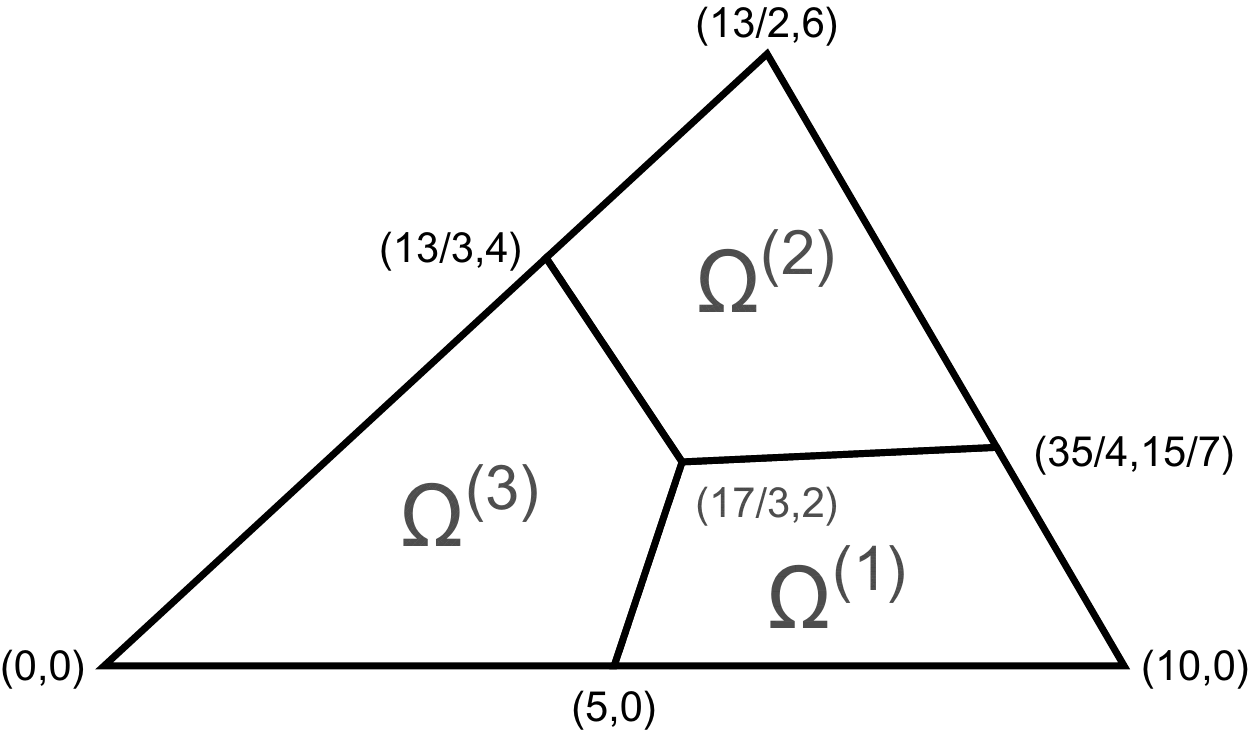} &  
\includegraphics[width=4.8cm,clip]{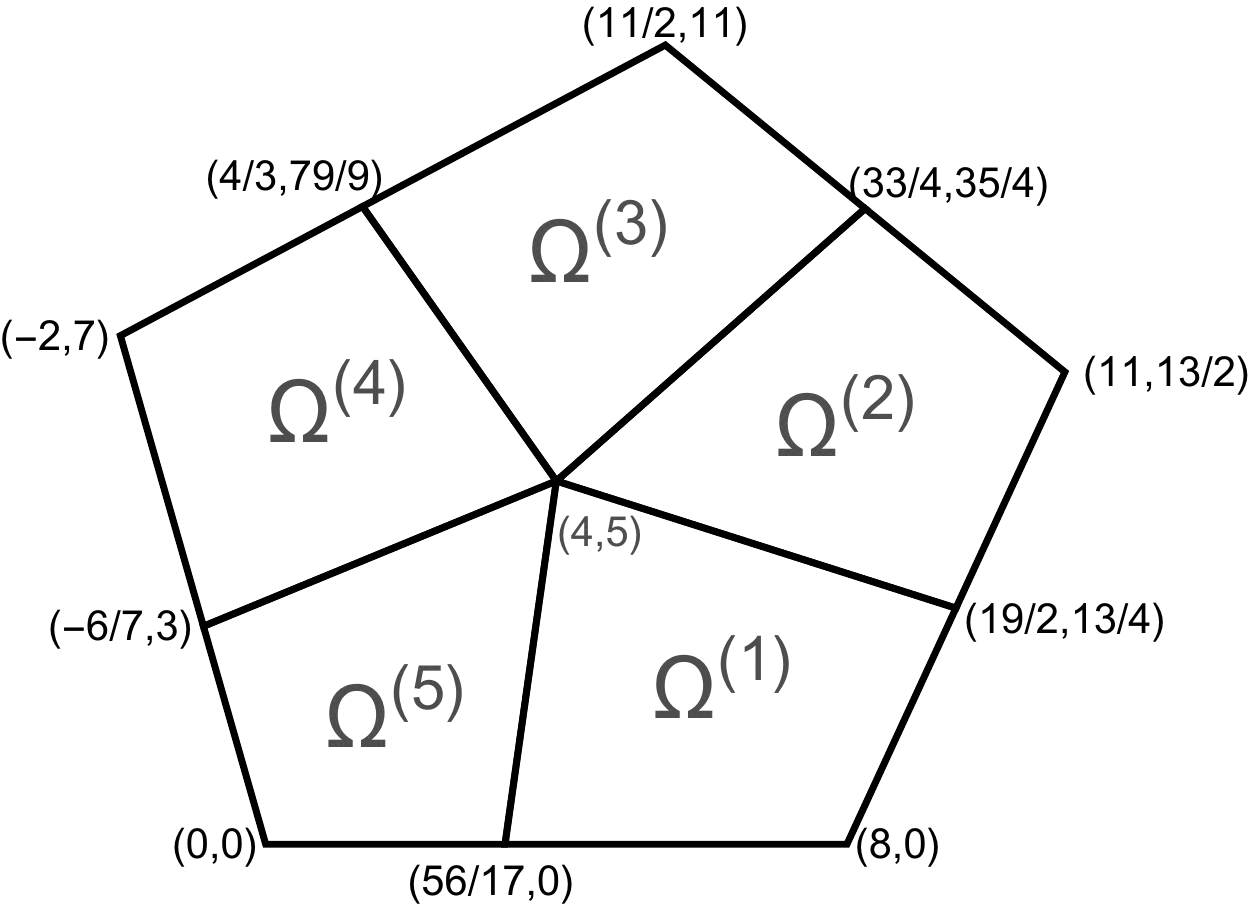} & 
\includegraphics[width=4.8cm,clip]{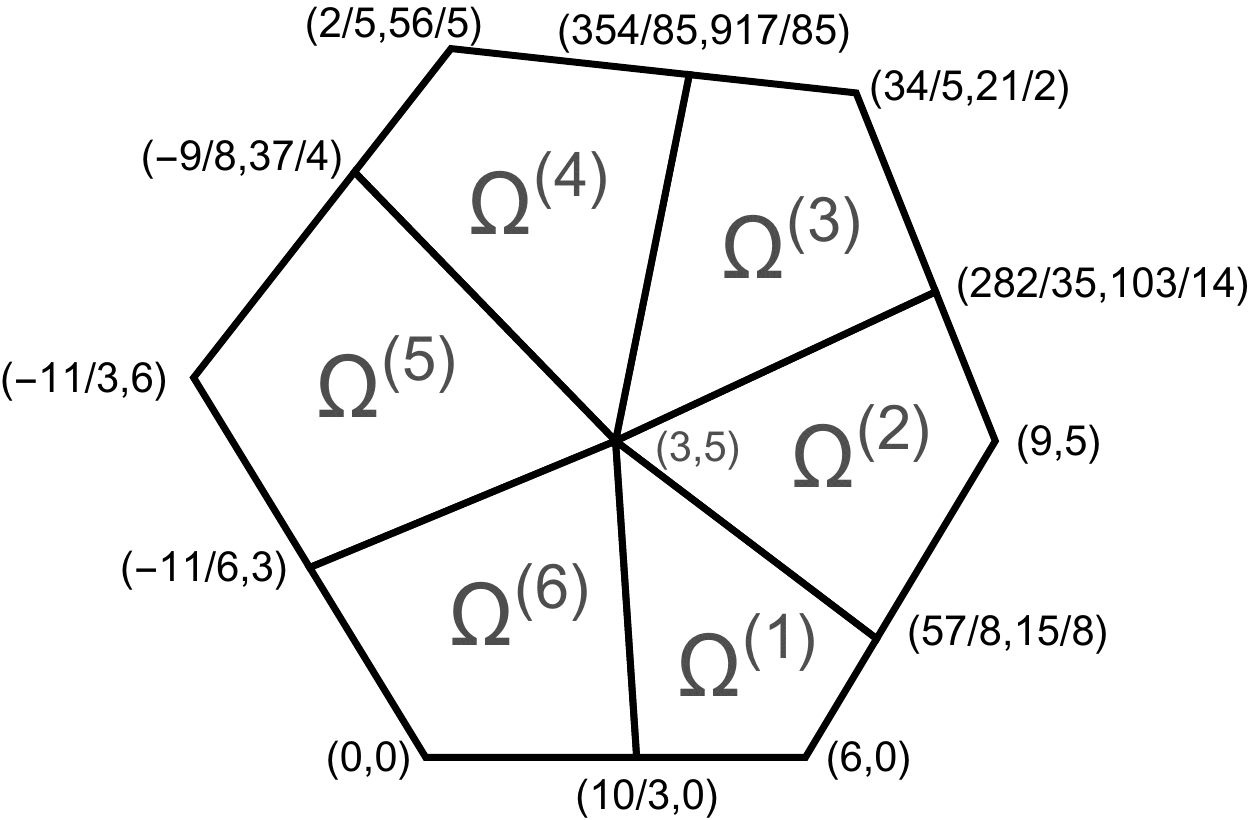} \\ \\
\multicolumn{3}{c}{Computational domains~$\Omega$} \\[0.3cm]
\includegraphics[width=4.8cm,clip]{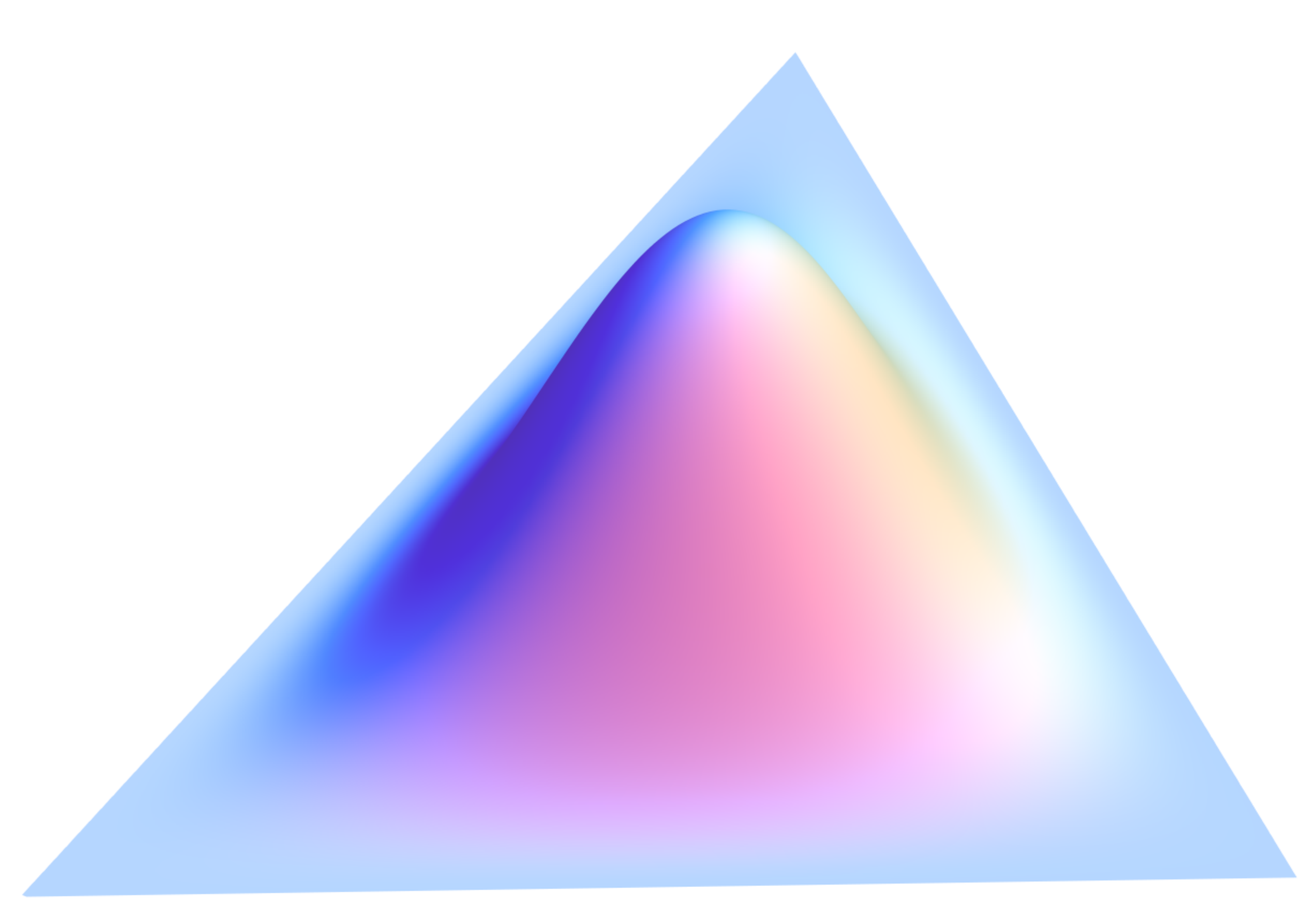} &  
\includegraphics[width=4.8cm,clip]{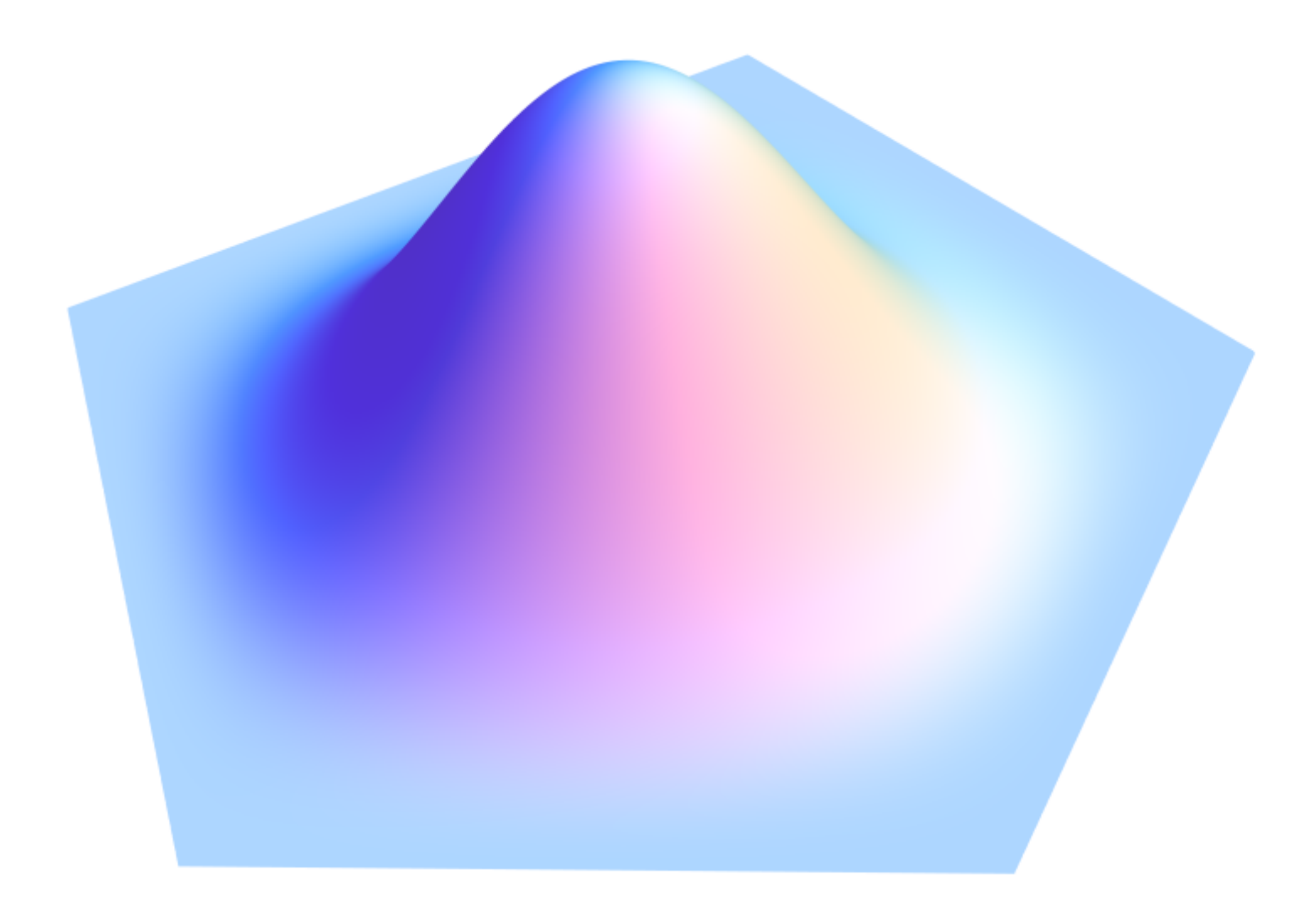} & 
\includegraphics[width=4.8cm,clip]{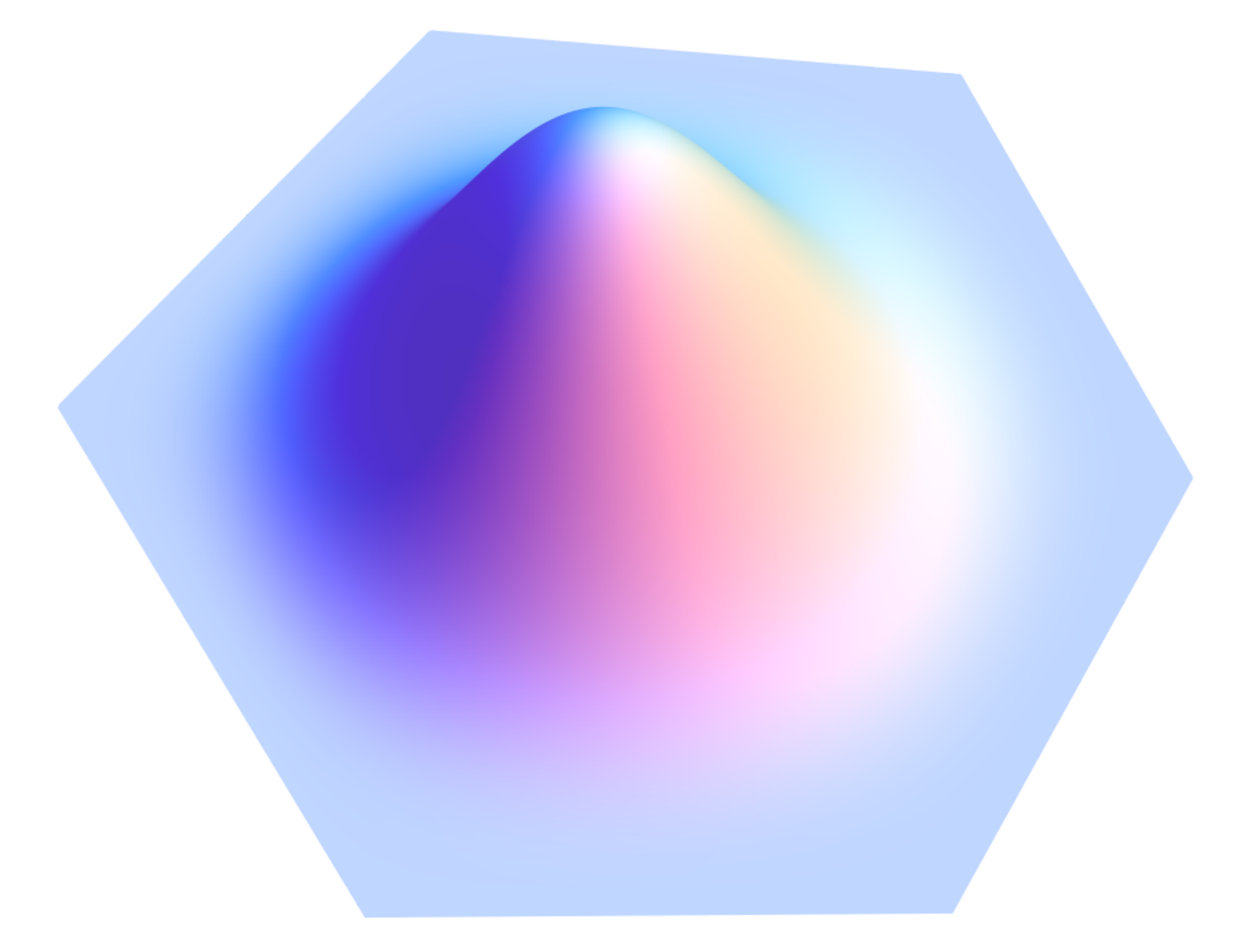} \\
\multicolumn{3}{c}{Exact solutions} \\[0.3cm]
\includegraphics[width=4.8cm,clip]{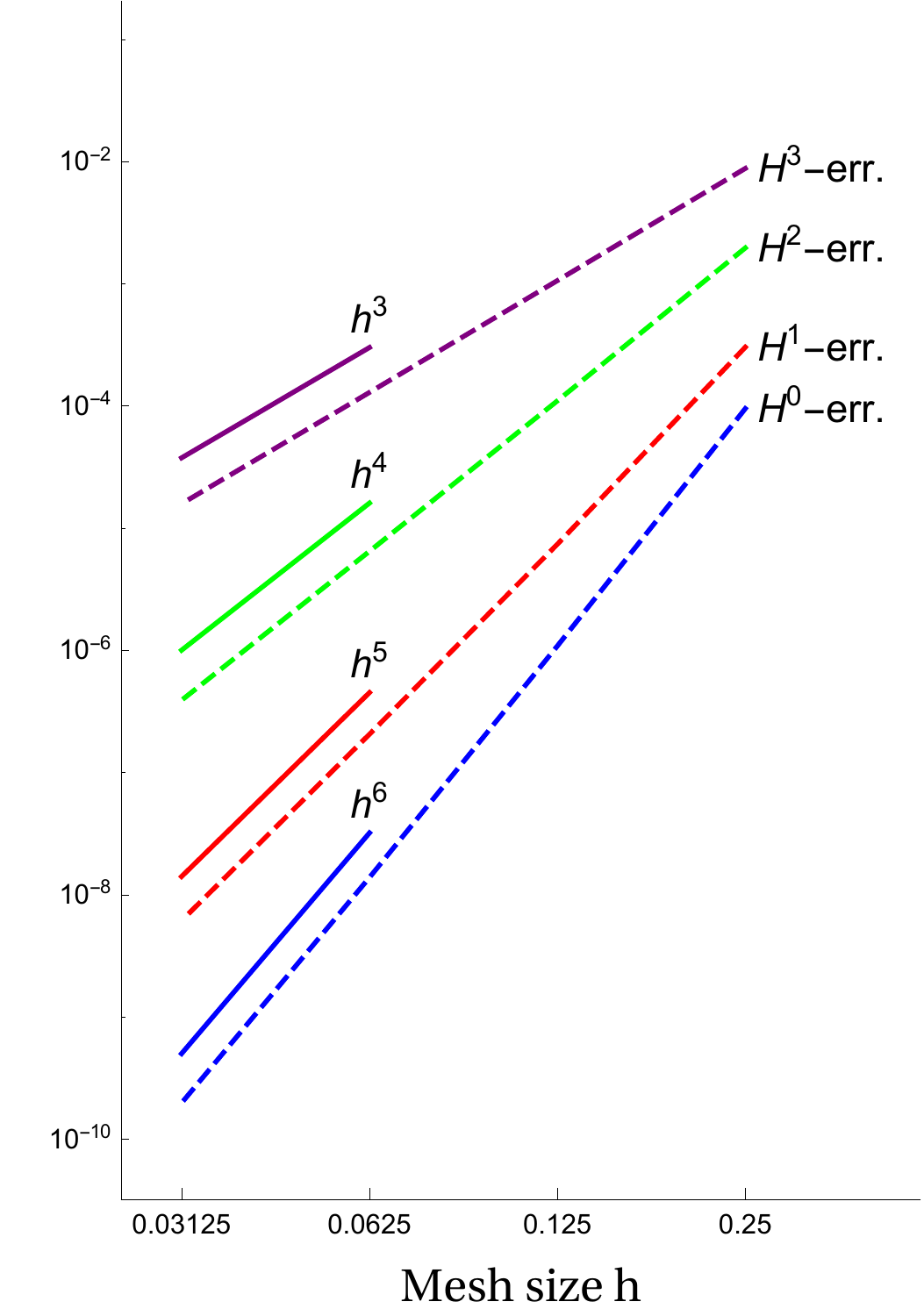} &  
\includegraphics[width=4.8cm,clip]{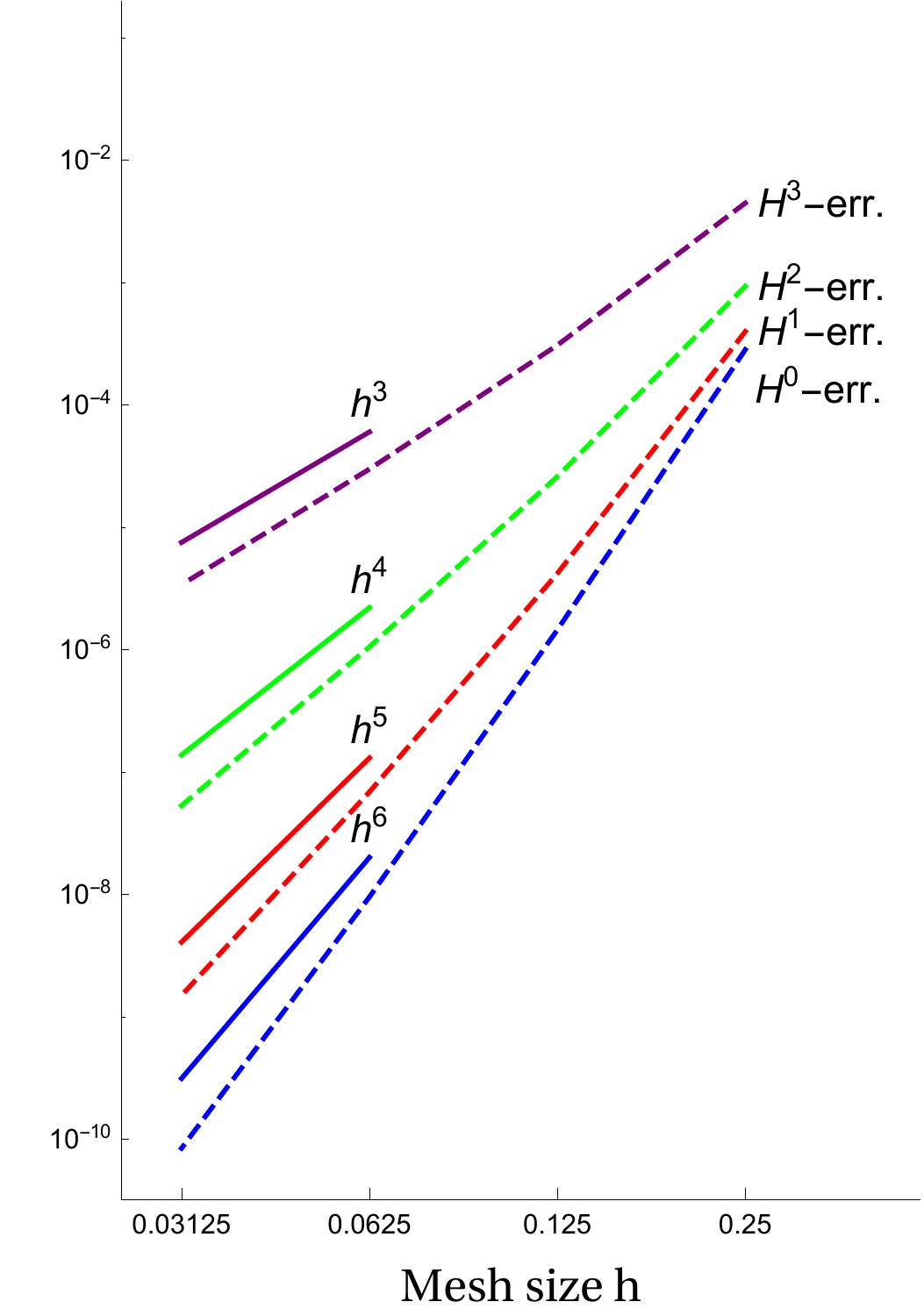} & 
\includegraphics[width=4.8cm,clip]{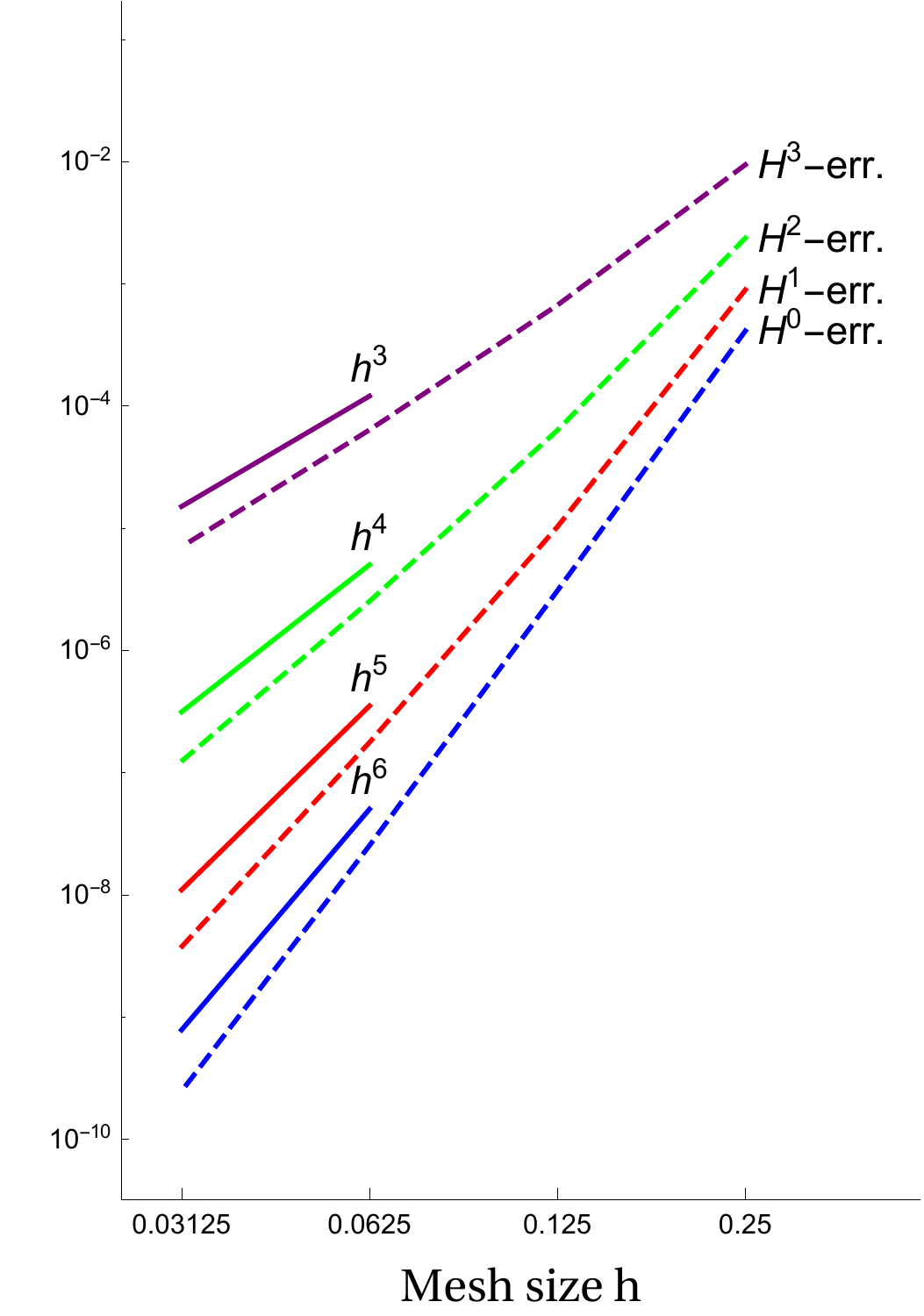} \\
\multicolumn{3}{c}{Relative $H^i$-errors, $i=0,\ldots,3$} \\[0.3cm]
\includegraphics[width=4.8cm,clip]{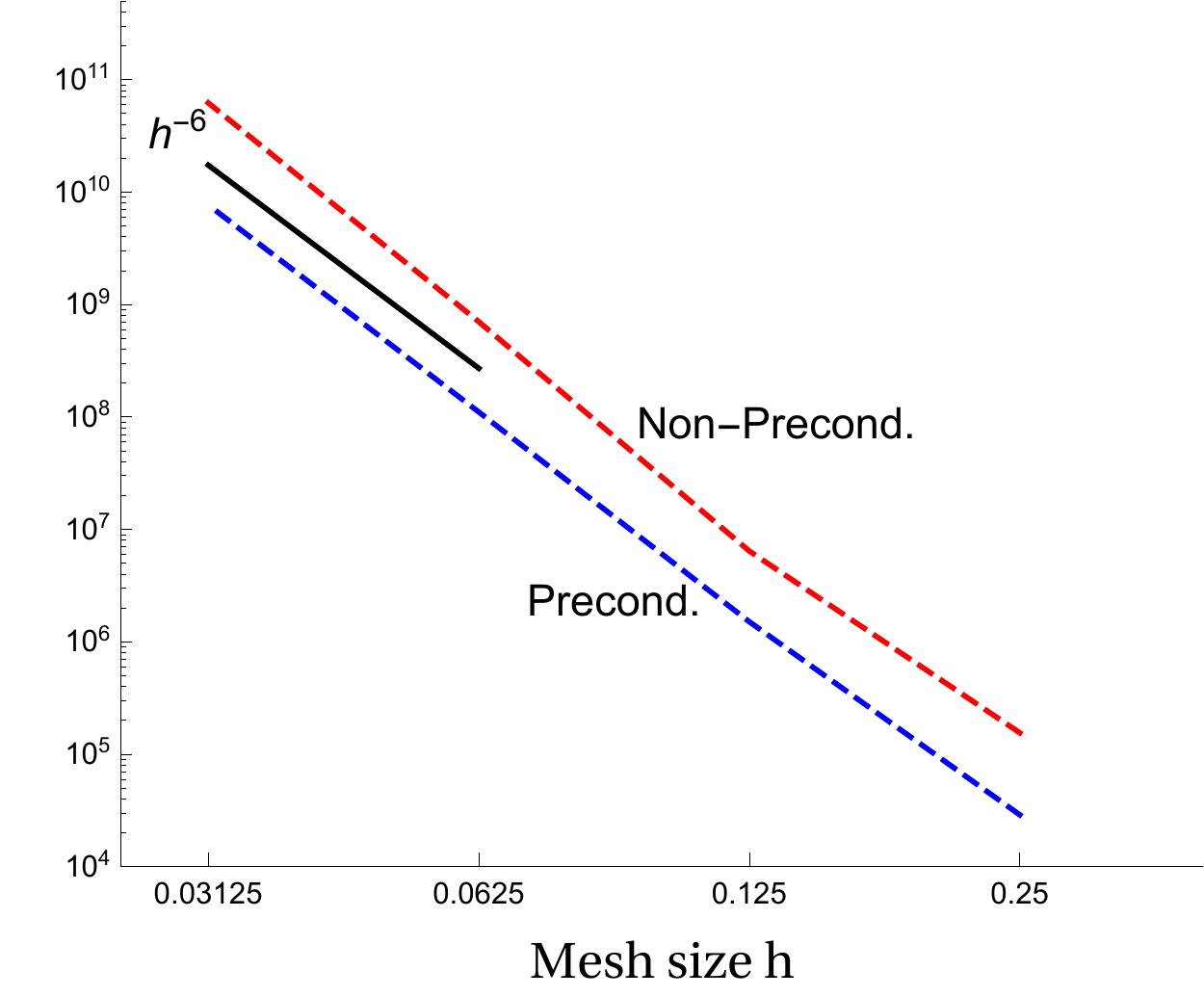} &  
\includegraphics[width=4.8cm,clip]{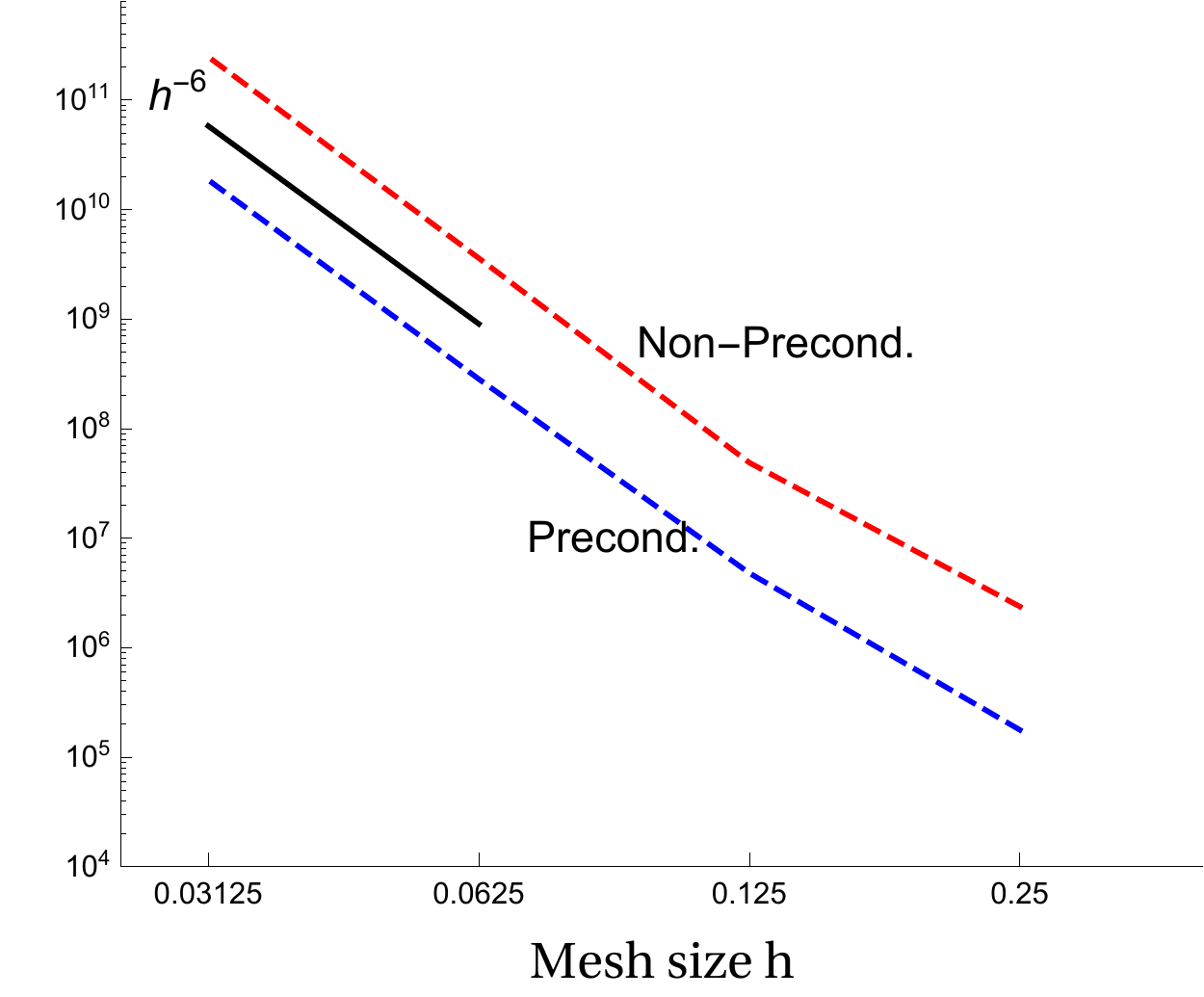} & 
\includegraphics[width=4.8cm,clip]{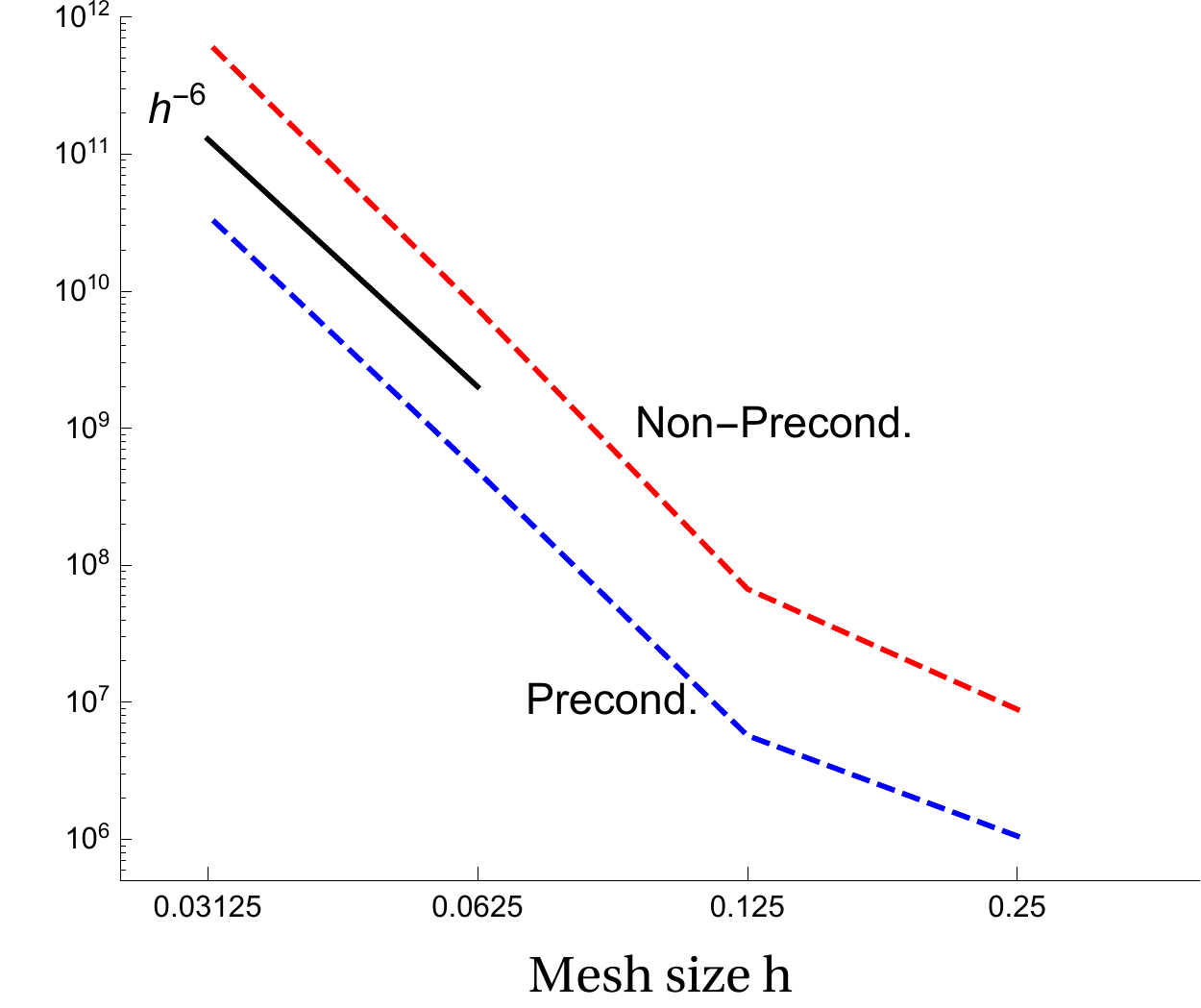} \\
\multicolumn{3}{c}{Condition numbers~$\kappa$ of the {\MK preconditioned and non-preconditioned} stiffness matrices~$S$} 
\end{tabular}
\caption{Solving the triharmonic equation over different multi-patch domains~$\Omega$ (cf. Example~\ref{ex:example1}).}
\label{fig:example1}
\end{figure}
\end{ex}

\begin{ex} \label{ex:example2} 
We consider the bilinearly parameterized five-patch domain with four extraordinary vertices of valency~$3$, which is visualized in Fig.~\ref{fig:example2} (first row). 
{\MK For the mesh-sizes $h=\frac{1}{k+1}$, $k \in \{3,7,15,31 \}$, nested isogeometric spline spaces $\W$ of degree~$p=5,6$ and regularity~$r=2$ (for $p=5,6$) and $r=3$ (for $p=6$) are 
generated. As in Example~\ref{ex:example1}, the construction of the space~$\W$ has to be slightly changed for the case $p=5$, $r=2$ and $h=\frac{1}{4}$. The resulting spaces} 
are used to solve the triharmonic equation~\eqref{eq:triharmonic_problem} with the homogeneous boundary conditions~\eqref{eq:triharmonic_problem_boundary}. 
We use for testing the right side function~$f$ which is obtained by the exact solution
\[
 u(\ab{x}) = (\frac{1}{20000}x_2 (\frac{405}{8} - \frac{27 x_1}{8} - x_2)(\frac{425}{38} + \frac{4 x_1}{19} - x_2)(\frac{23 x_1}{3} -
    x_2))^3 ,
\]
see Fig.~\ref{fig:example2} (first row). {\MK The resulting relative $H^i$-errors are of order $\mathcal{O}(h^{p+1-i})$,} and 
the estimated growth rates of the diagonally scaled stiffness matrices~$S$ are of order~$\mathcal{O}(h^{-6})$. {\MK As in Example~\ref{ex:example1}, we also present 
the condition numbers of the non-preconditioned stiffness matrices~$S$, see Fig.~\ref{fig:example2}, which are again slightly higher than for the preconditioned case 
(i.e. using diagonal scaling) but still seems to grow of order~$\mathcal{O}(h^{-6})$. This indicates again that the constructed basis functions are well-conditioned.}
\begin{figure}[htp]
\centering \footnotesize
\begin{tabular}{cc}
\includegraphics[width=6.6cm,clip]{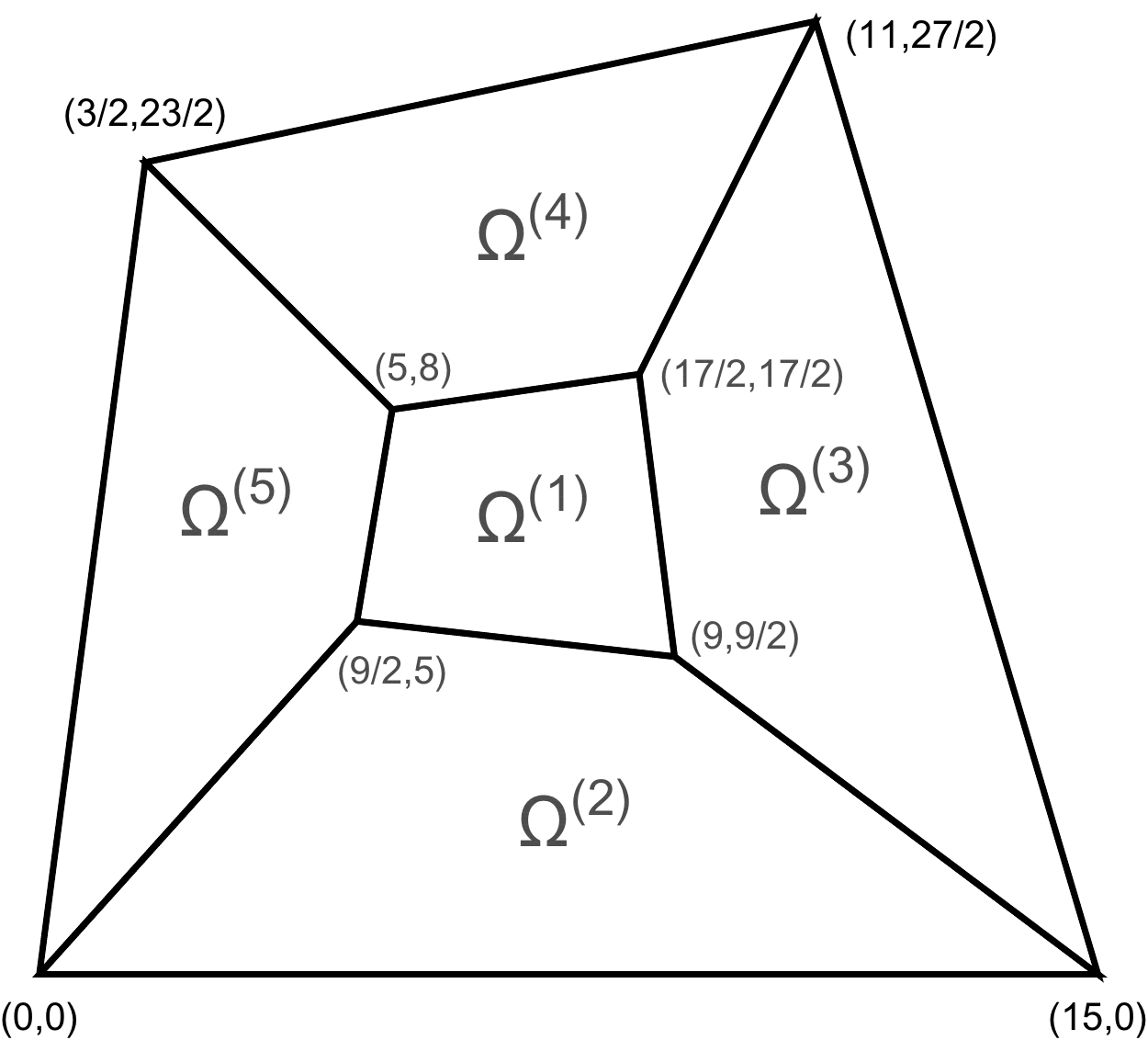} &  \includegraphics[width=6.6cm,clip]{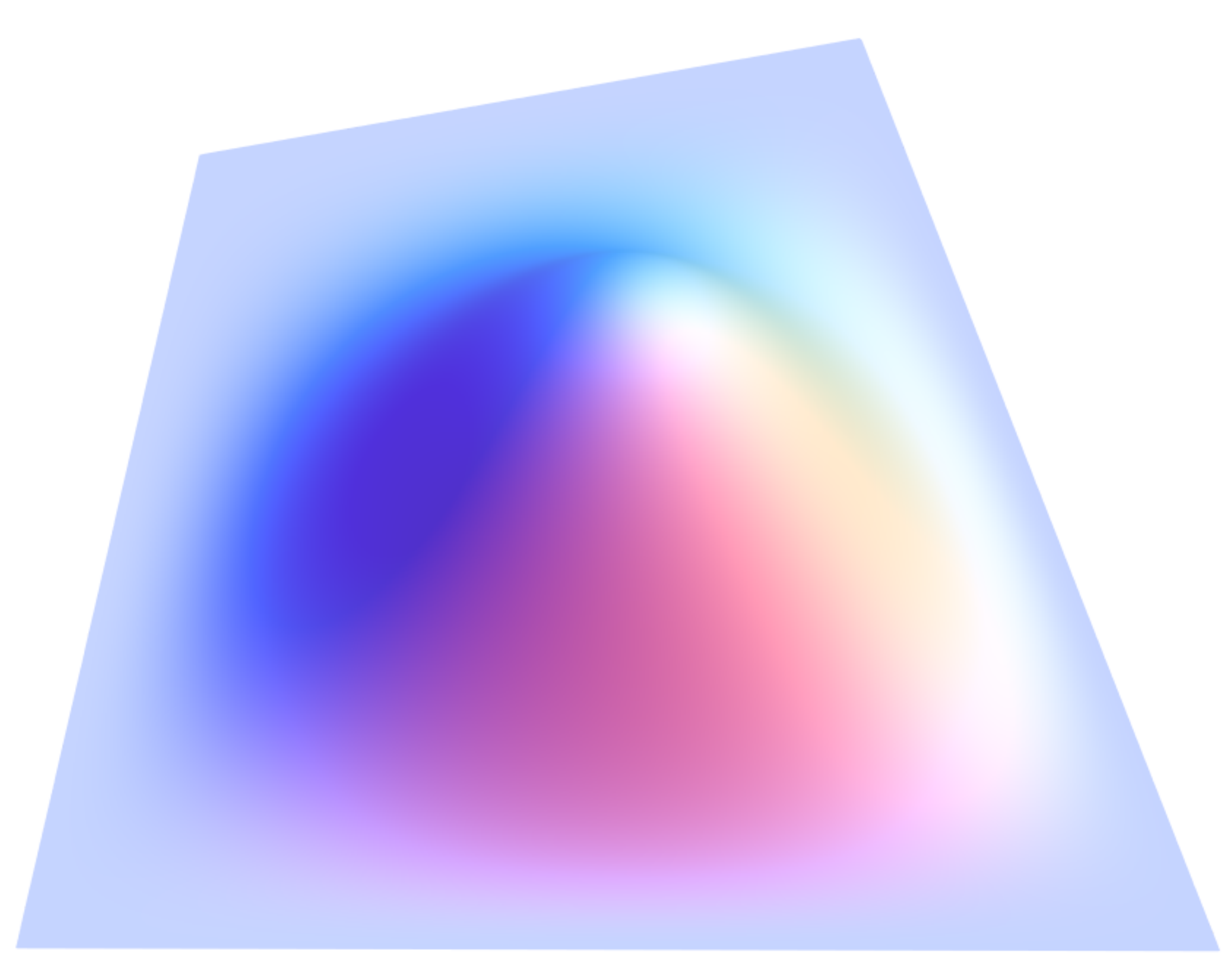} \\
Computational domain & Exact solution
\end{tabular}
\begin{tabular}{ccc}
\includegraphics[width=4.9cm,clip]{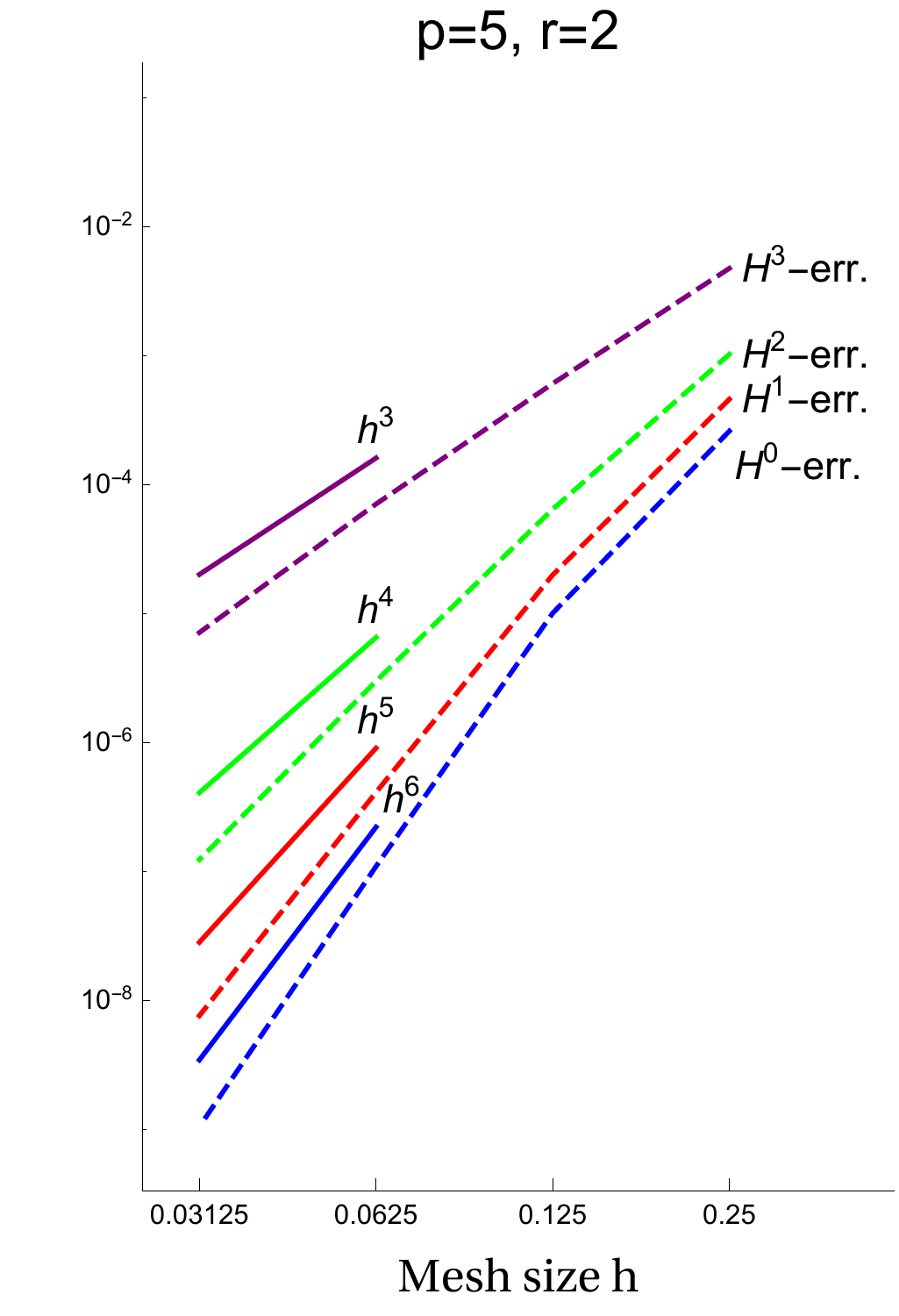} &
\includegraphics[width=4.9cm,clip]{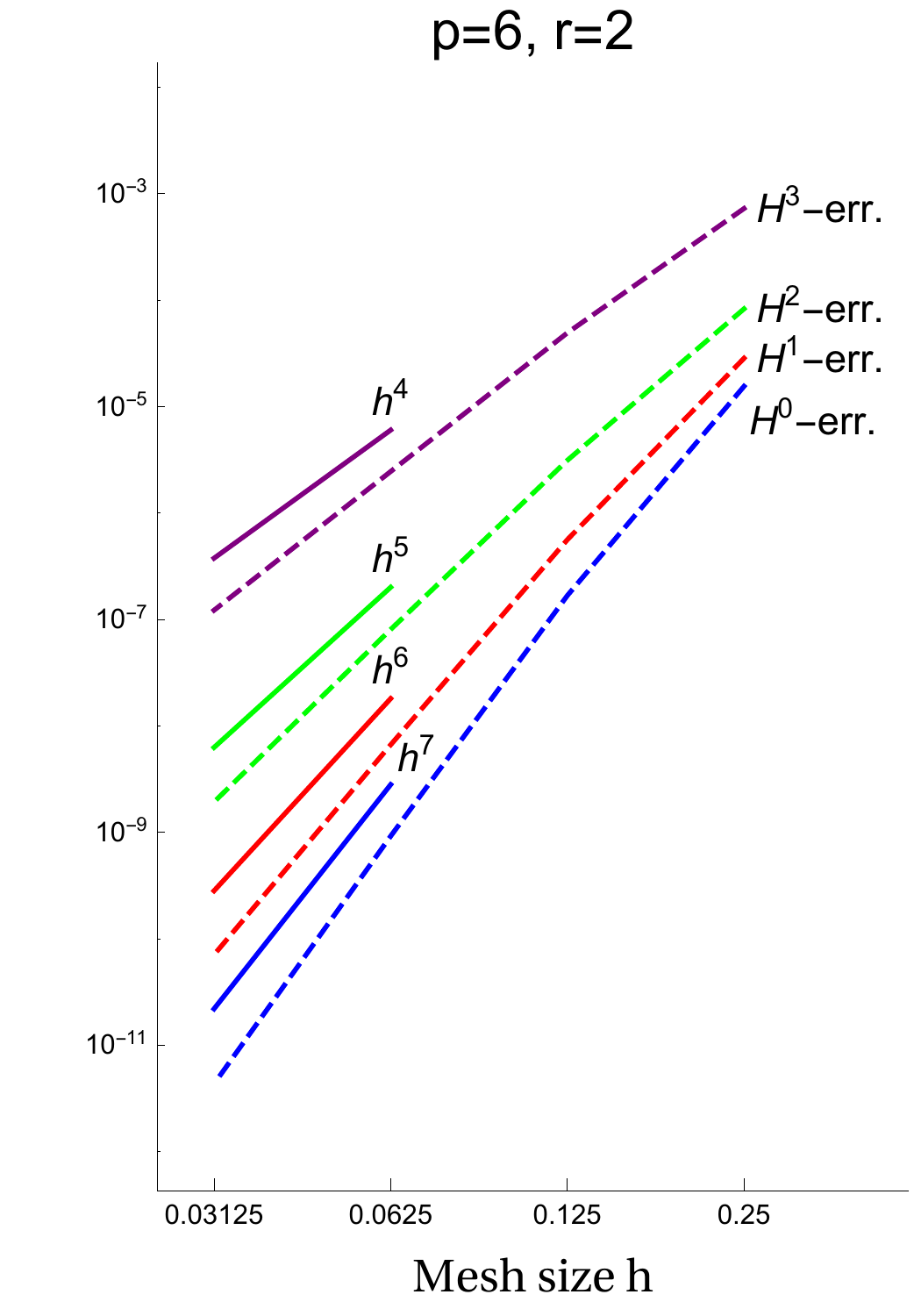} &
\includegraphics[width=4.9cm,clip]{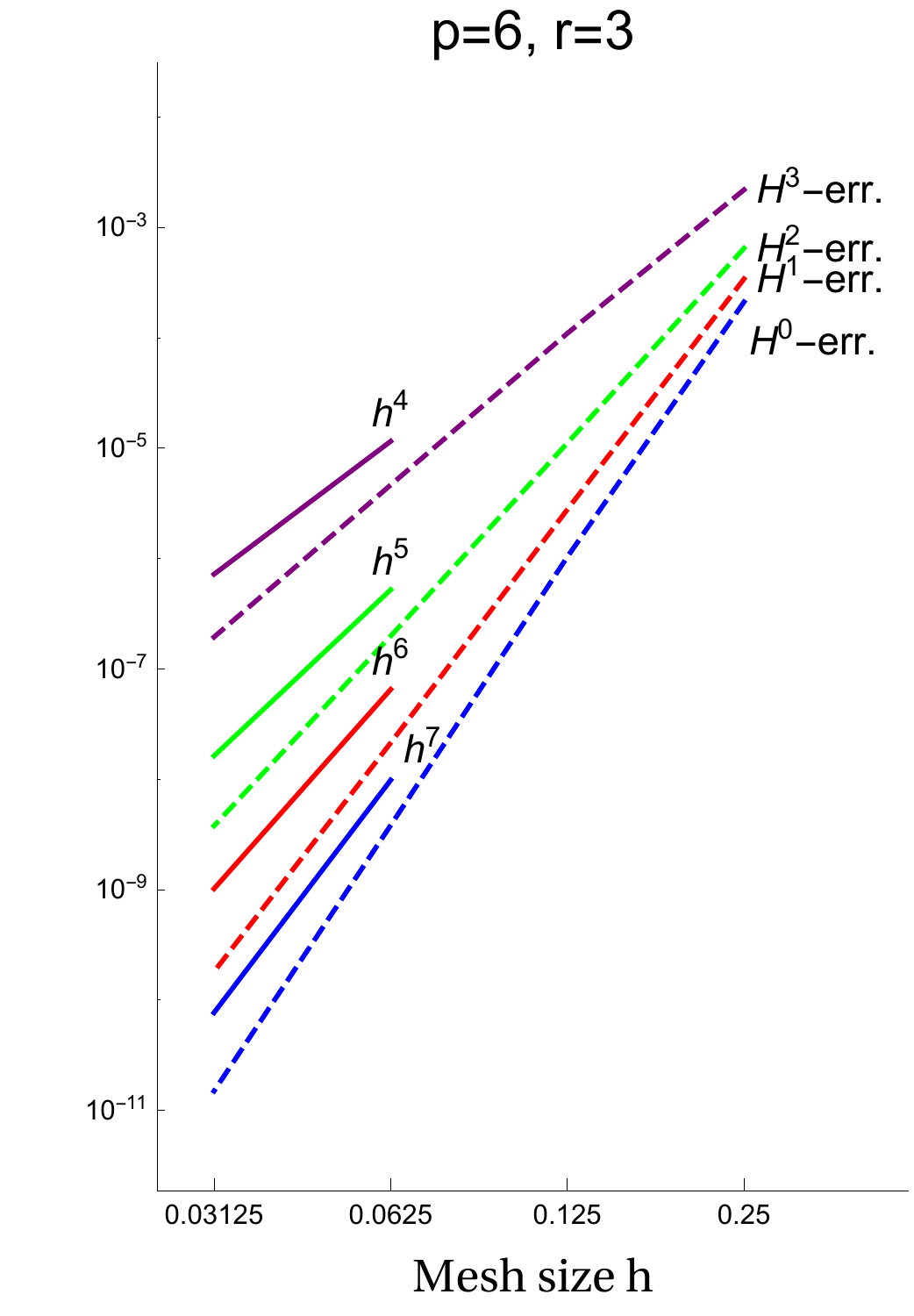}\\
\multicolumn{3}{c}{Relative $H^i$-errors, $i=0,\ldots,3$} \\
\includegraphics[width=4.9cm,clip]{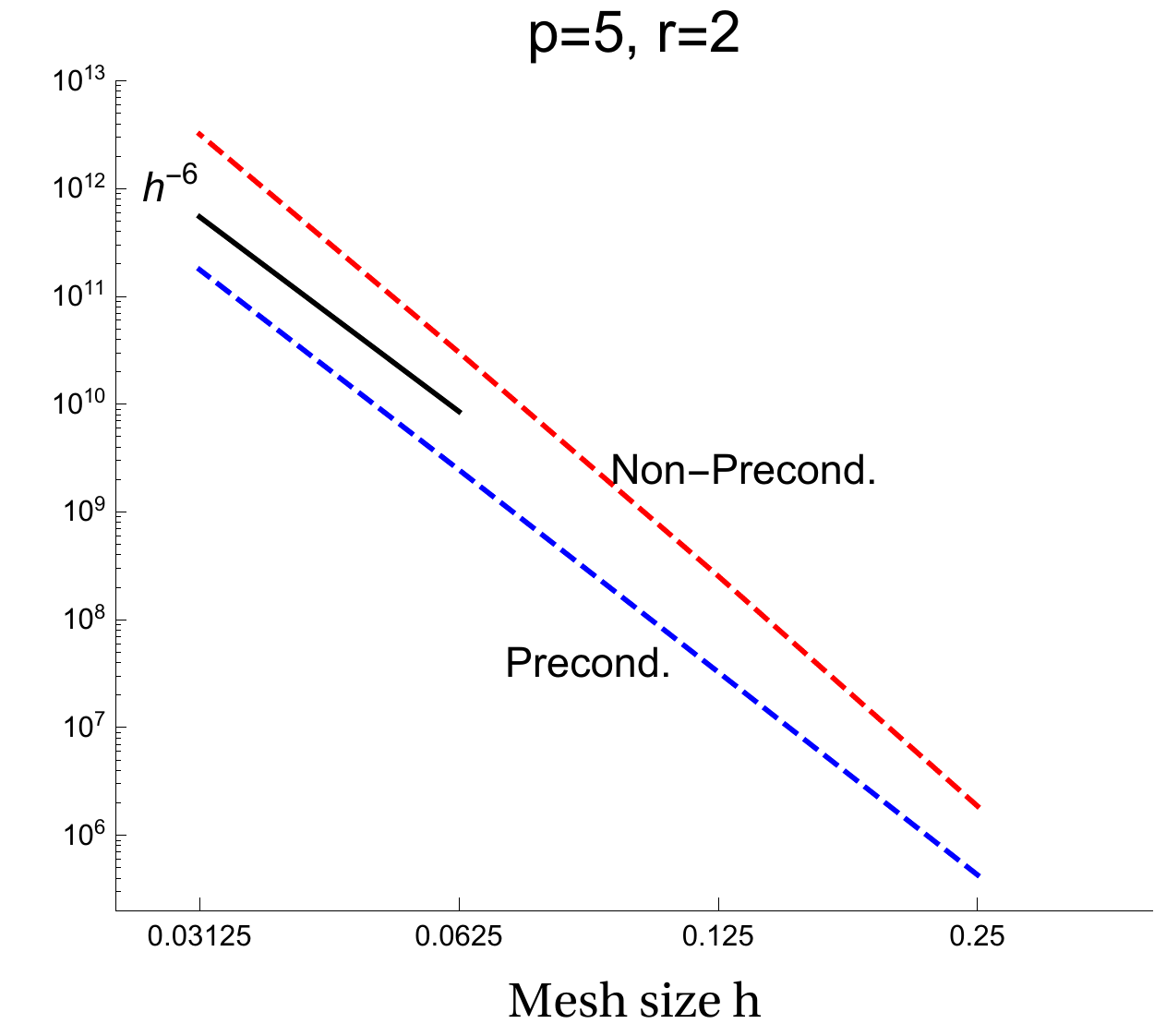} &
\includegraphics[width=4.9cm,clip]{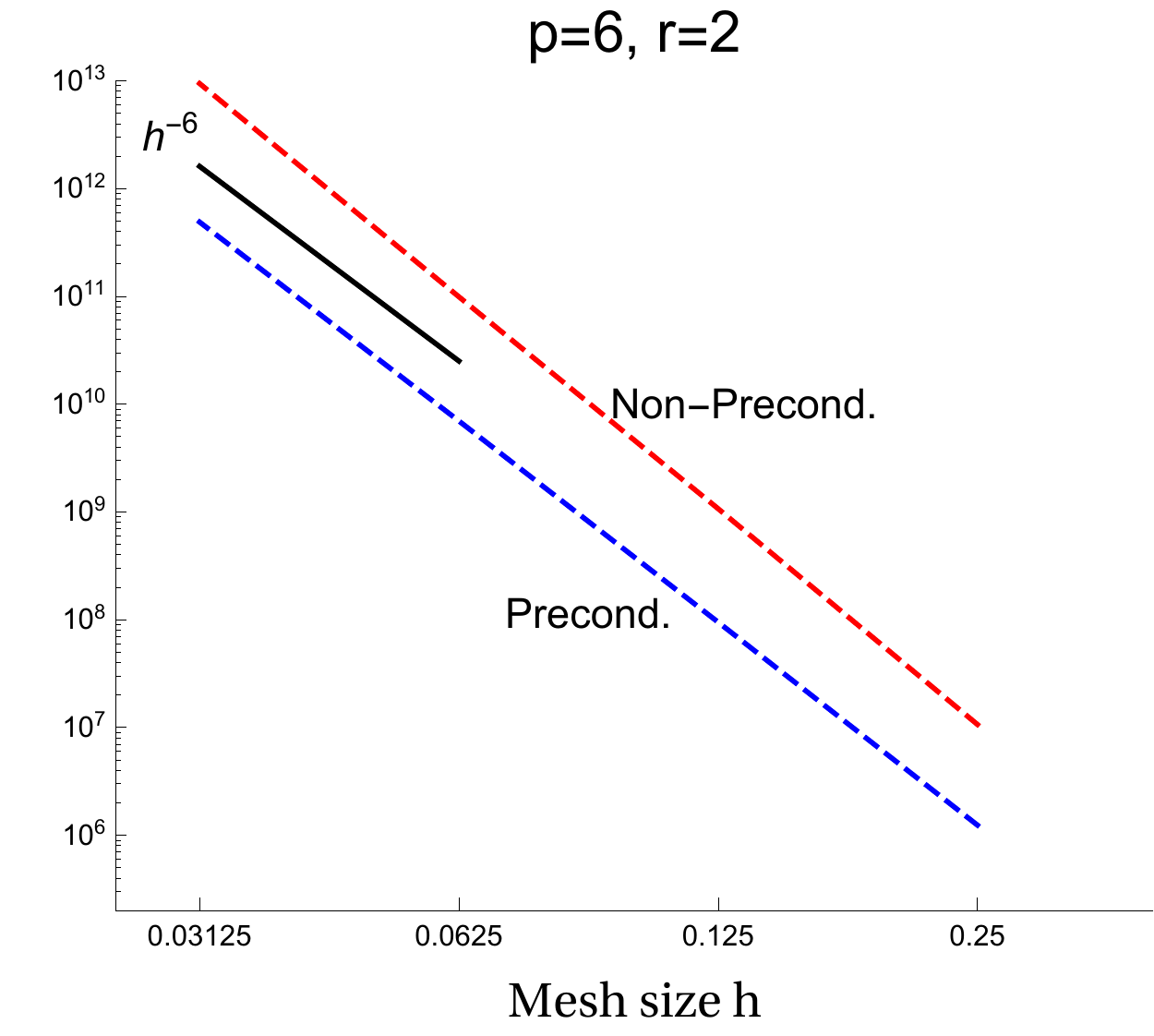} &
\includegraphics[width=4.9cm,clip]{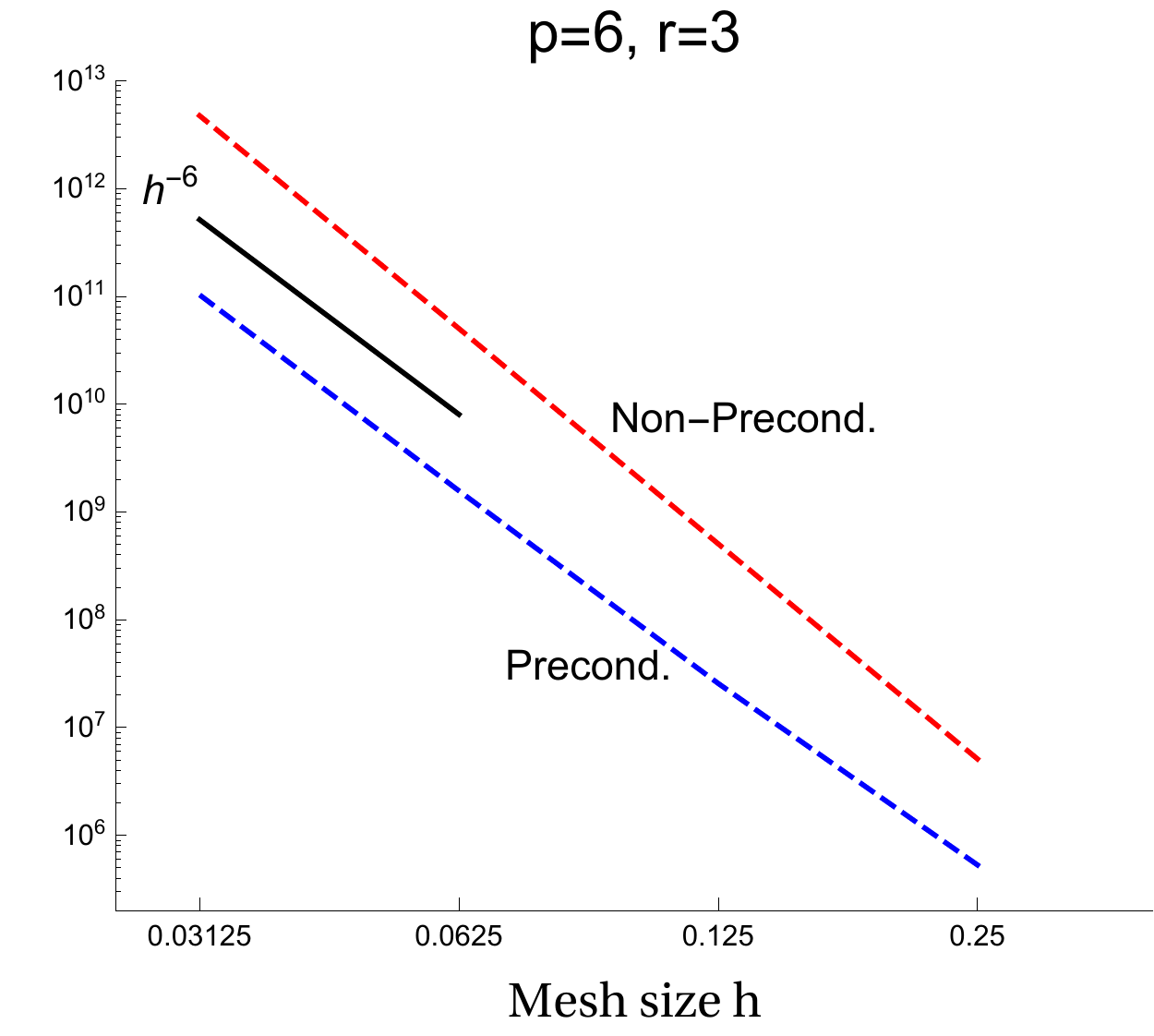}\\
\multicolumn{3}{c}{{\MK Condition numbers~$\kappa$ of preconditioned and non-preconditioned stiffness matrices~$S$}} 
\end{tabular}
\caption{Solving the triharmonic equation over the given multi-patch domain~$\Omega$ (cf. Example~\ref{ex:example2}).}
\label{fig:example2}
\end{figure}
\end{ex}

\section{Conclusion} \label{sec:conclusion}

We described a method for solving the triharmonic equation over bilinearly parameterized planar multi-patch domains. The presented approach is based on the concept of IGA and uses 
as discretization space~$\W$ a space of globally $C^{2}$-smooth isogeometric functions. The discretization space~$\W$ is the span of three different types of {\MK basis} functions 
called patch, edge and vertex functions. All of these functions possess a simple representation with small local supports, can be uniformly generated for all 
possible multi-patch configurations, {\VV and numerical examples indicate {\MK that} they are well-conditioned}. The numerical results obtained by solving the triharmonic equation over 
different bilinear multi-patch domains using $h$-refinement demonstrate the potential of our approach.  

The paper leaves several open questions which are worth to study. A first possible topic for future research could be the study of a priori error estimates for the triharmonic equation 
over multi-patch domains under $h$-refinement (similar to the ones in \cite{TaDe14} for single patch domains), and the theoretical investigation of the approximation properties of the 
discretization space~$\W$. {\MK Another topic could be the detailed study of the dimension of the space~$\W$ to get an explicit dimension formula. In \cite{KaVi17b}, the case of the 
entire $C^2$-smooth space~$\V$ was investigated, and the obtained formula there provides an upper bound for the dimension of~$\W$. Like in~\cite{KaVi17b} for the case of~$\V$, 
the dimension of the space~$\W$ is just the sum of the dimensions of the single subspaces (i.e. patch, edge and vertex subspaces). While the numbers of basis functions for the patch 
subspaces~$\mathcal{W}_{0h;\Omega^{(\ell)}}$ and for the edge subspaces~$\mathcal{W}_{0h;\Gamma^{(s)}}$ are explicitly given, the computation of the numbers of 
basis functions for the vertex subspaces~$\mathcal{W}_{0h;\bfm{v}^{(\rho)}}$ still deserves further investigation.

Moreover, one could consider} further $6$-th order PDEs for which the use of the discretization space~$\W$ could be suitable, since these problems require functions 
of $C^2$-smoothness. Possible examples are
the Kirchhoff plate model based on the Mindlin's gradient elasticity theory~\cite{Niiranen2016}, the Phase-field crystal equation \cite{BaDe15, Gomez2012} and 
the gradient-enhanced continuum damage model~\cite{GradientDamageModels}.
The extension of our approach to more general multi-patch domains, such as e.g., bilinear-like planar domains, shells or volumetric domains could be considered, too.

\paragraph*{\bf Acknowledgment}

{\MK The authors wish to thank the anonymous reviewers for their comments that helped to improve the paper.} 
V.~Vitrih was partially supported by the Slovenian Research Agency (research program P1-0285).
This support is gratefully acknowledged.

\appendix

\section{Proof of Proposition~\ref{thm:mainC4}} \label{app:proofProposition}

The proof will be mainly based on the concept of blossoming.
Let $q \in \mathcal{S}_{h}^{p,r}([0,1])$, and let $t_0^{p,r}, t_1^{p,r},\ldots,t_{2p+1+k(p-r)}^{p,r}$ be the corresponding knots of the spline space $\mathcal{S}_{h}^{p,r}([0,1])$.
Then there exists a unique function $\mathcal{Q}^{p,r}:{\MK \R^p \to \R}$, called the \emph{blossom} of $q$, which is symmetric, multi-affine and fulfill 
$\mathcal{Q}^{p,r}(\xi,\xi,\ldots,\xi) = q(\xi)$. These properties imply that the control points of $q$ can be written as
$$
d_\iota = \mathcal{Q}^{p,r}(t_{\iota+1}^{p,r},t_{\iota+2}^{p,r},\ldots,t_{\iota+p}^{p,r}), \quad \iota=0,1,\ldots,p+k(p-r).
$$
Blossoming is a simple approach, which can be used amongst others to perform knot insertion for a spline function or to multiply two spline functions. 
For more details about the concept of blossoming we refer to e.g.~\cite{GO03, Ra89, Se93}. 

The following two lemmas will be needed.
\begin{lem}  \label{lem:knotInsertion}
Let $N_j^{p,r+1}(\xi) = \sum_{ \iota=0}^{p+k(p-r)} \widetilde{d}_\iota N_\iota^{p,r}(\xi)$. Then
$ 
\widetilde{d}_\iota = 0
$
for $ \iota<j.$
\end{lem}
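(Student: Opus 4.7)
The idea is to prove the statement by iterated knot insertion. Since $\mathcal{S}_{h}^{p,r+1}\subset \mathcal{S}_{h}^{p,r}$ and the knot vector of $\mathcal{S}_{h}^{p,r}$ is obtained from that of $\mathcal{S}_{h}^{p,r+1}$ by inserting one extra copy of every inner knot $\tau_1,\tau_2,\ldots,\tau_k$, I would express $N_j^{p,r+1}$ in the refined basis by performing these $k$ knot insertions one at a time. Initially, the coefficient vector of $N_j^{p,r+1}$ with respect to the basis of $\mathcal{S}_{h}^{p,r+1}$ is $d_i=\delta_{i,j}$, so the set of indices carrying a nonzero coefficient is the singleton $\{j\}$. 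The goal is to show that this index set is monotonically ``shifted to the right'' under each insertion, so that after all $k$ steps no index below $j$ can carry a nonzero coefficient.

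The main technical step is the analysis of a single knot insertion. Suppose the nonzero coefficients of the current spline are supported on indices $\geq a$, and we insert a knot $\tau\in[t_\ell,t_{\ell+1})$. The Boehm knot-insertion formulae give
\[
d_i^{*}=d_i\;(i\leq \ell-p),\quad d_i^{*}=\alpha_i d_i+(1-\alpha_i)d_{i-1}\;(\ell-p+1\leq i\leq \ell),\quad d_i^{*}=d_{i-1}\;(i\geq \ell+1),
\]
with $\alpha_i=(\tau-t_i)/(t_{i+p}-t_i)$. A short case distinction according to the relative position of $a$ and $\ell$ shows that, in the refined indexing, the $d_i^{*}$ are again supported on indices $\geq a$: the new minimum equals $a$ if $a\leq \ell$ and equals $a+1$ if $a\geq \ell+1$. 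In particular, the minimum index of a nonzero coefficient never decreases.

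Iterating this monotonicity through the $k$ insertions of $\tau_1,\ldots,\tau_k$ and starting from initial minimum index $j$, one obtains that the coefficients $\widetilde{d}_\iota$ of $N_j^{p,r+1}$ in the basis of $\mathcal{S}_{h}^{p,r}$ must vanish for every $\iota<j$. The main obstacle I expect is keeping the index bookkeeping straight across successive refinements, since each insertion shifts the labels above $\ell$ upward by one and one must verify that the single-step lower bound survives this relabelling; once this is handled carefully, the argument reduces to a short induction on the number of inserted knots.
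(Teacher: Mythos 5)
Your argument is correct, but it takes a different route than the paper. You prove the statement by writing the refined knot vector of $\mathcal{S}_{h}^{p,r}([0,1])$ as the result of $k$ single knot insertions into the knot vector of $\mathcal{S}_{h}^{p,r+1}([0,1])$ (one extra copy of each inner knot $\tau_1,\ldots,\tau_k$), applying Boehm's insertion formula at each step, and running an induction showing that the minimal index carrying a nonzero coefficient never decreases; starting from the coefficient vector $\delta_{i,j}$ this yields $\widetilde d_\iota=0$ for $\iota<j$. The paper instead argues in one step via blossoming: it writes $\widetilde d_\iota=\mathcal{Q}^{p,r+1}(t_{\iota+1}^{p,r},\ldots,t_{\iota+p}^{p,r})$, compares the knot windows of the two spaces using $t_{\iota+p}^{p,r+1}\geq t_{\iota+p}^{p,r}$, and concludes that $\widetilde d_\iota$ is a linear combination of $d_m$ with $m\leq\iota$ only. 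Your approach is more elementary and self-contained (only the classical Boehm recursion is needed), at the price of the index bookkeeping you acknowledge; the paper's approach is shorter and, importantly, is uniform with its proof of the companion multiplication lemma (Lemma~\ref{lem:multiplication}), where plain knot insertion does not suffice and the polar-form machinery is reused. One small imprecision in your write-up: after an insertion the minimal nonzero index need not \emph{equal} $a$ (or $a+1$), since cancellations in the blended range $\ell-p+1\leq i\leq\ell$ could push it higher; what your case distinction actually delivers, and all the induction needs, is the lower bound that the support stays in indices $\geq a$.
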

\begin{pf}
Let $d_\iota$ be control points of $N_j^{p,r+1} \in \mathcal{S}_{h}^{p,r+1}([0,1])$, i.e., $d_\iota = \delta_{j,\iota}$. Moreover let $ \widetilde{d}_\iota $ denote control 
points of $N_j^{p,r+1}$ represented in the space $\mathcal{S}_{h}^{p,r}([0,1])$. Then (see e.g. \cite{GO03})
$$
  \widetilde{d}_\iota = \mathcal{Q}^{p,r+1} (t_{\iota+1}^{p,r},\ldots, t_{\iota+p}^{p,r}).
$$
Since 
$$
 {d}_\iota = \mathcal{Q}^{p,r+1} (t_{\iota+1}^{p,r+1},\ldots, t_{\iota+p}^{p,r+1}) 
 \quad {\rm and} \quad
 t_{\iota+p}^{p,r+1} \geq t_{\iota+p}^{p,r},
$$
it follows that 
$\widetilde{d}_\iota = \sum_{m \leq \iota} c_m d_m$, $c_m \in \R$, which implies $\widetilde{d}_\iota = 0$ for $\iota<j$.
\qed
\end{pf}
\begin{lem}  \label{lem:multiplication}
Let $(\omega_0 (1-\xi) + \omega_1 \xi) \, N_j^{p-1,r}(\xi) =  \sum_{ \iota=0}^{p+k(p-r)} \widehat{d}_\iota N_\iota^{p,r}(\xi)$. Then
$ 
\widehat{d}_\iota = 0
$
for $ \iota<j.$
\end{lem}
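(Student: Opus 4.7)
The approach follows the same blossoming strategy as the proof of Lemma~\ref{lem:knotInsertion}. First, I observe that the product
\[
q(\xi):=\bigl(\omega_0(1-\xi)+\omega_1\xi\bigr)\,N_j^{p-1,r}(\xi)
\]
is an element of $\mathcal{S}_{h}^{p,r}([0,1])$, since multiplication by a linear factor raises the polynomial degree by one while preserving the required $C^r$ regularity at the inner knots. Hence $q=\sum_{\iota}\widehat{d}_\iota N_\iota^{p,r}$, and the standard blossoming characterization of B-spline coefficients gives
\[
\widehat{d}_\iota=\mathcal{Q}^{p,r}\bigl(t_{\iota+1}^{p,r},\ldots,t_{\iota+p}^{p,r}\bigr),
\]
where $\mathcal{Q}^{p,r}$ denotes the degree-$p$ blossom of~$q$.

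Next, I would apply the product rule for blossoms. Writing $\mathcal{L}(u)=\omega_0(1-u)+\omega_1 u$ for the blossom of the linear factor and $\mathcal{N}_j^{p-1,r}$ for the blossom of $N_j^{p-1,r}$ viewed as a degree-$(p-1)$ spline, the symmetric-average formula for the blossom of a product yields
\[
\mathcal{Q}^{p,r}(u_1,\ldots,u_p)=\frac{1}{p}\sum_{i=1}^{p}\mathcal{L}(u_i)\,\mathcal{N}_j^{p-1,r}\bigl(u_1,\ldots,\widehat{u}_i,\ldots,u_p\bigr).
\]
Substituting $u_i=t_{\iota+i}^{p,r}$ expresses $\widehat{d}_\iota$ as a sum of $p$ terms, each one containing a blossom evaluation of $N_j^{p-1,r}$ at a $(p-1)$-tuple built from $p-1$ of the knots $t_{\iota+1}^{p,r},\ldots,t_{\iota+p}^{p,r}$.

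It then suffices to show that, whenever $\iota<j$, every such blossom value $\mathcal{N}_j^{p-1,r}\bigl(t_{\iota+1}^{p,r},\ldots,\widehat{t_{\iota+i}^{p,r}},\ldots,t_{\iota+p}^{p,r}\bigr)$ vanishes. Exactly as in the proof of Lemma~\ref{lem:knotInsertion}, each such value can be written as a finite linear combination $\sum_{m\leq\iota}c_m\,d_m$ of the native B-spline coefficients $d_m=\delta_{j,m}$ of $N_j^{p-1,r}$ in the basis $\{N_m^{p-1,r}\}$, where the summation index is controlled by the positions of the $(p-1)$-tuple within the knot vector of $\mathcal{S}_{h}^{p-1,r}$. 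For $\iota<j$ all of the coefficients $d_m$ appearing in this sum vanish, and so $\widehat{d}_\iota=0$. The main obstacle is the careful alignment of the two knot vectors, which have inner multiplicities $p-r$ versus $p-1-r$ and boundary multiplicities $p+1$ versus $p$; one has to verify that the effective index~$m$ produced by the de Boor-type recursion underlying the blossom evaluation never exceeds~$\iota$. This is a routine but delicate bookkeeping exercise, modelled on the corresponding step in the proof of Lemma~\ref{lem:knotInsertion}.
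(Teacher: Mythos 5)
Your proposal is correct and follows essentially the same route as the paper: the product (dual-functional) formula expressing $\widehat{d}_\iota$ through blossom values of $N_j^{p-1,r}$ at $(p-1)$-subtuples of $t_{\iota+1}^{p,r},\ldots,t_{\iota+p}^{p,r}$, followed by the observation that these values involve only the native coefficients $d_m$ with $m\leq\iota$. The ``bookkeeping'' you defer is settled in the paper by the single knot comparison $t_{\iota+p}^{p,r}\leq t_{\iota+p}^{p-1,r}$, exactly as in Lemma~\ref{lem:knotInsertion}.
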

\begin{pf}
Let $d_\iota$ denote control points of $ N_j^{p-1,r} \in \mathcal{S}_{h}^{p-1,r}([0,1])$, i.e., $d_\iota = \delta_{j,\iota}$,
and let $\mathcal{Q}^{p-1,r}$ denote its blossom. Moreover let $\widehat{d}_\iota$ denote the control points of $(\omega_0 (1-\xi) + \omega_1 \xi) \, N_j^{p-1,r}(\xi) 
\in \mathcal{S}_{h}^{p,r}([0,1])$.
Then (see e.g. \cite{GO03})
$$
  \widehat{d}_\iota = \frac{1}{p} \sum_{m=1}^p \mathcal{Q}^{p-1,r} (t_{\iota+1}^{p,r},\ldots,t_{\iota+m-1}^{p,r} ,t_{\iota+m+1}^{p,r},\ldots, t_{\iota+p}^{p,r}) 
  \left(\omega_0 (1-t_{\iota+m}^{p,r}) + \omega_1 t_{\iota+m}^{p,r} \right).
$$
We have to prove that $\widehat{d}_\iota = \sum_{n \leq \iota} c_n d_n$, $c_n \in \R$. Since
$d_\iota= \mathcal{Q}^{p-1,r} (t_{\iota+1}^{p-1,r},\ldots,t_{\iota+p}^{p-1,r})$ and $t_{\iota+p}^{p,r} \leq t_{\iota+p}^{p-1,r}$, it follows that
$ \mathcal{Q}^{p-1,r} (t_{\iota+1}^{p,r},\ldots,t_{\iota+m-1}^{p,r} ,t_{\iota+m+1}^{p,r},\ldots, t_{\iota+p}^{p,r}) $ does not involve $d_n$, $n>\iota$. Therefore $\widehat{d}_\iota $ is 
independent of $d_n$, $n>\iota$, implying  $\widehat{d}_\iota = 0$ for $ \iota<j$. \qed
\end{pf}

Proof of Proposition~\ref{thm:mainC4}:
Recall \eqref{eq:basisFunctionsGenericG}. We first observe that the first summation in \eqref{eq:defgljS} follows directly from \eqref{eq:Mj}. 
It remains to prove that the only nonzero coefficients $d_{\Gamma^{(s)};m,n}^{(\tau)}$ might be the ones with $n \geq \max(0, i+j-m)$ and $n \leq \min(d-1,d-n_i+j-i+m)$. 
 
The lower bound follows immediately by using
$$ 
\left( N_j^{p,r}(\xi) \right) '  = \sum_{\iota=j-1}^j d_\iota \, N_\iota^{p-1,r-1}(\xi), \quad 
 \left( N_j^{p,r}(\xi) \right) ''  = \sum_{\iota=j-2}^j c_\iota \, N_\iota^{p-2,r-2}(\xi),
 $$
 and by Lemma~\ref{lem:knotInsertion} and  Lemma~\ref{lem:multiplication}. The upper bound can be shown by first considering the function 
 $$
   \widehat{g}^{(\tau)}_{\Gamma^{(s)};i,j}(\xi_1^{(\tau)},\xi_2^{(\tau)}) = g^{(\tau)}_{\Gamma^{(s)};i,n_i-1-j} (\xi_1^{(\tau)},1-\xi_2^{(\tau)}),
$$
which possesses again the lower bound $n \geq \max(0, i+j-m)$ for possible nonzero coefficients $\widehat{d}_{\Gamma^{(s)};m,n}^{(\tau)}$. This directly implies the upper 
bound $n \leq \min(d-1,d-n_i+j-i+m)$ for possible nonzero coefficients $d_{\Gamma^{(s)};m,n}^{(\tau)}$ of the function~$g^{(\tau)}_{\Gamma^{(s)};i,j}$.
\qed

\end{document}